\theoremstyle{plain} 
\newtheorem{theorem}{Theorem}[section]
\newtheorem{lemma}[theorem]{Lemma}
\newtheorem{remark}[theorem]{Remark}
\newtheorem{corollary}[theorem]{Corollary}
\newtheorem{conjecture}[theorem]{Conjecture}
\newtheorem{definition}[theorem]{Definition}
\newtheorem{proposition}[theorem]{Proposition}
\newtheorem{condition}[theorem]{Condition}
\newcommand{\makeinvisible}[1]{}
\numberwithin{equation}{section}
\newcommand{\cc}{{\mathbb C}}
\newcommand{\pp}{{\mathbb P}}
\newcommand{\rr}{{\mathbb R}}
\newcommand{\gggg}{{\mathbb G}}
\newcommand{\aaa}{{\mathbb A}}
\newcommand{\Gm}{\gggg _m}
\newcommand{\Uu}{{\mathcal U}}
\newcommand{\stackmod}{{/\!\! /}}
\newcommand{\dualdel}{{{\mathbb D} \partial}}
\begin{document}

\author[C. Simpson]{Carlos Simpson}
\address{\newline CNRS, Laboratoire JAD, UMR 7351
\newline Universit\'e Nice Sophia Antipolis
\newline
06108 Nice Cedex 2\newline France}

\title[Dual boundary complex]{The dual boundary complex of the $SL_2$ character variety of 
a punctured sphere}

\subjclass[2010]{Primary 14D20; Secondary 14T05, 57M50}

\keywords{Character variety, Dual complex, Fenchel-Nielsen coordinates, Moduli stack, P=W conjecture}

\begin{abstract}
Suppose $C_1,\ldots , C_k$ are generic conjugacy classes in $SL_2(\cc )$.
Consider the character variety of local systems on $\pp^1-\{ y_1,\ldots , y_k\}$
whose monodromy transformations around the punctures $y_i$ are in the respective conjugacy
classes $C_i$. We show that the dual boundary complex of this character variety
is homotopy equivalent to a sphere of dimension $2(k-3)-1$. 
\end{abstract}

%\dedicatory{Dedicated to Vadim Schechtman}

\maketitle

\section{Introduction}
\label{sec-intro}

Given a smooth projective variety $X$, choose a normal crossings compactification 
$\overline{X}=X\cup D$ and define a simplicial set called the {\em dual boundary complex}
$\dualdel X$, containing the combinatorial information about multiple intersections of
divisor components of $D$. Danilov, Stepanov and Thuillier have shown 
that the homotopy type of $\dualdel X$ is
independent of the choice of compactification, and this structure has been the subject of
much study. 

We consider the case when $X={\rm M}_B(S; C_1,\ldots , C_k)$ is the character variety, of 
local systems on a punctured sphere $S\sim \pp ^1-\{ y_1,\ldots , y_k\}$ such that 
the conjugacy classes of the monodromies around the punctures are given by $C_1,\ldots , C_k$
respectively \cite{Letellier}. If these conjugacy classes satisfy a natural genericity condition 
then the character variety is a smooth affine variety. We prove that its
dual boundary complex is a sphere of the appropriate dimension (see Conjecture \ref{geopw}), 
for local systems of rank $2$.

\begin{theorem}
\label{main}
Suppose $C_1,\ldots , C_k$ are conjugacy classes in $SL_2(\cc )$
satisfying the genericity Condition \ref{verygen}. Then the dual boundary complex
of the character variety is homotopy equivalent to a sphere:
$$
\dualdel {\rm M}_B(S; C_1,\ldots , C_k) \sim S^{2(k-3)-1}.
$$
\end{theorem}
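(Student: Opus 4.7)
I would argue by induction on $k$. The base case $k = 3$ is trivial: under the genericity condition ${\rm M}_B$ is a single point, so $\dualdel {\rm M}_B = S^{-1} = \emptyset$. For $k = 4$ the character variety is the familiar smooth affine cubic surface in $\aaa^3$ (with coordinates given by the traces around the three simple loops separating pairs of punctures); compactifying in $\pp^3$ one finds a transverse triangle of three rational lines as boundary divisor, whose dual complex is $S^1 = S^{2(4-3)-1}$.

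For the inductive step I would choose a simple closed curve $\gamma \subset S$ separating two of the punctures, say $y_{k-1}, y_k$, from the remaining $k - 2$. The trace of monodromy around $\gamma$ defines a morphism $t_\gamma : {\rm M}_B \to \aaa^1$, and on the locus $t_\gamma \neq \pm 2$ the monodromy is regular semisimple, so gluing along $\gamma$ together with the Fenchel--Nielsen twist presents the preimage birationally as a family over $\aaa^1 \setminus \{\pm 2\}$ whose fiber at $t$ is
$$
{\rm M}_B(S'; C_1, \ldots, C_{k-2}, C_t) \times {\rm M}_B(P; C_t^{-1}, C_{k-1}, C_k) \times \cc^{*},
$$
where $S'$ is the $(k-1)$-punctured sphere on the larger side of $\gamma$, $P$ is the three-holed sphere on the smaller side, and the $\cc^{*}$ factor is the twist torus. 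Under genericity the middle factor is a single point, and the inductive hypothesis gives $\dualdel {\rm M}_B(S'; C_1, \ldots, C_{k-2}, C_t) \sim S^{2(k-4)-1}$. Combining factors via the join formula $\dualdel(X \times Y) \sim \dualdel(X) * \dualdel(Y)$ (with $\dualdel(\cc^*) = S^0$) and then accounting for the one-dimensional base direction $t$ should produce further suspensions delivering $S^{2(k-3)-1}$.

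The chief obstacle is that this fibration picture does not rigorously compute $\dualdel$: the map $t_\gamma$ is neither proper nor a fiber bundle, the Fenchel--Nielsen coordinates degenerate at $t_\gamma = \pm 2$, and the dual boundary complex is sensitive to all of the boundary strata, including those coming from length and twist degenerations that are not visible in the birational product description above. I expect the bulk of the work to consist in constructing a good normal crossings compactification adapted to the pants decomposition and analyzing how the boundary strata, indexed combinatorially by degenerations of the length and twist coordinates on each cut curve, fit together into a simplicial complex whose dimension and transversality pattern is controlled by the dual graph of the decomposition. The genericity condition should be essential for ruling out reducible representations on the cut pieces that would otherwise contribute extra strata of the wrong dimension. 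A cleaner formulation might avoid strict induction in favor of analyzing a maximal pants decomposition all at once and appealing to general results (for example tropical or toric models for $\dualdel$) that organize the contributions by the combinatorial structure of the decomposition.
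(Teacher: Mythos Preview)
Your instinct to use Fenchel--Nielsen coordinates and the join formula is correct, and your closing suggestion---to analyze a maximal pants decomposition all at once rather than by induction---is in fact exactly what the paper does. The inductive scheme as written, however, has a genuine gap that your last two paragraphs correctly diagnose but do not resolve.

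The core problem is that over the locus $t_\gamma \neq \pm 2$ you do not obtain a product: the fiber ${\rm M}_B(S'; C_1, \ldots, C_{k-2}, C_t)$ depends on $t$ through the conjugacy class $C_t$, so the join formula $\dualdel(X\times Y)\sim \dualdel X * \dualdel Y$ does not apply. Worse, for finitely many values of $t$ the sequence $(C_1,\ldots,C_{k-2},C_t)$ fails Kostov-genericity, so the inductive hypothesis is unavailable there. The paper explicitly warns that $\dualdel$ is sensitive to the difference between Zariski-local and \'etale-local triviality of a fibration; your family over $\aaa^1\setminus\{\pm 2\}$ is not even isotrivial.

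The paper's remedy has two ingredients you are missing, and neither is the explicit normal-crossings compactification you anticipate. First, a reduction lemma: if $Z\subset X$ is smooth and closed with $\dualdel Z\sim *$ (in particular whenever $Z\cong\aaa^1\times Y$), then $\dualdel X\sim\dualdel(X\setminus Z)$. This lets one peel off, as negligible strata, all the loci where some trace $t_i=\pm 2$ or where the restriction to some pair of pants is \emph{unstable} (a notion introduced precisely to make the rigidity ``local system on a $3$-holed sphere is determined by its three traces'' hold uniformly, even over the non-generic conjugacy classes). Second, on the remaining open set $M'$, an explicit matrix change of variables \emph{decouples} the Fenchel--Nielsen gluing equations, turning the coupled family into an honest product $M'\cong{\bf Q}^{k-3}$, where ${\bf Q}$ is a fixed two-dimensional variety (a twisted $\Gm$-bundle over $\aaa^1\setminus\{\pm 2\}$) with $\dualdel{\bf Q}\sim S^1$. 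Only then does the join formula apply.

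So your outline needs (i) a mechanism to discard bad loci without changing $\dualdel$, and (ii) a way to convert the coupled family of smaller moduli spaces into a genuine product. Both are nontrivial and constitute the substance of the paper; the induction on $k$ is never used.
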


This statement is a part of a general conjecture about the boundaries of moduli spaces of
local systems \cite{KNPS}. The conjecture says that the
dual boundary complex of the character variety or ``Betti moduli space'' should be a
sphere, and that it should furthermore be naturally identified with the sphere at infinity
in the ``Dolbeault'' or Hitchin moduli space of Higgs bundles. 
We will discuss this topic in further detail in Section \ref{rel-hitch} at the
end of the paper. 

The case $k=4$ of our theorem is a consequence of the
Fricke-Klein expression for the character variety, which was indeed the motivation for the
conjecture. The case $k=5$ of Theorem \ref{main} has been proven by Komyo \cite{Komyo}. 

\subsection{Strategy of the proof}
Here is the strategy of our proof. We first notice that it is 
possible to make some reductions, based on the following observation
(Lemma \ref{negligeable}): 
if $Z\subset X$ is a smooth closed subvariety of a smooth quasiprojective variety, such that 
the boundary dual complex is contractible $\dualdel Z\sim \ast$, 
then the natural map $\dualdel X\rightarrow \dualdel (X-Z)$ is a homotopy
equivalence. This allows us to remove some subvarieties which will be ``negligeable'' for the
dual boundary complex. The main criterion is that
if $Z= \aaa^1\times Y$ then $\dualdel Z\sim \ast$
(Corollary \ref{affine}). Together, these two statements allow us successively to remove
a whole sequence of subvarieties (Proposition \ref{decomp}). 

The main technique is to express the moduli space 
${\rm M}_B(S; C_1,\ldots , C_k)$ in terms of a decomposition of $S$ into a sequence of
``pairs of pants'' $S_i$ which are three-holed spheres. 
The decomposition is obtained by cutting $S$ along $(k-3)$ circles denoted $\rho _i$. 
In each $S_i$, there is one
boundary circle corresponding to a loop $\xi _i$ 
around the puncture $y_i$, and two other boundary circles $\rho _{i-1}$ and $\rho _i$ 
along which $S$ was cut. At the start and the end of the sequence,
two of the circles correspond to $\xi _1,\xi _2$ or $\xi _{k-1},\xi _k$ and only one to a cut.
One may say that $\rho _1$ and $\rho _{k-1}$ are confused with the original
boundary circles $\xi _1$ and $\xi _k$ respectively.  

We would like to use this decomposition
to express a local system $V$ on $S$ as being the result of ``glueing'' together
local systems $V|_{S_i}$ on each of the pieces, glueing across the circles $\rho _i$. 
A basic intuition, which one learns from the
elementary theory of classical hypergeometric functions, is that a local system of
rank $2$ on a three-holed sphere is determined by the conjugacy classes of its three
monodromy transformations. This is true generically, but one needs to take some care in
degenerate cases involving potentially irreducible local systems, as will be discussed below.

The conjugacy classes of the monodromy transformations around $\rho _i$ are determined, except in
some special cases, by their traces. The special cases are when the traces are $2$ or $-2$. 

If we assume for the moment the uniqueness of $V|_{S_i}$ as a function of $C_i$ and the
traces $t_{i-1}$ and $t_i$ of  the monodromies around $\rho _{i-1}$ and $\rho _i$ 
respectively, then the local system $V$ is roughly speaking determined by specifying
the values of these traces $t_2,\ldots , t_{k-2}$, plus the glueing parameters.
The glueing parameters should respect the monodromy transformations, and are defined modulo
central scalars, so each parameter is an element of $\Gm$. In this rough picture then,
the moduli space could be viewed as fibering over $(\aaa^1)^{k-3}$ with fibers
$\Gm ^{k-3}$. 

The resulting coordinates are classically known as {\em Fenchel-Nielsen coordinates}.
Originally introduced to parametrize $PGL_2(\rr )$ local systems corresponding to 
points in Teichm\"uller space, they have been extended to the complex character variety
by Tan \cite{Tan}. 

In the above discussion we have taken several shortcuts. We assumed that the traces $t_i$ determined
the  monodromy representations, and in saying that the glueing parameters would
be in $\Gm$ we implicitly assumed that these monodromy transformations were diagonal with
distinct eigenvalues. These conditions correspond to saying $t_i \neq 2,-2$. 

We also assumed that the local system $V|_{S_i}$ was determined by $C_i$,
$t_{i-1}$ and $t_i$. This is not in general true if it can be reducible, which is to say
if there is a non-genericity relation between the conjugacy classes. The locus where that
happens is somewhat difficult to specify explicity since there are several possible choices
of non-genericity relation (the different choices of $\epsilon _i$ in Condition \ref{Kgen}).
We would therefore like a good way of obtaining such a rigidity even over the non-generic
cases. 

Such a property is provided by the notion of {\em stability}. One may envision assigning parabolic
weights to the two eigenvalues of $C_i$ and assigning parabolic weights zero over $\rho _j$.
The parabolic weights induce a notion of {\em stable} local system over $S_i$. But in fact
we don't need
to discuss parabolic weights themselves since the notion of stability can also be defined
directly: a local system $V_i$ on $S_i$ is {\em unstable} if it admits a rank $1$ subsystem
$L$ such that the monodromy matrix in $C_i$ acts on $L$ by $c_i^{-1}$ (a previously chosen
one of the two 
eigenvalues of $C_i$). It is {\em stable} otherwise. Now, it becomes true that a stable
local system is uniquely determined by $C_i$,
$t_{i-1}$ and $t_i$. This will be the basis of our calculations in Section \ref{sec-main},
see Corollary \ref{hyperge}. 

The first phase of our proof is to use the possibility for reductions given by
Proposition \ref{decomp} to reduce to the case of the open subset 
$$
M'\subset {\rm M}_B(S; C_1,\ldots , C_k)
$$
consisting of local systems $V$ such that $t_i\in \aaa^1-\{ 2,-2\}$ and 
such that $V|_{S_i}$ is stable. In order to make these reductions, we show in Sections
\ref{sec-splitting} and \ref{sec-decomp-unstable} that the strata where some $t_i$ is $2$ or $-2$, or
where some 
$V|_{S_i}$ is unstable, have a structure of product with $\aaa^1$, hence by Lemma \ref{affine}
these strata are negligeable in the sense that Lemma \ref{negligeable} applies. 

For the open set $M'$, there is still one more difficulty. The glueing parameters depend {\em a priori}
on all of the traces, so we don't immediately get a decomposition of $M'$ as a product. A calculation
with matrices and a change of coordinates allow us to remedy this and we show in
Theorem \ref{fenchelnielsen} that $M' \cong {\bf Q}^{k-3}$ where ${\bf Q}$ is a space of choices of
a trace $t$ together with a point $[p,q]$ in a copy of $\Gm$. 

It turns out that this family of multiplicative groups over $\aaa^1-\{ 2,-2\}$ is twisted:
the two endpoints of the fibers $\Gm$ get permuted as $t$ goes around $2$ and $-2$. This
twisting property is what makes it so that 
$$
\dualdel {\bf Q} \sim S^1,
$$
and therefore by \cite[Lemma 6.2]{Payne}, 
$\dualdel ({\bf Q}^{k-3})\sim S^{2(k-3)-1}$. This calculates $\dualdel M'$ and hence
also $\dualdel {\rm M}_B(S; C_1,\ldots , C_k)$ to prove Theorem \ref{main}. 

We should consider the open subset $M'$ as the natural domain of definition of the Fenchel-Nielsen
coordinate system, and the components in the
expression $M' \cong {\bf Q}^{k-3}$ are the Fenchel-Nielsen coordinates. 

\subsection{Relation with other work}

What we are doing here is closely related to a number of things. Firstly, as pointed out above,
our calculation relies on the Fenchel-Nielsen coordinate system coming from a pair of pants decomposition,
and this is a well-known technique. Our only contribution is to keep track
of the things which must be removed from the domain of definition, and of the precise form of the
coordinate system, so as to be able to conclude the structure up to homotopy of the dual boundary complex. 

A few references about Fenchel-Nielsen coordinates include 
\cite{FenchelNielsen} \cite{Goldman} \cite{ParkerPlatis} \cite{Wolpert},
and for the complex case Tan's paper \cite{Tan}.
Nekrasov, Rosly and Shatashvili's work on bend parameters 
\cite{NekrasovRoslyShatashvili} involves similar coordinates
and is related to the context of polygon spaces \cite{GodinhoMandini}.
The work of Hollands and Neitzke \cite{HollandsNeitzke} 
gives a comparison between Fenchel-Nielsen and Fock-Goncharov coordinates
within the theory of spectral networks \cite{GMN}. 
Jeffrey and Weitsman \cite{JeffreyWeitsman} consider what is the effect of
a decomposition, in arbitrary genus, on the space of representations into a compact group.  
Recently, Kabaya uses these decompositions to give algebraic coordinate systems 
and furthermore goes on to study the mapping class group action \cite{Kabaya}.
These are only a few
elements of a vast literature. 

Conjecture \ref{geopw} relating the dual boundary complex of the character variety and
the sphere at infinity of the Hitchin moduli space, should be viewed as a geometric statement
reflecting the first weight-graded piece of the  $P=W$ conjecture of de Cataldo, Hausel and 
Migliorini \cite{deCataldoHauselMigliorini} \cite{Hausel}. This will be discussed a little bit more in 
Section \ref{rel-hitch} but should also be the subject of further study. 

Komyo gave the first proof of the theorem that the dual boundary complex was a sphere,
for rank $2$ systems on the projective line minus $5$ points \cite{Komyo}. He did this by
constructing an explicit compactification and writing down the dual complex. 
This provides more information than what we get in our proof of Theorem \ref{main},
because we use a large number of reduction steps iteratively replacing the character variety
by smaller open subsets.

I first heard from Mark Gross in Miami in 2012 about a statement, which he attributed
to Kontsevich, that if
$X$ is a log-Calabi-Yau variety (meaning that it has a compactification $\overline{X}=X\cup D$
such that $K_{\overline{X}}+D$ is trivial), then $\dualdel X$ should be a sphere. 
Sam Payne points out that
this idea may be traced back at least to \cite[Remark 4]{KontsevichSoibelmanHMS} 
in the situation of a degeneration. 

Gross also stated that this property
should apply to character varieties, that is to say that some or all character varieties
should be log-CY. That has apparently been known folklorically in many instances cf 
\cite{GHKK}. 

Recently, much progress
has been made. Notably, Koll\'ar and Xu have 
proven that the dual boundary of a log-CY variety is a sphere 
in dimension $4$, and they go a long way towards the proof in general
\cite{KollarXu}. 
They note that the correct statement, for general log-CY varieties, seems
to be that $\dualdel X$ should be a quotient of a sphere by a finite group. 
In our situation of character varieties, part of the statement of
Conjecture \ref{geopw} posits that this
finite quotienting doesn't happen. This is supported by our theorem, but it is hard to
say what should be expected in general. 

De Fernex, Koll\'ar and Xu have introduced a refined dual boundary complex \cite{deFernexKollarXu}, 
which is expected to be a sphere in the category of PL manifolds. That is much stronger than
just the statement about homotopy equivalence. See also Nicaise and Xu \cite{NicaiseXu}.
For character varieties, as well as for
more general cluster varieties and quiver moduli spaces, the Kontsevich-Soibelman wallcrossing
picture could be expected to be closely related to this PL sphere, more precisely the Kontsevich-Soibelman 
chambers in the base of the Hitchin fibration should to correspond to cells 
in the PL sphere. One may witness this phenomenon by explicit calculation for $SL_2$ character varieties
of the projective line minus $4$ points, under certain special choices of conjugacy classes where the character
variety is the Cayley cubic. 

Recently, Gross, Hacking, Keel and Kontsevich \cite{GHKK} building on work of Gross, Hacking and Keel
\cite{GHK}, have given an explicit combinatorial description of a boundary divisor for log-Calabi-Yau 
cluster varieties. Their description depends on a choice
of toroidal cluster coordinate patches, and the combinatorics involve 
toric geometry. It should in principle be possible to conclude from their construction that 
$\dualdel {\rm M}_B(S; C_1,\ldots , C_k)$ is a sphere, as is mentioned in 
\cite[Remark 9.12]{GHKK}. Their technique, based in essence on the Fock-Goncharov coordinate systems, 
should probably lead to a proof
in much greater generality than our Theorem \ref{main}.

\subsection{Varying the conjugacy classes}

In the present paper, we have been considering the conjugacy classes $C_1,\ldots , C_k$ as
fixed. As Deligne pointed out, it is certainly an interesting next question to ask what happens as they vary. 
Nakajima discussed it long ago \cite{Nakajima}. This has many different aspects
and it would go beyond our current scope to enter into a detailed discussion. 

I would just like to point out that the natural domain on which everything is defined
is the space of choices of $C_1,\ldots , C_k$ which satisfy the Kostov genericity Condition \ref{Kgen}.
This is an open subset of $\Gm ^k$, the complement of a divisor $K$ whose components
are defined by multiplicative monomial equalities. It therefore looks like a natural
multiplicative analogue
of the hyperplane arrangement complements which enter into the theory of higher dimensional
hypergeometric functions \cite{SchechtmanVarchenko}. The variation with parameters of the
moduli spaces ${\rm M}_B(S; C_1,\ldots , C_k)$ leads, at the very least, to some variations
of mixed Hodge structure over $\Gm ^k-K$ which undoubtedly have interesting properties.

\subsection{Acknowledgements}
I would like to thank the Fund for Mathematics at the Institute
for Advanced Study for support. 
This work was also supported in part by the 
ANR grant 933R03/13ANR002SRAR (Tofigrou). 

It is a great pleasure to thank L. Katzarkov, A. Noll and P. Pandit for all of the discussions
surrounding our recent projects, which have provided a major motivation for the present work. 
I would specially like to thank D. Halpern-Leistner, L. Migliorini and S. Payne for some
very helpful and productive discussions about this work at the Institute for Advanced Study.
They have notably been suggesting several approaches to making the reasoning more canonical,
and we hope to be able to say more about that in the future.  
I would also like to thank G. Brunerie, G. Gousin, A. Ducros, M. Gross, J. Koll\'ar, A. Komyo,
F. Loray, N. Nekrasov, M.-H. Saito, J. Weitsman, and R. Wentworth 
for interesting and informative 
discussions and suggestions. 

\subsection{Dedication}
It is a great honor to dedicate this work to Vadim Schechtman. Vadim's interests and work
have illuminated many
aspects of the intricate interplay between topology and geometry
in the de Rham theory of algebraic varieties. His work on hypergeometric functions
\cite{SchechtmanVarchenko} motivates our consideration of moduli spaces of local systems on 
higher-dimensional varieties. His work with Hinich on dga's in several papers such as
\cite{HinichSchechtman} was one of the first instances of 
homotopy methods for algebraic varieties. His 
many works on
the chiral de Rham complex have motivated wide developments in the theory of 
${\mathcal D}$-modules and local systems. The ideas generated by these threads have been suffused
throughout my own research for a long time.

\section{Dual boundary complexes}
\label{sec-dualdel}

Suppose $X$ is a smooth quasiprojective variety over $\cc$. By resolution of singularities
we may choose a normal crossings compactification $X\subset \overline{X}$ whose complementary
divisor $D:= \overline{X}-X$ has simple normal crossings. In fact, we may assume that
it satisfies a condition which might be called {\em very simple normal crossings}: if $D=\bigcup _{i=1}^m
D_i$ is the decomposition into irreducible components, then we can ask that any multiple
intersection $D_{i_1}\cap \cdots \cap D_{i_k}$ be either empty or connected. If the compactification
satisfies this condition, then we obtain a simplicial complex denoted $\dualdel X$,
the dual complex ${\mathbb D}(D)$ of the divisor $D$,
defined as follows: there are $m$ vertices $e_1,\ldots , e_m$
of $\dualdel X$, in one-to-one correspondence
with the irreducible components $D_1,\ldots , D_m$ of $D$; and a simplex spanned by $e_{i_1},\ldots , e_{i_k}$
is contained in $\dualdel X$ if and only if $D_{i_1}\cap \cdots \cap D_{i_k}$ is nonempty. 

This defines a simplicial complex, which could be considered as a simplicial set, but which for
the present purposes we shall identify with its topological realization which is the union of
the span of those simplicies in $\rr^m$ with $e_i$ being the standard basis vectors. 

The simplicial complex $\dualdel X$ goes under several different terminologies and notations. We shall
call it the {\em dual boundary complex} of $X$. It contains the purely combinatorial information about
the divisor compactifying $X$. The main theorem about it is due to Danilov \cite{Danilov}:

\begin{theorem}[Danilov]
The homotopy type of $\dualdel X$ is independent of the choice of compactification.
\end{theorem}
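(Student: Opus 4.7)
My plan is to invoke the Weak Factorization Theorem of Abramovich--Karu--Matsuki--W\l odarczyk to reduce the statement to a single elementary step. Given two very simple normal crossings compactifications $\overline{X}_1 = X \cup D^{(1)}$ and $\overline{X}_2 = X \cup D^{(2)}$, the birational map $\overline{X}_1 \dashrightarrow \overline{X}_2$ extending the identity on $X$ factors as a finite chain of blow-ups and blow-downs along smooth centers disjoint from $X$ and having normal crossings with the boundary divisors. Since homotopy equivalence is an equivalence relation, it suffices to show that a single such blow-up leaves $\dualdel X$ unchanged up to homotopy. After a further refinement, one can also assume the condition that all nonempty multiple intersections of boundary components are connected is preserved (or work with the natural $\Delta$-complex in which one allows a simplex for each connected component of an intersection, which is well known to give the same homotopy type).

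Let $\pi : \overline{X}' \to \overline{X}$ be the blow-up along a smooth irreducible center $Z \subset D$ of codimension $c \geq 2$, with $Z$ meeting $D = \bigcup_i D_i$ in normal crossings. Then there is an index set $I = \{i_1,\ldots,i_r\}$ with $Z \subset D_{i_1} \cap \cdots \cap D_{i_r}$ and $Z$ transverse to the remaining $D_j$. The new boundary divisor is $D' = E \cup \bigcup_i \widetilde{D}_i$ where $E = \pi^{-1}(Z)$ is the exceptional divisor and $\widetilde{D}_i$ is the strict transform. Combinatorially, $\dualdel X'$ is obtained from $\dualdel X$ by adding one new vertex $[E]$, keeping all the old simplices (since the strata $D_{i_1} \cap \cdots \cap D_{i_s}$ not contracted by $\pi$ correspond bijectively to $\widetilde{D}_{i_1} \cap \cdots \cap \widetilde{D}_{i_s}$), and for every simplex $\sigma$ of $\dualdel X$ such that the corresponding stratum meets $Z$, adjoining a new simplex $\sigma \ast [E]$.

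The key observation is that the set of simplices whose stratum meets $Z$ is precisely the closed star $\mathrm{St}(\sigma_I)$ in $\dualdel X$ of the simplex $\sigma_I$ spanned by $\{e_{i_1},\ldots,e_{i_r}\}$ (here using that $Z \subset D_{i_1}\cap\cdots\cap D_{i_r}$ and $Z$ is transverse to the others, so a stratum meets $Z$ iff it is contained in the closure of a stratum of $D_I$ on which $Z$ cuts out a nonempty piece). This star is a cone with apex $\sigma_I$, hence contractible, and the blow-up replaces it by the join $\mathrm{St}(\sigma_I) \ast [E]$, which is again a cone and remains contractible. Since we are gluing along the old star, which is contractible, the inclusion $\dualdel X \hookrightarrow \dualdel X'$ is a homotopy equivalence by a standard pushout argument.

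The main obstacle will be verifying the combinatorial description of $\dualdel X'$ carefully: one has to check that a $k$-fold intersection of components of $D'$ that involves $E$ is nonempty exactly when the corresponding intersection of components of $D$ meets $Z$ in the expected codimension, and that connectedness is preserved so that one really has simplices and not disjoint unions of them. This is a local calculation in coordinates adapted to $Z$ and $D$, where the blow-up looks like the standard blow-up of a coordinate linear subspace inside $\mathbb{A}^n$ with $D$ a union of coordinate hyperplanes, and one reads off which intersections survive. Once that local picture is established, the global homotopy equivalence follows formally.
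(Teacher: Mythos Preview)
The paper does not give its own proof of this theorem; it is stated with attribution to Danilov \cite{Danilov}, and the surrounding text merely points to the subsequent literature (Stepanov, Thuillier, Payne, de Fernex--Koll\'ar--Xu). So there is nothing to compare your argument against directly. Your approach via the Weak Factorization Theorem is one of the standard modern proofs, essentially the one carried out by Stepanov and by Payne \cite{Payne}; the paper also alludes to Thuillier's quite different proof \cite{Thuillier}, which identifies $\dualdel X$ with the homotopy type of the Berkovich boundary of $X$ over the trivially valued ground field, an intrinsic object, so that independence of the compactification is automatic.

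Your outline is sound, but two points deserve more care than you give them. First, your identification of the subcomplex $K$ of simplices whose stratum meets $Z$ with the closed star $\mathrm{St}(\sigma_I)$ is not correct in general: if $Z$ is a proper closed subvariety of $D_I$, there may be simplices $\tau \supset I$ with $D_\tau \cap Z = \emptyset$. What is true, and suffices, is that $\sigma \in K$ implies $\sigma \cup I \in K$, from which one deduces $K = \sigma_I \ast L$ for a suitable subcomplex $L$ on the vertices outside $I$, so $K$ is a cone and contractible. Second, your claim that ``all old simplices survive'' fails precisely when $Z$ equals a closed stratum $D_I$ (so $c=r$): then $\widetilde{D}_{i_1}\cap\cdots\cap\widetilde{D}_{i_r}=\emptyset$ in the blow-up, and the operation on dual complexes is a stellar subdivision rather than a cone attachment. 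That case is of course even easier (the realizations are homeomorphic), but it should be separated out. You do flag the local verification as the main obstacle, and indeed that is where both of these issues live.
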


The papers of Stepanov \cite{Stepanov1} \cite{Stepanov2}, concerning the analogous question
for singularities, started a lot of renewed activity. 
Following these, a very instructive proof, which I first learned about from A. Ducros, 
was given by Thuillier \cite{Thuillier}. He interpreted
the homotopy type of $\dualdel X$ as being equivalent to the homotopy type of the {\em Berkovich boundary} of
$X$, namely the set of points in the Berkovich analytic space
\cite{Berkovich} 
associated to $X$ (over the trivially valued ground field), 
which are valuations centered at points outside of $X$ itself. 

Further refinements were given by Payne \cite{Payne} and de Fernex, Koll\'ar and Xu 
\cite{deFernexKollarXu}. Payne showed that
the simple homotopy type of $\dualdel X$ was invariant, and proved
several properties crucial to our arguments below. De Fernex, Koll\'ar and Xu defined in
some cases a special choice of compactification leading to a boundary complex $\dualdel X$ whose PL
homeomorphism type is invariant. Nicaise and Xu show in parallel, in the case of a degeneration
at least, that the
essential skeleton of the Berkovich space is a pseudo-manifold \cite{NicaiseXu}. 
Manon considers an embedding of ``outer space'' for character varieties, 
into the Berkovich boundary \cite{Manon}. 
These refined versions provide very interesting objects of study
but for the present paper we just use the homotopy type of $\dualdel X$. 

Our goal will be to calculate the homotopy type of the dual boundary complex of some character varieties.
To this end, we describe here a few important reduction steps allowing us to modify a variety
while keeping its dual boundary complex the same. 

One should be fairly careful when manipulating these
objects, as some seemingly insignificant changes in a variety could result in quite different boundary
complexes. For example, in the case of an isotrivial fibration it isn't enough to know the homotopy types
of the base and the fiber---essentially, the fibration should be 
locally trivial in the Zariski rather than etale 
topology in order for that kind of reasoning to work.
The space ${\bf Q}$ to be considered at the end of the paper provides an example
of this phenomenon. 

In a similar vein, I don't know of a good notion of dual boundary complex for an Artin stack. 
It is possible that a theory of Berkovich stacks could remedy this problem, but that seems 
difficult. Payne has suggested, in answer to the problem of etale isotrivial fibrations, 
to look at an equivariant notion of isotrivial fibration 
which could give a natural group action on a dual boundary complex such that the
quotient would be meaningful. This type of theory might give an alternate approach to some of
our problems.

Let us get now to the basic properties of dual boundary complexes. 
The first step is to note that if $U\subset X$ is an open subset of a smooth quasiprojective variety, then
we obtain a map $\dualdel X \rightarrow \dualdel U$. 

\begin{lemma}[Payne]
If $X$ is an irreducible smooth projective variety and $Z\rightarrow X$ is obtained by blowing up a smooth
center, then it induces a homotopy equivalence on dual boundary complexes $\dualdel Z \sim \dualdel X$.
\end{lemma}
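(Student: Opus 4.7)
The plan is to interpret the statement as asserting that if $\overline{X}$ is a smooth projective variety equipped with an SNC boundary divisor $D=\bigcup_{i=1}^m D_i$ used to compute $\dualdel X$, and $\pi\colon \overline{Z}\to\overline{X}$ is the blow-up along a smooth irreducible subvariety $W$ having simple normal crossings with $D$, then the resulting SNC compactification $\overline{Z}\supset \pi^{-1}(D)\cup E$ yields the same dual complex up to homotopy. After reindexing I would assume $W$ is contained exactly in the components $D_1,\ldots,D_r$ (and in no other $D_j$), and set $c=\mathrm{codim}_{\overline{X}}(W)\geq r$, with $E\subset\overline{Z}$ the exceptional divisor and $\widetilde{D}_i$ the strict transforms.

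The first technical step is to work out the incidence pattern of the new boundary $\widetilde{D}_1\cup\cdots\cup\widetilde{D}_m\cup E$ by a local SNC-coordinate computation. One verifies: for $j>r$, $\widetilde{D}_j\cap E=\emptyset$, and intersections involving only indices $>r$ are preserved from those in $D$; for any multi-index $I\subset\{1,\ldots,m\}$, the intersection $\bigcap_{i\in I}\widetilde{D}_i$ is nonempty whenever $\bigcap_{i\in I} D_i$ was nonempty and not entirely contained in $W$. The crucial point is that $E$ is a $\pp^{c-1}$-bundle over $W$, and for any nonempty $J\subset\{1,\ldots,r\}$ the intersection $E\cap\bigcap_{i\in J}\widetilde{D}_i$ is a $\pp^{c-|J|}$-bundle over $W$, hence nonempty as soon as $|J|\leq c$.

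I would then translate this into a combinatorial comparison of $\dualdel X$ and $\dualdel Z$. The new vertex $e_E$ is joined in $\dualdel Z$ to a simplex spanned by any nonempty $J\subset\{e_1,\ldots,e_r\}$, so that the simplices containing $e_E$ form the join $e_E \ast \sigma$, where $\sigma$ is the $(r-1)$-simplex on $\{e_1,\ldots,e_r\}$. All simplices of $\dualdel X$ not having all of $\{e_1,\ldots,e_r\}$ among their vertices persist unchanged in $\dualdel Z$. The attached piece $e_E\ast\sigma$ is contractible and meets $\dualdel X$ in the contractible subcomplex $\sigma$, so the inclusion $\dualdel X\hookrightarrow\dualdel Z$ is a homotopy equivalence by the standard glueing lemma for cofibrations.

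The main subtlety, and the step I expect to require the most care, is the borderline case in which $W$ is exactly a connected component of the deepest stratum $D_1\cap\cdots\cap D_r$. In that case the full intersection $\widetilde{D}_1\cap\cdots\cap\widetilde{D}_r$ is supported entirely on $E$ after blow-up, so the simplex $\sigma$ disappears from $\dualdel Z$ and is replaced by the cone $e_E \ast \partial\sigma$, i.e.\ a stellar subdivision. One must check that this subdivision is a homotopy (indeed a simple homotopy) equivalence; this is the content of Payne's combinatorial argument in \cite{Payne}, and it is also transparent from Thuillier's identification \cite{Thuillier} of $\dualdel X$ with the Berkovich skeleton of $X$, which depends only on $X$ and not on the SNC compactification, so any admissible blow-up of the compactification preserves the homotopy type.
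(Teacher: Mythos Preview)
The paper does not prove this lemma: its entire proof reads ``See \cite{Payne}, where more generally boundary complexes of singular varieties are considered but we only need the smooth case.'' So there is nothing to compare at the level of argument; your proposal is already far more detailed than what the paper supplies.

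That said, your sketch is essentially the argument one finds in Payne's paper, and it is correct. Your reinterpretation of the statement (blowing up a compactification $\overline X$ along a smooth center $W$ having normal crossings with the boundary $D$) is the right one for the paper's purposes---the hypothesis ``$X$ projective'' in the statement is at best imprecise, since $\dualdel X$ would then be empty. The local SNC computation you describe, the identification of the new star of $e_E$ with a cone over a simplex $\sigma$, and the handling of the borderline case $W=D_1\cap\cdots\cap D_r$ via stellar subdivision, are exactly the ingredients Payne uses. One small point worth making explicit in your write-up: in the generic case you also need to note that any simplex of $\dualdel X$ containing $\sigma$ (i.e.\ with vertex set $J\supsetneq\{1,\ldots,r\}$ and $D_J\neq\emptyset$) survives in $\dualdel Z$, and that the cone $e_E\ast\sigma$ may also acquire higher simplices $e_E\ast\tau$ for $\tau\supset J'$ with $J'\subset\{1,\ldots,r\}$ when $W\cap D_\tau\neq\emptyset$; these still assemble into a cone over a contractible subcomplex, so the conclusion stands.
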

\begin{proof}
See \cite{Payne}, where more generally boundary complexes of singular varieties
are considered but we only need the smooth case.
\end{proof}

It follows from this lemma that
if $U\subset X$ is an open subset of a smooth quasiprojective variety, it induces a natural
map of boundary complexes $\partial X\rightarrow \partial U$.
Indeed, for that we may assume by resolving singularities and applying the previous lemma,
that $U$ is the complement of a divisor $B\subset X$, and furthermore there is a very simple
normal crossings compactification $\overline{X}=X\cup D$ such that $B\cup D$ also has very simple
normal crossings. Then $\dualdel U$ is the dual complex of the divisor $B\cup D$, which
contains  $\dualdel X$, the dual complex of $D$, as a subcomplex. 

Following up on this idea, here is our main reduction lemma:

\begin{lemma}
\label{negligeable}
Suppose $U\subset X$ is an open subset of an irreducible smooth quasiprojective variety, obtained by removing
a smooth irreducible closed subvariety of smaller dimension $Y=X-U\subset X$. Suppose that $\dualdel Y \sim \ast$ is contractible.
Then the map $\dualdel X\rightarrow \dualdel U$ is a homotopy equivalence. 
\end{lemma}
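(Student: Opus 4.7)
The plan is to construct a single blowup that simultaneously exhibits normal crossings compactifications of $X$, $U$ and $Y$, and then to read the dual complex of $U$ as the dual complex of $X$ with an extra cone attached along $\dualdel Y$.

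First, I would take a very-SNC compactification $\overline{X} = X \cup D$ and let $\overline{Y} \subset \overline{X}$ denote the closure of $Y$. By Hironaka together with further blowups having smooth centers inside $D$ (which do not alter $X$ and, by Payne's lemma above, preserve the homotopy type of $\dualdel X$), I may assume that $\overline{Y}$ is smooth and that $\overline{Y} \cup D$ has very simple normal crossings. In particular, $\overline{Y}$ is then a smooth projective compactification of $Y$ with very-SNC boundary $\partial Y = \overline{Y} \cap D$, so $\dualdel Y$ is realised as the dual complex of $\partial Y$. Next I form the blowup $\pi : \hat{X} \to \overline{X}$ along $\overline{Y}$, with irreducible exceptional divisor $E$. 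The transversality of $\overline{Y}$ with $D$ ensures that the divisor $\tilde{D} \cup E$ is very SNC in $\hat{X}$, and set-theoretically $\hat{X} - (\tilde{D} \cup E) = X - Y = U$, so $\hat{X}$ provides a very-SNC compactification of $U$ whose dual complex computes $\dualdel U$.

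Next I would read off the combinatorics of $\dualdel U$. Because $\pi$ induces a birational equivalence between each multiple intersection of components of $\tilde{D}$ and the corresponding intersection of components of $D$, the subcomplex of $\dualdel U$ spanned by the $\tilde{D}$-vertices is canonically identified with $\dualdel X$, realising the natural inclusion $\dualdel X \hookrightarrow \dualdel U$. A simplex of $\dualdel U$ containing the extra vertex $e_E$ is specified by a non-empty intersection $E \cap \tilde{D}_{i_1} \cap \cdots \cap \tilde{D}_{i_r}$, which is a projective bundle over $\overline{Y} \cap D_{i_1} \cap \cdots \cap D_{i_r}$ and hence is non-empty precisely when the corresponding intersection in $\partial Y$ is. Thus the link of $e_E$ is canonically identified with $\dualdel Y$, which sits inside $\dualdel X$ as a subcomplex, and the star of $e_E$ is the cone $C(\dualdel Y)$ attached to $\dualdel X$ along $\dualdel Y$. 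In other words $\dualdel U = \dualdel X \cup_{\dualdel Y} C(\dualdel Y)$.

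Finally I would combine these. In this pushout the quotient maps $\dualdel X \to \dualdel X / \dualdel Y$ and $\dualdel U \to \dualdel U / C(\dualdel Y) = \dualdel X / \dualdel Y$ are compatible with the inclusion $\dualdel X \hookrightarrow \dualdel U$ in an obvious triangle. Since $\dualdel Y$ and $C(\dualdel Y)$ are both contractible and the relevant inclusions are cofibrations of CW pairs, both quotient maps are homotopy equivalences, so the two-out-of-three property yields that $\dualdel X \hookrightarrow \dualdel U$ is too. The main technical obstacle is really only Step 1: simultaneously arranging that $\overline{Y}$ be smooth and that $\overline{Y} \cup D$ be very SNC. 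This is achieved by applying Hironaka's resolution to $\overline{Y}$ relative to $D$ and invoking Payne's invariance of $\dualdel X$ under SNC blowups; once this is in place the remainder of the argument is formal.
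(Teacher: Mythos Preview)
Your proof is correct and follows essentially the same strategy as the paper's: both turn $Y$ into a divisor by a single blowup, arrange a simultaneous very-SNC compactification of $X$ and $U$, and identify $\dualdel U$ as $\dualdel X$ with a cone over (something homotopy equivalent to) $\dualdel Y$ attached at the new vertex. The only difference is the order of operations---you compactify first, arrange $\overline{Y}\cup D$ to be SNC, and then blow up $\overline{Y}$, so that the link of the exceptional vertex is literally $\dualdel Y$; the paper instead blows up $Y$ inside $X$ first and then compactifies, which forces it to prove a separate projective-bundle lemma ($\dualdel \pp(V)\sim \dualdel Y$) to identify the link of the new vertex with $\dualdel Y$, a step your ordering absorbs directly into the link computation.
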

\begin{proof}
Let $X^{{\rm Bl}Y}$ be obtained by blowing up $Y$. From the previous lemma, 
$\dualdel X^{{\rm Bl}Y} \sim \dualdel X$.
Let ${\rm Bl}(Y)\subset X^{{\rm Bl}Y}$ be the inverse image of $Y$. 
It is an irreducible smooth divisor, and $U$ is also the complement of
this divisor in $X^{{\rm Bl}Y}$. By resolution
of singularities we may choose a compactification 
$\overline{X^{{\rm Bl}Y}}$ such that the boundary divisor $D$, plus the
closure $B:= \overline{{\rm Bl}(Y)}$,
form a very simple normal crossings divisor. This combined divisor is therefore a boundary
divisor for $U$, so
$$
\dualdel U \sim {\mathbb D}( D \cup B ).
$$
Now this bigger dual complex ${\mathbb D}( D \cup B)$ has one
more vertex than ${\mathbb D}(D)$, corresponding to the irreducible component
$B$. The star of this vertex is the cone over $\dualdel {\rm Bl}(Y) = {\mathbb D}(B\cap D)$.
The cone is attached to ${\mathbb D}(D)$ via  its base ${\mathbb D}(B\cap D)$,  to give
${\mathbb D}(B\cup D)$. 

We would like to show that $\dualdel {\rm Bl}(Y)\sim \ast$. The first step is to notice that 
${\rm Bl}(Y)\rightarrow Y$ is the projective space bundle associated to the vector bundle 
$N_{Y/X}$ over $Y$. 

We claim in general that if $V$ is a vector bundle over a smooth
quasiprojective variety $Y$, then $\dualdel ({\mathbb P}(V))\sim \dualdel (Y)$. 
The proof of this claim is that there exists a normal crossings compactification 
$\overline{Y}$ of $Y$ such that the vector bundle $V$ extends to a vector bundle
on $\overline{Y}$. That may be seen by choosing a surjection from the dual
of a direct sum of very ample line bundles to $V$, getting $V$ as the pullback of
a tautological bundle under a map from $Y$ to a Grassmanian. The compactification 
may be chosen so that the map to the Grassmanian extends. 
We obtain a compactification of ${\mathbb P}(V)$ wherein the boundary divisor is
a projective space bundle over the boundary divisor of $Y$, and with these choices
$\dualdel {\mathbb P}(V)=\dualdel Y$. It follows from Danilov's theorem that
for any other choice, there is a homotopy equivalence.

Back to our situation where ${\rm Bl}(Y)={\mathbb P}(N_{Y/X})$, and assuming that
$\dualdel Y\sim \ast$, we conclude that $\dualdel {\rm Bl}(Y)\sim \ast$ too. 
Therfore the dual complex ${\mathbb D}(B\cap D)$ is contractible.

Now $\dualdel U = {\mathbb D} (B\cup D)$ is obtained by attaching to ${\mathbb D}(D)$
the cone over ${\mathbb D}(B\cap D)$. As we have seen above ${\mathbb D}(B\cap D)$
is contractible, so coning it off doesn't change the homotopy type. This shows that
the map
$$
\dualdel X = {\mathbb D}(D) \rightarrow {\mathbb D} (B\cup D)= \dualdel U
$$
is a homotopy equivalence. 
\end{proof}

In order to use this reduction, we need a criterion for the condition $\dualdel Y \sim \ast$.
Note first the following general property of compatibility with products.

\begin{lemma}[Payne]
\label{join}
Suppose $X$ and $Y$ are smooth quasiprojective varieties. Then $\dualdel (X\times Y)$ is the
{\em join} of $\dualdel (X)$ and $\dualdel (Y)$, in other words we have a homotopy cocartesian
diagram of spaces
$$
\begin{array}{ccc}
\dualdel (X)\times \dualdel (Y) & \rightarrow & \dualdel (Y) \\
\downarrow && \downarrow \\
\dualdel (X) & \rightarrow & \dualdel (X\times Y)\,  .
\end{array} 
$$
\end{lemma}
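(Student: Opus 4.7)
The plan is to exhibit an explicit normal crossings compactification of $X \times Y$ for which the dual complex is manifestly the simplicial join of $\dualdel X$ and $\dualdel Y$, then invoke Danilov's theorem.

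First I would choose very simple normal crossings compactifications $\overline{X} = X \cup D_X$ and $\overline{Y} = Y \cup D_Y$, with irreducible decompositions $D_X = \bigcup_{i=1}^m A_i$ and $D_Y = \bigcup_{j=1}^n B_j$. The product $\overline{X}\times\overline{Y}$ is a smooth compactification of $X\times Y$ whose boundary is the divisor
$$D \;=\; \bigcup_{i=1}^m (A_i \times \overline{Y}) \;\cup\; \bigcup_{j=1}^n (\overline{X} \times B_j).$$
Each piece is an irreducible smooth divisor, and the simple normal crossings property for $D_X$ and $D_Y$ implies the analogous property for $D$ (this can be checked locally in coordinates adapted to the two factors). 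For the connectedness of multiple intersections, note that any intersection of components of $D$ has the form
$$\bigl(A_{i_1}\cap\cdots\cap A_{i_k}\bigr) \;\times\; \bigl(B_{j_1}\cap\cdots\cap B_{j_l}\bigr),$$
which is either empty (if one factor is empty) or connected as a product of two connected sets. Hence this product compactification is itself very simple normal crossings.

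Second, I would read off the dual complex directly. The vertices are $\{e_i\}_{i=1}^m \cup \{f_j\}_{j=1}^n$, and the display above shows that $\{e_{i_1},\ldots,e_{i_k}, f_{j_1},\ldots, f_{j_l}\}$ spans a simplex of $\dualdel(X\times Y)$ if and only if $\{e_{i_1},\ldots,e_{i_k}\}$ spans a simplex of $\dualdel X$ \emph{and} $\{f_{j_1},\ldots,f_{j_l}\}$ spans a simplex of $\dualdel Y$, where we allow either collection to be empty. This is exactly the combinatorial description of the simplicial join $\dualdel X * \dualdel Y$.

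Third, the topological realization of the join of two nonempty CW complexes $A$ and $B$ sits in a standard homotopy cocartesian square with corners $A\times B$, $A$, $B$, $A*B$ (the two maps out of $A\times B$ being the projections, and the pushout being the join by definition of the mapping cylinder construction). Applied to $A = \dualdel X$ and $B = \dualdel Y$, this gives precisely the square in the statement. By Danilov's theorem the homotopy type of $\dualdel(X\times Y)$ is independent of the chosen compactification, so this proves the lemma.

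The main obstacle, though minor, is the verification of the very simple normal crossings condition for the product compactification; the connectedness of multiple intersections is automatic from the product structure, but one must be slightly careful about the case where $\dualdel X$ or $\dualdel Y$ is empty (i.e.\ $X$ or $Y$ is already projective), in which case the square degenerates correctly since the join of any space with the empty set is that space itself.
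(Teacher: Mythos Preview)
Your argument is correct and is essentially the standard proof: take product compactifications, observe that the boundary divisor is very simple normal crossings with intersections given by products, and read off the simplicial join. The paper itself does not give a proof at all; it simply cites \cite[Lemma 6.2]{Payne}. So you have supplied what the paper omits, and your approach is the natural one (and presumably close to what appears in Payne's paper).
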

\begin{proof}
This is \cite[Lemma 6.2]{Payne}. 
\end{proof}

\begin{corollary}
\label{affine}
Suppose $Y$ is a smooth quasiprojective variety. Then $\dualdel (\aaa ^1\times Y) \sim \ast$. 
\end{corollary}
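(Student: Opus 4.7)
The plan is to deduce this from Lemma \ref{join} after computing $\dualdel(\aaa^1)$ by hand. The affine line has the obvious smooth compactification $\aaa^1\subset \pp^1$ with complementary divisor the single smooth reduced point $\{\infty\}$. This is vacuously a very simple normal crossings divisor with one irreducible component and no higher intersections, so $\dualdel(\aaa^1)$ is the simplicial complex with a single vertex and no higher-dimensional simplices. In particular $\dualdel(\aaa^1)\sim\ast$.

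Next I would apply Lemma \ref{join} with $X=\aaa^1$, which gives the homotopy cocartesian square
$$
\begin{array}{ccc}
\dualdel(\aaa^1)\times \dualdel(Y) & \rightarrow & \dualdel(Y) \\
\downarrow && \downarrow \\
\dualdel(\aaa^1) & \rightarrow & \dualdel(\aaa^1\times Y).
\end{array}
$$
Since $\dualdel(\aaa^1)$ is a point, the top horizontal map is (up to homotopy) the identity of $\dualdel(Y)$, while the left vertical map collapses $\dualdel(Y)$ onto a point. The resulting homotopy pushout is then the cone on $\dualdel(Y)$, which is contractible. Hence $\dualdel(\aaa^1\times Y)\sim \ast$.

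There is essentially no obstacle here beyond writing down the computation of $\dualdel(\aaa^1)$ and recognizing ``join with a point = cone'' as a standard fact about homotopy pushouts. The statement could also be phrased as a direct instance of the general principle that joining any space with a contractible space yields a contractible space, so no further input is needed beyond Lemma \ref{join}.
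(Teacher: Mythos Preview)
Your proof is correct and follows essentially the same approach as the paper: compute $\dualdel(\aaa^1)\sim\ast$ directly from the compactification $\pp^1$, then apply Lemma~\ref{join} to conclude that the homotopy pushout is contractible. The paper's version is slightly terser (it does not spell out the $\pp^1$ compactification or the cone interpretation), but the argument is the same.
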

\begin{proof}
Setting $X:= \aaa^1$ in the previous lemma, we have $\dualdel (X)\sim \ast$, so in the
homotopy cocartesian diagram the top arrow is an equivalence and the left vertical arrow
is the projection to $\ast$; therefore the homotopy pushout is also $\ast$. 
\end{proof}

\begin{proposition}
\label{decomp}
Suppose $U\subset X$ is a nonempty open subset of a smooth irreducible 
quasiprojective variety, and suppose
the complement $Z:= X-U$ has a decomposition into locally closed subsets $Z_j$ such that
$Z_j\cong \aaa^1\times Y_j$. Suppose that this decomposition can be ordered into
a stratification, that is to say there is a total order on the indices such that
$\bigcup _{j\leq a}Z_j$ is closed for any $a$. Then $\dualdel (X)\sim \dualdel (U)$. 
\end{proposition}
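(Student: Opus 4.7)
The plan is to prove this by induction on the number $n$ of strata $Z_1, \ldots, Z_n$ in the stratification, peeling them off one at a time starting from the smallest. The base case $n=0$ gives $U = X$ and is trivial. For the inductive step, note that by hypothesis $Z_1 = \bigcup_{j \leq 1} Z_j$ is closed in $X$. Since $U$ is a nonempty open subset of the irreducible variety $X$, the closed subset $Z_1 \subset X$ has strictly smaller dimension. The product structure $Z_1 \cong \aaa^1 \times Y_1$ allows us to apply Corollary \ref{affine} to conclude that $\dualdel Z_1 \sim \ast$, and then Lemma \ref{negligeable} gives a homotopy equivalence $\dualdel X \sim \dualdel(X - Z_1)$. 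The variety $X - Z_1$ is again smooth irreducible quasiprojective (irreducibility follows from $X$ being irreducible and $Z_1$ being a proper closed subset), it contains $U$ as an open subset, and its complement $(X - Z_1) - U = Z_2 \cup \cdots \cup Z_n$ inherits a stratification of length $n-1$ satisfying the hypotheses (with the induced total order, $\bigcup_{1 < j \leq a} Z_j$ is the intersection of the closed set $\bigcup_{j \leq a} Z_j$ with the open set $X - Z_1$). Applying the inductive hypothesis yields $\dualdel(X - Z_1) \sim \dualdel U$, and composition finishes the argument.

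To apply Lemma \ref{negligeable} at each step we need $Z_1$ to be \emph{smooth and irreducible}. Irreducibility and smoothness can be arranged by refining the stratification before running the induction: replace each $Z_j$ by the decomposition of its smooth locus into connected components (and iterate on the singular locus, which is a proper closed subset of $Z_j$ of strictly smaller dimension). Since $Z_j \cong \aaa^1 \times Y_j$, each piece in this refinement is still a product with $\aaa^1$ (namely $\aaa^1$ times a locally closed subset of $Y_j$), so the product-with-affine-line hypothesis is preserved. The refined pieces can be totally ordered compatibly with dimension and with the original stratification order, preserving the property that initial segments are closed.

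The main subtlety, rather than an obstacle, is verifying that all the hypotheses of Lemma \ref{negligeable} really persist along the induction: one must check (i) that $X - Z_1$ remains smooth quasiprojective and irreducible, (ii) that the restriction of the stratification to $(X - Z_1) - U$ still consists of pieces of the form $\aaa^1 \times (\text{something smooth irreducible})$, and (iii) that the ordering condition is inherited. All three are routine once the preliminary refinement above is carried out. No further geometric input is needed beyond Corollary \ref{affine} and Lemma \ref{negligeable} themselves.
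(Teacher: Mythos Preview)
Your proof is correct and follows essentially the same strategy as the paper's: induction on the number of strata, peeling off the lowest closed stratum using Corollary~\ref{affine} and Lemma~\ref{negligeable}, with a refinement to ensure smoothness and irreducibility of each piece. The only organizational difference is that the paper first proves the smooth case by induction and then, in the general case, refines the single bottom stratum $Z_0$ into smooth pieces $\aaa^1\times Y_{0,i}$ and invokes the already-proven smooth case, whereas you refine the entire stratification up front before starting the induction; these are equivalent bookkeeping choices.
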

\begin{proof}
We first prove the proposition under the additional hypothesis that the $Y_j$ are smooth. 
Proceed by induction on the number of pieces in the decomposition. Let $Z_0$ be the
lowest piece in the ordering. The ordering hypothesis says that $Z_0$ is closed in $X$.
Let $X':= X-Z_0$. Now $U\subset X'$ is the complement
of a subset $Z' = \bigcup _{j>0}Z_j$ decomposing in the same way, with a smaller number of
pieces, so by induction we know that 
$\dualdel (X')\sim \dualdel (U)$. 

By hypothesis $Z_0\cong \aaa ^1\times Y_0$.
Lemma \ref{affine} tells us that $\dualdel (Z_0)\sim \ast$ and now Lemma \ref{negligeable}
tells us that $\dualdel (X)\sim \dualdel (X')$, so $\dualdel (X)\sim \dualdel (U)$. 
This completes the proof of the proposition under the hypothesis that $Y_j$ are smooth. 

Now we prove the proposition in general. Proceed as in the first paragraph of the proof with
the same notations: by induction we may assume that $\dualdel (X')\sim \dualdel (U)$
where $X'=X-Z_0$ such that $Z_0$ is closed and isomorphic to $\aaa^1\times Y_0$. 
Choose a totally ordered stratification of $Y_0$ by smooth locally closed subvarieties
$Y_{0,i}$. Set $Z_{0,i}:= \aaa^1\times Y_{0,i}$. This collection of subvarieties
of $X$ now satisfies the hypotheses of the proposition and the pieces are smooth.
Their union is $Z_0$ and its complement in $X$ is the open subset $X'$. Thus,
the first case of the proposition treated above tells us that $\dualdel (X)\sim \dualdel (X')$. 
It follows that $\dualdel (X) \sim \dualdel (U)$, completing the proof. 
\end{proof}

\noindent
{\em Caution:} A simple example shows that the condition of ordering, in the statement of the
propostion, is necessary. Suppose  $X$ is a smooth
projective surface containing two projective lines $D_1,D_2\subset X$ such that their intersection 
$D_1\cap D_2 = \{ p_1,p_2\}$ consists of two distinct points. Then we could look at 
$Z_1=D_1-\{ p_1\}$ and $Z_2=D_2-\{ p_2\}$. Both $Z_1$ and $Z_2$ are affine lines.
Setting $U:= X-(D_1\cup D_2)=X-(Z_1\cup Z_2)$ we get an open set which is the complement
of a subset $Z=Z_1\sqcup Z_2$ decomposing into two affine lines; but $\dualdel X=\emptyset$ whereas 
$\dualdel U \sim S^1$. 

\section{Hybrid moduli stacks of local systems}
\label{sec-hybrid}

The moduli space of local systems is different from the moduli stack, even at the points
corresponding to irreducible local systems. Indeed, the open substack of the moduli stack
parametrizing irreducible $GL_r$-local systems is a $\Gm$-gerbe over the corresponding open subset
of the moduli space. Even by considering $SL_r$-local systems we can only reduce this to being a
$\mu _r$-gerbe. 

However, it is usual and convenient to consider the moduli space instead. 
In this section, we mention a construction allowing to define what we 
call\footnote{This is not new but I don't remember where it is from, so
no claim is made to originality.}
a {\em hybrid moduli stack}
in which the central action is divided out, making it so that for irreducible points it is the
same as the moduli space. 

Our initial discussion will use some simple $2$-stacks, however the
reader wishing to avoid these is may refer to Proposition \ref{alternate} 
which gives an equivalent definition in
more concrete terms. 

Consider a reductive group $G$ with center $Z$. 
The fibration sequence of $1$-stacks
$$
BZ \rightarrow BG \rightarrow B(G/Z)
$$
may be transformed into the cartesian diagram
\begin{equation}
\label{kzdiag}
\begin{array}{ccc}
BG &\rightarrow & B(G/Z) \\
\downarrow & & \downarrow\\
\ast & \rightarrow &  K(Z,2)
\end{array}
\end{equation}
of Artin $2$-stacks on the site ${\rm Aff}^{ft, et}_{\cc}$ of affine schemes 
of finite type over $\cc$ with the etale topology. 
 
Suppose now that $S$ is a space or higher stack. Then we may consider the relative
mapping stack
$$
M(S, G):= \underline{Hom}(S, B(G/Z)/K(Z,2)) \rightarrow K(Z,2).
$$
It may be defined as the fiber product forming the middle arrow in the following
diagram where both squares are cartesian:  
$$
\begin{array}{ccccc}
\underline{Hom}(S,BG)& \rightarrow & M(S,G) &\rightarrow & \underline{Hom}(S,B(G/Z)) \\
\downarrow & & \downarrow & & \downarrow\\
\ast & \rightarrow & K(Z,2) & \rightarrow &  \underline{Hom}(S,K(Z,2))
\end{array} .
$$
Here the bottom right map is the ``constant along $S$'' construction induced by pullback 
along $S\rightarrow \ast$. 

The bottom left arrow $\ast \rightarrow K(Z,2)$ is the universal $Z$-gerbe, so its pullback
on the upper right is again a $Z$-gerbe. We have thus constructed a stack
$M(S,G)$ over which $\underline{Hom}(S,BG)$ is a $Z$-gerbe. From the definition it is
{\em a priori} a $2$-stack, and indeed $M(\emptyset , G)=K(Z,2)$, but the following
alternate characterization tells us that $M(S,G)$ is usually a usual $1$-stack. 

\begin{proposition}
\label{alternate}
Suppose $S$ is a nonempty connected CW-complex with basepoint $x$. Then 
the hybrid moduli stack may be expressed as the stack-theoretical quotient
$$
M(S,G) = {\rm Rep}(\pi _1(S,x), G) \stackmod (G/Z) .
$$
In particular, it is an Artin $1$-stack. 
\end{proposition}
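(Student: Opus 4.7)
The plan is to construct an explicit equivalence $N := [{\rm Rep}(\pi_1(S,x), G) \stackmod (G/Z)] \xrightarrow{\sim} M(S,G)$ by unpacking the universal properties on both sides. First I would invoke the standard identification that uses $BH$ being 1-truncated as an Artin stack: for any affine algebraic group $H$, every map from $S$ to $BH$ factors through the 1-truncation $B\pi_1(S,x)$ of the pointed connected CW complex, yielding
\[
\underline{Hom}(S, BH) \;\simeq\; \bigl[{\rm Rep}(\pi_1(S,x), H)\, /\, H\bigr],
\]
with $H$ acting by conjugation. Specializing to $H = G/Z$ identifies the upper-right corner of the defining square of $M(S,G)$ with $[{\rm Rep}(\pi_1(S,x), G/Z)/(G/Z)]$.

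Next I would construct the comparison map $N \to M(S,G)$ via the fiber product's universal property. A $T$-point of $N$ is a $G/Z$-torsor $P$ on $T$ together with a $(G/Z)$-equivariant morphism $\sigma : P \to {\rm Rep}(\pi_1(S,x), G)$. Postcomposing $\sigma$ with $G \to G/Z$ produces a $T$-point of $\underline{Hom}(S, B(G/Z))$, namely a $G/Z$-local system $V$ on $T \times S$, while the obstruction to lifting $P$ itself to a $G$-torsor defines a $Z$-gerbe on $T$, i.e.\ a map $T \to K(Z,2)$. The required commutativity is checked locally on $T$: on a cover where $P$ trivializes and $\sigma$ is represented by an honest homomorphism $\rho : \pi_1(S,x) \to G$, the local system $V$ lifts to a $G$-local system via $\rho$, and the \v{C}ech cocycle obstructing the gluing of these $\rho$'s to a global $G$-representation is precisely the pullback of the above $Z$-gerbe.

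To verify the comparison is an equivalence I would check it on $T$-points. Unpacking a $T$-point of $M(S,G)$ yields a $G/Z$-local system $V$ on $T\times S$, a $Z$-gerbe $\mathcal{H}$ on $T$, and an identification of $\pi^\ast \mathcal{H}$ with the obstruction to lifting $V$ to a $G$-local system. Restricting to $T \times \{x\}$ produces a $G/Z$-torsor $P$ on $T$; the identification of obstruction gerbes, combined with connectedness of $S$ (which ensures that the obstruction gerbe on $T \times S$ being pulled back from $T$ exactly records a lift of the torsor-obstruction at the basepoint), provides the extra data needed to promote $P$ to a $(G/Z)$-equivariant map into ${\rm Rep}(\pi_1(S,x), G)$. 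The two constructions are mutually inverse up to canonical isomorphism.

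Since $N$ is manifestly an Artin 1-stack, being the quotient of the affine representation scheme by the algebraic group $G/Z$, the equivalence transports this property to $M(S,G)$. The \emph{a priori} 2-stacky contribution disappears because the basepoint $x$ furnishes a section of the evaluation $\underline{Hom}(S, K(Z,2)) \to K(Z,2)$, eliminating the residual $BZ$ of automorphisms that would otherwise persist as in the empty case $M(\emptyset,G) = K(Z,2)$. The main obstacle, in my view, is the precise identification of two distinct-looking $Z$-gerbes---one classifying the obstruction to lifting the $G/Z$-torsor $P$ on $T$, the other classifying the obstruction to lifting the $G/Z$-local system $V$ on $T \times S$---and verifying that they agree under pullback along $T \times S \to T$. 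This compatibility is exactly where the connectedness and pointedness hypotheses on $S$ are used.
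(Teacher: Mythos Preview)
Your approach is sound in outline but takes a genuinely different route from the paper's. The paper never unpacks $T$-points or tracks gerbe obstructions explicitly. Instead it argues entirely with homotopy cartesian squares of mapping stacks. The key device is the \emph{pointed} mapping stack $\underline{Hom}((S,x),(BH,o))$, which for any $H$ sits as the fiber of evaluation-at-$x$ over the basepoint of $BH$; for $H=G$ this is precisely ${\rm Rep}(\pi_1(S,x),G)$. The paper then stacks two diagrams. In the first it uses that the composite $K(Z,2)\to\underline{Hom}(S,K(Z,2))\xrightarrow{{\rm ev}_x}K(Z,2)$ is the identity to conclude that the full left rectangle (with ${\rm Rep}(\pi_1,G)$ at the top and $K(Z,2)$ at the bottom) is cartesian. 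In the second it inserts $M(S,G)$ in the middle row, observes the lower-left square is cartesian by definition, and deduces the upper-left square is cartesian---which says exactly that ${\rm Rep}(\pi_1,G)$ is the fiber of $M(S,G)\to B(G/Z)$, hence $M(S,G)$ is the stack quotient.

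What each approach buys: the paper's diagram chase handles all $2$-categorical coherence automatically---the ``main obstacle'' you identify (matching the two $Z$-gerbes) never appears as a separate verification because it is absorbed into the pasting law for cartesian squares. Your approach is more transparent about the moduli-theoretic content (one literally sees the torsor $P$, the local lifts $\rho$, and the \v{C}ech cocycle), but it requires you to supply the $2$-isomorphism witnessing commutativity as \emph{data}, not merely check its local existence; your sketch is consistent with doing this via the local $\rho$'s, but that step deserves more care than ``checked locally on $T$'' suggests. Your observation that the basepoint furnishes a section of ${\rm ev}_x:\underline{Hom}(S,K(Z,2))\to K(Z,2)$ is essentially the same insight the paper exploits, so the two arguments are closer in spirit than they first appear.
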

\begin{proof}
The representation space may be viewed as a mapping stack
$$
{\rm Rep}(\pi _1(S,x), G) = \underline{Hom}((S,x), (BG,o)).
$$
Consider the big diagram
$$
\begin{array}{ccccc}
\underline{Hom}((S,x), (BG,o)) & \rightarrow & \underline{Hom}((S,x), (B(G/Z),o)) & & \\
\downarrow & & \downarrow & & \\
\ast & \rightarrow & \underline{Hom}((S,x), (K(Z,2),o)) & \rightarrow & \ast \\
\downarrow & & \downarrow & & \downarrow \\
K(Z,2) & \rightarrow & \underline{Hom}(S, K(Z,2)) & \rightarrow & K(Z,2)
\end{array}
$$
where the bottom right map is evaluation at $x$. The pointed mapping $2$-stack in the
middle is defined by the condition that the bottom right square is homotopy cartesian. 
The composition along the bottom is the identity, so if we take the homotopy fiber
product on the bottom left, the full bottom rectangle is a pullback too so that homotopy
fiber product would be $\ast$ as is written in the diagram. In other words, the bottom left
square is also homotopy cartesian. The middle horizontal map on the left sends the
point to the map $S\rightarrow o \hookrightarrow K(Z,2)$, indeed it is constant along $S$ because
it comes from pullback of the bottom left map, and its value at $x$ is $o$ because of the
right vertical map. Now, the upper left square is homotopy cartesian, just the result
of applying the pointed mapping stack to the diagram \eqref{kzdiag}. It follows that the
whole left rectangle is homotopy cartesian. 

Consider, on the other hand, the diagram
$$
\begin{array}{ccccc}
\underline{Hom}((S,x), (BG,o)) & \rightarrow & \underline{Hom}((S,x), (B(G/Z),o)) &
\rightarrow & \ast \\
\downarrow & & \downarrow & & \downarrow \\
M(S,G) & \rightarrow & \underline{Hom}(S, B(G/Z)) & \rightarrow & B(G/Z) \\
\downarrow & & \downarrow & &  \\
K(Z,2) & \rightarrow & \underline{Hom}(S, K(Z,2)) & & .
\end{array}
$$
The bottom square is homotopy-cartesian by the definition of $M(S,G)$.
We proved in the previous paragraph that
the full left rectangle is homotopy cartesian. In this $2$-stack situation note that
a commutative rectangle constitutes a piece of data rather than just a property. In this case,
these data for the left squares are obtained by just considering the equivalence found in
the previous paragraph, from $\underline{Hom}((S,x), (BG,o))$ to the homotopy pullback
in the full left rectangle which is the same as the composition of the homotopy pullbacks
in the two left squares. In particular, the upper left square is homotopy-cartesian. 
It now follows that the upper full rectangle is homotopy-cartesian. That exactly says
that we have an action of $G/Z$ on $\underline{Hom}((S,x), (BG,o))= {\rm Rep}(\pi _1(S,x), G)$
and $M(S,G)$ is the quotient. 
\end{proof}

The hybrid moduli stacks also satisfy the same glueing or factorization property as the
usual ones.

\begin{lemma}
\label{glueing}
Suppose $S=S_1\cup S_2$ with $S_{12}:= S_1\cap S_2$ excisive. Then 
$$
M(S,G) \cong M(S_1,G)\times _{M(S_{12},G)} M(S_2,G).
$$
\end{lemma}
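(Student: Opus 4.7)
The plan is to deduce the formula directly from the defining cartesian square of $M(S,G)$ given in the previous section, using the fundamental fact that the mapping $2$-stack functor $\underline{Hom}(-,T)$ converts homotopy pushouts in its source into homotopy fiber products in its target.

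First I would record the excision equivalence: since $S_{12}$ is excisive in $S = S_1\cup S_2$, we have
$$\underline{Hom}(S, T) \;\cong\; \underline{Hom}(S_1, T) \times_{\underline{Hom}(S_{12}, T)} \underline{Hom}(S_2, T)$$
for any target higher stack $T$, and I would apply this with $T = B(G/Z)$ and with $T = K(Z,2)$ to the two upper corners of the cartesian square defining $M(S,G)$.

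Next I would substitute these decompositions into the defining homotopy pullback
$$M(S,G) \;\cong\; \underline{Hom}(S, B(G/Z)) \times_{\underline{Hom}(S, K(Z,2))} K(Z,2),$$
and observe that the \emph{constant along $S$} map $K(Z,2) \to \underline{Hom}(S, K(Z,2))$ factors through the iterated diagonal $K(Z,2) \to K(Z,2) \times_{K(Z,2)} K(Z,2)$ composed with the three constant-along-$S_i$ maps for $i \in \{1, 12, 2\}$, in a manner compatible with the excision equivalence above. Rearranging the resulting iterated homotopy limit by commuting pullbacks of pullbacks, I would reorganize it as the homotopy pullback
$$M(S_1, G) \times_{M(S_{12}, G)} M(S_2, G),$$
where each of the three factors is defined by the analogous cartesian square for $S_1$, $S_{12}$, or $S_2$ in place of $S$.

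The main obstacle is the $2$-categorical bookkeeping at the step where the constant-along-$S$ structure is decomposed: one must verify that, under the excision equivalence for $K(Z,2)$-valued mapping stacks, it corresponds coherently to the three constant-along-$S_i$ structures and their compatibilities on overlaps. Once this compatibility is in place, commutation of homotopy limits gives the identification formally. As a sanity check one may, under the hypothesis of Proposition \ref{alternate}, unwind both sides to ordinary $1$-stacks of the form $\mathrm{Rep}(\pi_1, G) \stackmod (G/Z)$ and observe that the identification reduces to the usual Seifert--van Kampen glueing of representation varieties, divided by the diagonal action of $G/Z$.
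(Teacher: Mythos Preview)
Your proposal is correct and follows essentially the same approach as the paper's own proof: both observe that the three functors $S\mapsto \underline{Hom}(S,B(G/Z))$, $S\mapsto \underline{Hom}(S,K(Z,2))$, and the constant functor $S\mapsto K(Z,2)$ each satisfy the glueing property, and then invoke commutation of homotopy fiber products. The paper's proof is terser---it dispatches your ``main obstacle'' in one sentence by noting that the constant functor trivially satisfies glueing---but you have correctly unpacked the same argument.
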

\begin{proof}
The mapping stacks entering into the definition of $M(S,G)$ as a homotopy
pullback, satisfy this
glueing property. Notice that this is true even for the constant functor which 
associates to any $S$ the stack $K(Z,2)$. The homotopy pullback therefore also satisfies
the glueing property since fiber products commute with other fiber products. 
\end{proof}

Suppose $G=GL_r$
so $Z=\Gm$ and $G/Z = PGL_r$, and suppose $S$ is a connected CW-complex.
Let 
$$
\underline{Hom}(S,BGL_r)^{\rm irr} \subset \underline{Hom}(S,BGL_r)
$$
denote the open substack of irreducible local systems. It is classical that 
the stack $\underline{Hom}(S,BGL_r)$ has a {\em coarse moduli space} ${\rm M}_B (S,GL_r)$,
and that the open substack
$\underline{Hom}(S,BGL_r)^{\rm irr}$ is a $\Gm$-gerbe over the corresponding open subset
of the coarse moduli space ${\rm M}_B(S, G)^{\rm irr}$. 

\begin{proposition}
In the situation of the previous paragraph,
we have a map 
$$
M(S,GL_r)\rightarrow {\rm M}_B (S,GL_r)
$$
which restricts to an isomorphism 
$$
M(S,GL_r)^{\rm irr}\cong {\rm M}_B (S,GL_r)^{\rm irr}
$$
between
the open subsets parametrizing
irreducible local systems. 
\end{proposition}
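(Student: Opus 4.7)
The plan is to exploit the concrete description furnished by Proposition \ref{alternate}:
$$
M(S,GL_r) \;=\; {\rm Rep}(\pi_1(S,x), GL_r) \stackmod PGL_r,
$$
recalling that the center $\Gm\subset GL_r$ acts trivially by conjugation, so the $GL_r$-action on the representation scheme factors through $PGL_r = GL_r/\Gm$. In parallel, the coarse moduli space is the affine GIT quotient
$$
{\rm M}_B(S, GL_r) \;=\; {\rm Rep}(\pi_1(S,x), GL_r) \,/\!\!/\, GL_r \;=\; {\rm Rep}(\pi_1(S,x), GL_r) \,/\!\!/\, PGL_r.
$$

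First I would construct the natural map. Because the GIT quotient is the universal affine target of a $PGL_r$-equivariant morphism out of ${\rm Rep}$, the tautological map ${\rm Rep}\rightarrow {\rm Rep}/\!\!/ PGL_r$ descends along the stack quotient to give a canonical morphism
$$
M(S,GL_r) \;=\; \bigl[{\rm Rep}(\pi_1(S,x), GL_r)/PGL_r\bigr] \;\longrightarrow\; {\rm M}_B(S, GL_r).
$$

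Next I would analyze the irreducible locus. Let ${\rm Rep}^{\rm irr}\subset {\rm Rep}(\pi_1(S,x), GL_r)$ be the open subscheme of irreducible representations. By Schur's lemma, the stabilizer in $GL_r$ of an irreducible representation is exactly the center $\Gm$, so the induced $PGL_r$-stabilizer is trivial, and the $PGL_r$-action on ${\rm Rep}^{\rm irr}$ is free. Since irreducible representations are GIT-stable, all $PGL_r$-orbits in ${\rm Rep}^{\rm irr}$ are closed, and the action is proper on this locus.

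Finally, because $PGL_r$ acts freely and properly on ${\rm Rep}^{\rm irr}$, the stack-theoretic quotient $\bigl[{\rm Rep}^{\rm irr}/PGL_r\bigr]$ is represented by a scheme, and this scheme is canonically the geometric quotient, which coincides with the open subscheme ${\rm M}_B(S, GL_r)^{\rm irr}$ of the GIT quotient. Hence the natural morphism restricts to an isomorphism
$$
M(S,GL_r)^{\rm irr} \;\cong\; {\rm M}_B(S, GL_r)^{\rm irr}
$$
as asserted. The main point (and the only nontrivial ingredient) is the identification of stabilizers on the irreducible locus via Schur's lemma, combined with the standard fact that free and proper group actions produce scheme quotients that agree with the GIT quotient; all other steps are formal once Proposition \ref{alternate} is in hand.
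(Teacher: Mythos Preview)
Your argument is correct. The paper does not actually supply a proof for this proposition; it is stated as an immediate consequence of the preceding paragraph, which records the classical fact that $\underline{Hom}(S,BGL_r)^{\rm irr}$ is a $\Gm$-gerbe over ${\rm M}_B(S,GL_r)^{\rm irr}$, together with the defining property that the hybrid stack is obtained precisely by killing that central $\Gm$-gerbe. Your route via Proposition~\ref{alternate} and the representation variety is slightly more concrete than this gerbe-theoretic one-liner, but it is exactly the method the paper itself deploys later in the proof of Proposition~\ref{variety} (there using Luna's \'etale slice theorem to justify that the stack quotient coincides with the affine GIT quotient when stabilizers are trivial). So your proof is both correct and fully in the spirit of the paper.
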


The same holds for $G=SL_r$. 

\begin{lemma}
The determinant map $GL_r \stackrel{{\rm det}}{\rightarrow} \Gm$ induces a cartesian diagram
$$
\begin{array}{ccc}
M(S, SL_r) &\rightarrow & M(S,GL_r) \\
\downarrow & & \downarrow\\
\ast & \rightarrow &  M(S, \Gm )
\end{array}
$$
which essentially says that $M(S,SL_r)$ is the substack of $M(S,GL_r)$ parametrizing local systems
of trivial determinant. Note that $M(S, \Gm )$ is isomorphic to the quasiprojective variety 
${\rm Hom}(H_1(S), \Gm )$. 
\end{lemma}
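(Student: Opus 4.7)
My plan is to deduce the cartesian property directly from Proposition \ref{alternate}, which reduces the statement to a standard fact about base change of stack quotients along a trivially-acted base. Throughout I assume $S$ is nonempty, connected, and equipped with a basepoint $x$; the general case follows component-wise.

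First, I would apply Proposition \ref{alternate}. Since $GL_r/Z(GL_r) = PGL_r = SL_r/Z(SL_r)$, the proposition gives
$$
M(S, GL_r) = {\rm Rep}(\pi_1(S,x), GL_r) \stackmod PGL_r, \qquad M(S, SL_r) = {\rm Rep}(\pi_1(S,x), SL_r) \stackmod PGL_r.
$$
For $G = \Gm$ the group $G/Z$ is trivial, so $M(S, \Gm) = {\rm Rep}(\pi_1(S,x), \Gm) = {\rm Hom}(H_1(S), \Gm)$ with no stackiness. Since $H_1(S)$ is finitely generated, this is an affine (hence quasiprojective) group scheme, which already secures the last sentence of the lemma.

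Next I would construct the right-hand vertical arrow. Post-composition with $\det$ yields a morphism ${\rm Rep}(\pi_1, GL_r) \to {\rm Rep}(\pi_1, \Gm)$ which is $PGL_r$-equivariant: conjugation preserves determinants, and $PGL_r$ acts trivially on the target since $\Gm$ is abelian. Hence it descends to the required morphism $M(S, GL_r) \to M(S, \Gm)$, while the bottom horizontal arrow is the inclusion of the trivial representation. The cartesian property then amounts to two observations: ${\rm Rep}(\pi_1, SL_r)$ is literally the scheme-theoretic fiber of the determinant map over $1 \in {\rm Rep}(\pi_1, \Gm)$, and base change along $\ast \to {\rm Rep}(\pi_1, \Gm)$ commutes with the formation of $\stackmod PGL_r$ because $PGL_r$ acts trivially on the base. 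Combining these yields $M(S, SL_r) \cong M(S, GL_r) \times_{M(S, \Gm)} \ast$.

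The only subtle point, which I would flag but not belabor, is confirming that the morphism constructed above through Proposition \ref{alternate} agrees with the one one could produce intrinsically from the functoriality of the $2$-stack construction $\underline{Hom}(S, B(G/Z)/K(Z, 2))$ applied to $\det$ (where the latter route also involves the induced map on centers $\Gm \xrightarrow{(-)^r} \Gm$). Since Proposition \ref{alternate} provides a natural equivalence between the two descriptions, this compatibility is automatic and no serious obstacle arises; the main work of the lemma has already been packaged into that proposition.
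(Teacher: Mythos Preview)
The paper states this lemma without proof, so there is nothing to compare against directly. Your argument via Proposition \ref{alternate} is correct and is precisely the route the paper's setup invites: once both $M(S,GL_r)$ and $M(S,SL_r)$ are identified with quotients of representation varieties by the \emph{same} group $PGL_r$, and $M(S,\Gm)$ with the bare representation variety, the cartesian property reduces to the evident fiber-product identity at the level of representation schemes together with the fact that a stack quotient by a group acting trivially on the base commutes with base change. Your closing remark about compatibility with the intrinsic $2$-stack definition is the right caveat to raise, and you are right that the naturality in Proposition \ref{alternate} dispatches it.
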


In what follows, we shall use these stacks $M(S,GL_r)$ which we 
call
{\em hybrid moduli stacks} as good replacements intermediary between
the moduli stacks of local systems and their coarse moduli spaces.

\section{Boundary conditions}
\label{sec-local}

Let $S$ denote a $2$-sphere with $k$ open disks removed. It has $k$ boundary
circles denoted $\xi _1,\ldots , \xi _k\subset S$ and 
$$
\partial S = \xi _1\sqcup \cdots \sqcup \xi _k .
$$
From now on we consider rank $2$ local systems on this surface $S$. 

Fix complex numbers $c_1,\ldots , c_k$ all different from $0$, $1$ or $-1$. Let 
$$
C_i := \left\{ P \left(
\begin{array}{cc}
c_i & 0 \\
0 & c_i ^{-1} 
\end{array}
\right) P^{-1} \right\}
$$
denote the conjugacy class of matrices with eigenvalues $c_i, c_i^{-1}$. 

Consider the hybrid moduli stack $M(S, GL_2)$ constructed above,
and let  
$$
M(S;C_{\cdot}) \subset M(S,GL_2)
$$
denote the closed substack consisting of local systems such that the monodromy transformation
around $\xi _i$ is in the conjugacy class $C_i$.  See \cite{Letellier}. 

If we choose a basepoint $x\in S$ and paths $\gamma _i$ going from $x$ out by straight paths to the boundary
circles, around once and then back to $x$, then $\pi _1(S,x)$ is generated by the $\gamma _i$
subject to the relation that their product is the identity. 

Therefore, the moduli stack of framed
local systems is the affine variety 
$$
\underline{Hom}((S,x), (BGL_2, o)) = 
{\rm Rep}(\pi _1(S,x), GL_2) 
$$
$$
= \{ (A_1,\ldots , A_k)\in (GL_2)^k \mbox{ s.t. } A_1\cdots A_k =1\} .
$$
The unframed moduli stack is the stack-theoretical quotient
$$
\underline{Hom}(S, BGL_2) = {\rm Rep}(\pi _1(S,x), GL_2)  \stackmod  GL_2 
$$
by the action of simultaneous conjugation. 

The center $\Gm \subset GL_2$ acts trivially on ${\rm Rep}(\pi _1(S,x), GL_2)$ so
the action of $GL_2$ there 
factors through an action of $PGL_2$.

Proposition \ref{alternate} may be restated as

\begin{lemma}
The hybrid moduli stack $M(S,GL_2)$ may be described as
the stack-theoretical quotient
$$
M(S,GL_2) = 
{\rm Rep}(\pi _1(S,x), GL_2)  \stackmod PGL_2 .
$$
\end{lemma}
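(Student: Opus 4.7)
The plan is simply to specialize Proposition \ref{alternate} to the reductive group $G = GL_2$. First I would verify that the hypotheses of that proposition are met: the surface $S$ (a $2$-sphere with $k$ open disks removed) is a nonempty connected CW-complex, and we have already chosen a basepoint $x \in S$ in the preceding discussion, so Proposition \ref{alternate} applies to give
$$
M(S, GL_2) = {\rm Rep}(\pi_1(S,x), GL_2) \stackmod (GL_2/Z(GL_2)).
$$

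Next I would identify the center and the quotient. The center of $GL_2$ is the subgroup of scalar matrices $Z(GL_2) = \Gm$, and by definition the quotient $GL_2/\Gm$ is $PGL_2$. Substituting these identifications yields the claimed formula
$$
M(S, GL_2) = {\rm Rep}(\pi_1(S,x), GL_2) \stackmod PGL_2.
$$

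The only subtle point worth mentioning is that the action of $GL_2$ on ${\rm Rep}(\pi_1(S,x), GL_2)$ by simultaneous conjugation factors through $PGL_2$ because the center acts trivially, as was already noted in the paragraph preceding this lemma; this confirms that the stack-theoretic quotient by $PGL_2$ makes sense as written, and agrees with the description emerging from the general construction of $M(S, G)$ via the $2$-stack diagram \eqref{kzdiag}. There is no genuine obstacle here: the content of the lemma is packaged entirely in Proposition \ref{alternate}, and the present statement merely records the result in the specific form needed for the rest of the paper.
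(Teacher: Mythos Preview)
Your proposal is correct and follows exactly the paper's approach: the paper simply introduces this lemma with the sentence ``Proposition \ref{alternate} may be restated as'', so specializing that proposition to $G=GL_2$ and identifying $GL_2/Z(GL_2)=PGL_2$ is precisely what is intended.
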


Let ${\rm Rep}(\pi _1(S,x), GL_2; C_{\cdot})\subset {\rm Rep}(\pi _1(S,x), GL_2)$
denote the closed subscheme of representations which send $\gamma _i$ to the conjugacy 
class $C_i$. These conditions are equivalent to the equations ${\rm Tr}(\rho (\gamma _i))=c_i+c_i^{-1}$.
We have
$$
{\rm Rep}(\pi _1(S,x), GL_2; C_{\cdot}) = 
\{ (A_1,\ldots , A_k)\,  \mbox{s.t.}\,  A_i\in C_i \mbox{ and } A_1\cdots A_k =1\} .
$$

\begin{corollary}
\label{hybquot}
The hybrid moduli stack with fixed conjugacy classes is given by 
$$
M(S; C_{\cdot} ) = {\rm Rep}(\pi _1(S,x), GL_2; C_{\cdot}) \stackmod PGL_2 
$$
$$
=\{ (A_1,\ldots , A_k) \mbox{ s.t. } A_i\in C_i \mbox{ and } A_1\cdots A_k =1\} \stackmod PGL_2.
$$
It is also isomorphic to the stack one would have gotten by using the group $SL_2$ rather than
$GL_2$. 
\end{corollary}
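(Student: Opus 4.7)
The plan is to derive this directly from the preceding Lemma describing $M(S,GL_2)$ as a stack-theoretical quotient, by interpreting the closed substack condition ``monodromy around $\xi_i$ lies in $C_i$'' as cutting out a $PGL_2$-invariant closed subscheme of ${\rm Rep}(\pi_1(S,x),GL_2)$. The final clause about $SL_2$ follows because every matrix in $C_i$ automatically has determinant $1$.

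First I would invoke the preceding lemma to write
$$
M(S,GL_2) = {\rm Rep}(\pi _1(S,x), GL_2) \stackmod PGL_2 .
$$
The substack $M(S;C_\cdot)\subset M(S,GL_2)$ is defined by a conjugation-invariant condition on monodromies, so under the quotient presentation it corresponds to a $PGL_2$-invariant closed subscheme of the representation scheme. Concretely, the condition that the monodromy around $\gamma_i$ lies in $C_i$ is cut out by the single equation ${\rm Tr}(\rho(\gamma_i)) = c_i + c_i^{-1}$ together with $\det(\rho(\gamma_i))=1$: since $c_i\neq 0,\pm 1$ the number $c_i+c_i^{-1}$ differs from $\pm 2$, so every matrix with this trace and determinant $1$ is regular semisimple and hence conjugate to ${\rm diag}(c_i,c_i^{-1})$. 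Thus the pullback to ${\rm Rep}(\pi_1(S,x),GL_2)$ is precisely the closed subscheme ${\rm Rep}(\pi_1(S,x),GL_2;C_\cdot)$ described by $A_i\in C_i$ and $A_1\cdots A_k=1$, and taking the stack quotient by $PGL_2$ gives the first displayed formula.

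For the $SL_2$ assertion, note that every $A_i\in C_i$ has $\det A_i=1$, so every point of ${\rm Rep}(\pi_1(S,x),GL_2;C_\cdot)$ actually lies in ${\rm Rep}(\pi_1(S,x),SL_2;C_\cdot)$, and the two representation schemes are equal. Applying the same hybrid construction with $G=SL_2$ in place of $G=GL_2$ produces the quotient by $SL_2/\{\pm I\}=PSL_2$, which coincides with $PGL_2$ over $\cc$. Hence the two hybrid quotients agree.

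The only real point requiring attention is the identification of the closed substack with the stack quotient of the invariant closed subscheme: this is a general and routine fact about stack-theoretic quotients, but one must verify that the trace equations genuinely carve out $C_i$ rather than a larger locus, which is where the hypothesis $c_i\neq \pm 1$ enters to rule out the parabolic conjugacy classes sharing the same trace. Everything else is essentially a rewriting of the preceding lemma.
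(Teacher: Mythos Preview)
Your proof is correct and follows essentially the same approach as the paper: both derive the quotient description from the preceding lemma and then observe that the determinant-one condition on the $C_i$ forces the representation into $SL_2$, with $PGL_2=PSL_2$ over $\cc$. Your argument is in fact more careful than the paper's, which simply asserts the first displayed formula and only writes out the $SL_2$ clause; you explicitly note that the determinant condition is needed in addition to the trace condition to cut out $C_i$ inside $GL_2$, and that $c_i\neq\pm 1$ is what excludes the parabolic classes with the same trace.
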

\begin{proof}
Our conjugacy classes have been defined as having determinant one. Since the $\gamma _i$ generate
the fundamental group, if the $\rho (\gamma _i)$ have determinant one then the representation 
$\rho$ goes into $SL_2$. As $PGL_2=PSL_2$, the hybrid moduli stack for $GL_2$ is the same as
for $SL_2$. 
\end{proof}

Recall the following Kostov-genericity condition \cite{Kostov} on the choice of the numbers $c_i$.  

\begin{condition}
\label{Kgen}
For any choice of $\epsilon _1,\ldots , \epsilon _k\in \{ 1,-1\}$ the product
$$
c_1^{\epsilon _1} \cdots c_k^{\epsilon _k}
$$
is not equal to $1$. 
\end{condition}

The following basic lemma has been observed by Kostov and others. 

\begin{lemma}
\label{Kstab}
If Condition \ref{Kgen} is satisfied then any 
representation in 
${\rm Rep}(\pi _1(S,x), GL_2; C_{\cdot})$ is irreducible. 
In particular, the automorphism group of the corresponding $GL_2$ local system is the central
$\Gm$.   
\end{lemma}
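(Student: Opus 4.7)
The plan is to proceed by contradiction, using the standard observation that reducibility forces a multiplicative relation among the eigenvalues which is exactly ruled out by Condition \ref{Kgen}.

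First I would assume that some representation $\rho \in {\rm Rep}(\pi_1(S,x), GL_2; C_\cdot)$ is reducible. Since the rank is $2$, reducibility means there exists a $\rho$-invariant line $L \subset \cc^2$, preserved by every $A_i := \rho(\gamma_i)$. Each $A_i$ lies in $C_i$, so its eigenvalues are exactly $c_i$ and $c_i^{-1}$; the restriction $A_i|_L$ must therefore be multiplication by $c_i^{\epsilon_i}$ for some sign $\epsilon_i \in \{1,-1\}$. Restricting the relation $A_1 \cdots A_k = 1$ to $L$ gives
\[
c_1^{\epsilon_1} \cdots c_k^{\epsilon_k} = 1,
\]
which directly contradicts Condition \ref{Kgen}. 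Hence no such invariant line exists and $\rho$ is irreducible.

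For the second assertion, I would invoke Schur's lemma: the automorphisms of the rank $2$ local system attached to $\rho$ form the centralizer of the image of $\rho$ inside $GL_2(\cc)$, and for an irreducible $2$-dimensional representation this centralizer is the group of scalar matrices, namely the center $\Gm \subset GL_2$.

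There is essentially no obstacle here; the only point requiring a mild care is to verify that the hypotheses $c_i \neq 0,1,-1$ built into the setup of Section~\ref{sec-local} do not get confused with Condition \ref{Kgen} itself. They are only used so that the conjugacy classes $C_i$ are genuine semisimple classes of size $>1$, whereas the full strength of Kostov genericity is what is actually needed to exclude all $2^k$ possible reducible configurations simultaneously.
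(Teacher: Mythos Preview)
Your argument is correct and is exactly the standard one the paper has in mind: the lemma is stated without proof, attributed to ``Kostov and others,'' and the intended reasoning is precisely the invariant-line/eigenvalue-product contradiction plus Schur's lemma that you give.
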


The set of $(c_1,\ldots , c_k)$ satisfying this condition is a
nonempty open subset of $(\Gm - \{ 1,-1\} )^k$. We also speak of the same condition for the
sequence of conjugacy classes $C_{\cdot}$.

\begin{proposition}
\label{variety}
Suppose $C_{\cdot}$ satisfy Condition \ref{Kgen}. The hybrid moduli stack 
$M(S; C_{\cdot} )$ is an irreducible smooth affine variety. It is
equal to the coarse, which is indeed fine, moduli space
${\rm M}_B(S; C_1,\ldots , C_k)$
of local systems with our given conjugacy classes. 
\end{proposition}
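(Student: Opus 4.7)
The plan is to use Corollary \ref{hybquot} to realize
\[
M(S; C_\cdot) \;\cong\; Z \stackmod PGL_2, \qquad Z := \{(A_1,\ldots,A_k) \in C_1\times\cdots\times C_k \,:\, A_1\cdots A_k = 1\},
\]
which is a closed subscheme of the affine variety $\prod_i C_i \subset SL_2^k$. By Lemma \ref{Kstab} every $\rho \in Z$ is irreducible, so its infinitesimal stabilizer in $\mathfrak{pgl}_2$ is $H^0(\pi_1(S,x), \mathrm{ad}\,\rho) = 0$; the same irreducibility upgrades this to a trivial scheme-theoretic stabilizer, so $PGL_2$ acts freely on $Z$.

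Next I would prove that $Z$ is smooth of pure dimension $2k-3$ by the standard tangent--obstruction calculation. The tangent space at $\rho$ is the space of $\mathrm{ad}\,\rho$-valued $1$-cocycles on $\pi_1(S,x)$ whose value on each puncture loop $\gamma_i$ is tangent to $C_i$, and the obstructions lie in the corresponding $H^2$. Because $S$ is an open surface (homotopy equivalent to a wedge of circles) this $H^2$ vanishes. A dimension count using $\chi(S) = 2-k$, the vanishing of $H^0$ at an irreducible representation, and the codimension-one condition $A_i \in C_i \subset SL_2$ at each puncture gives $\dim Z = 2k-3$.

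For affineness and the identification with ${\rm M}_B$, I would invoke GIT. The inclusion $Z \hookrightarrow SL_2^k$ is equivariant for the diagonal conjugation action of $PGL_2$, and irreducibility of every $\rho$ means every $PGL_2$-orbit in $Z$ is closed. A free action of $PGL_2$ on an affine scheme with all orbits closed admits a geometric quotient in the category of schemes, which is affine and coincides simultaneously with the categorical quotient and the stack quotient. The resulting scheme $M(S; C_\cdot)$ is smooth of dimension $2k-3-3 = 2(k-3)$. By construction this is also the GIT quotient defining the coarse moduli space ${\rm M}_B(S; C_1,\ldots, C_k)$, and freeness of the $PGL_2$-action (equivalently, the fact that the hybrid construction has divided out the residual central $\Gm$-gerbe over the irreducible locus) makes this coarse moduli space fine.

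The remaining point, which I expect to be the main obstacle, is irreducibility. Given smoothness, it reduces to connectedness of $Z$. For Kostov-generic semisimple classes on a punctured sphere this is classical and can be extracted from the literature on character varieties: either via the Mehta--Seshadri correspondence with the connected moduli space of parabolic bundles on $\pp^1$, or from results of Crawley-Boevey, Letellier, or Hausel--Letellier--Rodriguez-Villegas. A self-contained alternative is induction on $k$ using a pair-of-pants decomposition (as in Lemma \ref{glueing}, which anticipates the structure used in the main argument of the paper), the base case $k=3$ being the classical rigidity: under Condition \ref{Kgen}, a rank-two local system on a three-holed sphere with prescribed semisimple monodromies is unique up to isomorphism, so $Z$ is a single free $PGL_2$-orbit and $M(S; C_\cdot)$ is a point.
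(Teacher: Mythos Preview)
Your argument is essentially the same as the paper's: both start from Corollary~\ref{hybquot}, use Lemma~\ref{Kstab} to get trivial stabilizers, identify the stack quotient with the affine GIT quotient ${\rm Spec}(A^{PGL_2})$, and defer irreducibility to the literature (Hausel--Letellier--Rodriguez-Villegas, Letellier, Soibelman). The only substantive differences are of emphasis: where you assert that a free action with closed orbits yields a geometric quotient coinciding with the stack quotient, the paper names the tool explicitly---Luna's \'etale slice theorem---to conclude that ${\rm Spec}(A)\to{\rm Spec}(A^{PGL_2})$ is an \'etale $PGL_2$-bundle; and where the paper simply cites the literature for smoothness, you sketch the standard tangent--obstruction argument via vanishing of $H^2$ on an open surface. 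Your added detail on smoothness is welcome, but you should make the Luna step explicit, since your general claim about free actions with closed orbits is not quite self-evident without it.
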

\begin{proof}
The representation space ${\rm Rep}(\pi _1(S,x), GL_2; C_{\cdot})$ is an affine variety,
call it ${\rm Spec}(A)$,
on which the group $PGL_2$ acts. 
The moduli space is by definition
$$
{\rm M}_B(S; C_1,\ldots , C_k):= {\rm Spec}(A^{PGL_2}).
$$ 
By Lemma \ref{Kstab} and using the hypothesis \ref{Kgen}
it follows that the stabilizers of the action are trivial.
Luna's etale slice theorem
(see \cite{Drezet}) implies that the quotient map 
$$
{\rm Spec}(A)\rightarrow {\rm Spec}(A^{PGL_2})
$$
is an etale fiber bundle with fiber $PGL_2$. Therefore this quotient is also the 
stack-theoretical quotient:
$$
{\rm Spec}(A^{PGL_2})= {\rm Spec}(A)\stackmod PGL_2 .
$$
By Corollary \ref{hybquot} that stack-theoretical quotient is $M(S;C_{\cdot})$, completing
the identification between the hybrid moduli stack and the moduli space
required for the proposition. 

Smoothness of the moduli space has
been noted in many places, see for example \cite{HauselLetellierRodriguez} \cite{Letellier}.
Irreducibility is proven in a general context in \cite{HauselLetellierRodriguez2} \cite{Letellier}
as a consequence of computations of $E$-polynomials, and a different proof is given in 
\cite{ASoibelman} using moduli stacks of parabolic bundles. In our case
irreducibility could also be obtained by including dimension estimates for the subvarieties which will be
removed in the course of our overall discussion.  
\end{proof}

This proposition says that our hybrid moduli stack $M(S; C_{\cdot} )$ is the same as the
usual moduli space. A word of caution is necessary: we shall also be using 
$M(S',C_{\cdot})$ for subsets $S'\subset S$, and those are in general 
stacks rather than schemes, for example when Condition \ref{Kgen} doesn't hold over $S'$.

\section{Interior conditions and factorization}
\label{sec-interior}

We  now define some conditions concerning what happens in the interior of the surface $S$. 
These conditions will serve to define a stratification of $M(S; C_{\cdot} )$. 
The biggest open stratum denoted $M'$, treated in detail in Section \ref{sec-main}, 
turns out to be the main piece, contributing the essential structure of the 
dual boundary complex. The smaller strata will be negligeable for the dual boundary complex,
in view of Lemmas \ref{negligeable} and \ref{affine} as combined in Proposition \ref{decomp}. 

Divide $S$ into closed regions denoted $S_2,\ldots , S_{k-1}$
such that $S_{i}\cap S_{i+1} = \rho _i$ is a circle for $2\leq i \leq k-2$, and 
the regions are otherwise disjoint. 
We assume that $S_i$ encloses the boundary circle $\xi _i$, so it is a $3$-holed
sphere with boundary circles $\rho _{i-1}$, $\xi _i$ and $\rho _i$. 
The orientation of $\rho _{i-1}$ is reversed when it is viewed from inside 
$S_i$. The end piece $S_2$ has boundary circles $\xi _1$, $\xi _2$ and $\rho _2$ while
the end piece $S_{k-1}$ has boundary circles $\rho _{k-2}$, $\xi _{k-1}$ and $\xi _i$. 
This is a ``pair of pants'' decomposition.

Factorization properties, related to
chiral algebra cf \cite{FrancisGaitsgory} \cite{FrenkelBenZvi}, are a kind of descent. 
We will be applying the factorization property of Theorem \ref{factorization} to the
decomposition of our surface into pieces $S_i$.  This classical technique in geometric topology
was also used extensively in the study of the Verlinde formula. The factorization
is often viewed as coming from
a degeneration of the curve into a union of rational lines with three marked points. 

For our argument it will be important to consider strata of the moduli space defined by
fixing additional combinatorial data with respect to our decomposition. 
To this end, let us consider some nonempty subsets $\sigma _i \subset \{ 0,1\}$
for $i=2,\ldots , k-1$, and conjugacy-invariant subsets $G_2,\ldots , G_{k-2}\subset SL_2$.
We denote by $\alpha = (\sigma _1,\ldots , \sigma _{k-1}; G_2,\ldots , G_{k-2})$
this collection of data. The subsets $G_i$ will impose conditions on 
the monodromy around the circles $\rho _i$, while the $\sigma _i$ will correspond to
the following {\em stability condition} on the restrictions of our local system to
$S_i$. Recall that a local system $V\in M(S;C_{\cdot})$ is required to have monodromy
around $\xi _i$ with eigenvalues $c_i$ and $c_i^{-1}$. We are making a choice of orientation
of these boundary circles, and $c_i\neq c_i^{-1}$ by hypothesis, 
so the $c_i^{-1}$ eigenspace is a well-defined rank $1$ subspace of $V|_{\xi _i}$. 

\begin{definition}
\label{stab-def}
We say that a local system $V|_{S_i}$ on $S_i$, satisfying the conjugacy class
condition, is {\em unstable} if there exists a rank $1$ subsystem
$L\subset V|_{S_i}$ such that the monodromy of $L$ around $\xi _i$ is $c_i^{-1}$. Say that 
$V|_{S_i}$ is {\em stable} 
otherwise. 
\end{definition}

An irreducible local system $V|_{S_i}$ 
is automatically stable; one which decomposes as a direct sum is automatically unstable.
If $V|_{S_i}$ is a nontrivial extension with a unique rank $1$ subsystem $L$, then
$V|_{S_i}$ is unstable if $L|_{\xi _i}$ is the $c_i^{-1}$-eigenspace of the monodromy, 
whereas it is stable if $L|_{\xi _i}$ is the $c_i$-eigenspace. We will later express these
conditions more concretely in terms of vanishing or nonvanishing of a certain matrix
coefficient. 

\begin{definition}
Let $M^{\alpha}(S; C_{\cdot})\subset M(S; C_{\cdot})$ denote the locally closed substack
of local systems $V$ satisfying the following conditions: 
\begin{itemize}

\item 
if $\sigma _i = \{ 0\}$ then $V|_{S_i}$ is required to be unstable; if $\sigma _i = \{ 1\}$ then
it is required to be stable; and if $\sigma _i = \{ 0,1\}$ then there is no condition; and

\item 
the monodromy of $V$ around $\rho _i$ should lie in $G_i$.

\end{itemize}

Consider a subset $S'\subset S$ made up of some or all of the $S_i$ or the circles. 
Let $M ^{\alpha} (S';C_{\cdot})$ denote the moduli stack of local systems on $S'$ satisfying the above
conditions where they  make sense (that is, for the restrictions to those subsets which
are in $S'$). 
\end{definition}

In the case of the inner boundary circles we may just use the notation
$M^{\alpha}  (\rho _i)$ since the choices of conjugacy classes $C_i$, corresponding
circles $\xi _i$, don't intervene. 

In the case of $S_i$, only the conjugacy class $C_i$ matters so we may use the notation 
$M ^{\alpha} (S_i;C_i)$.

Suppose $S'\subset S$ is connected and $x\in S'$. Let 
$$
{\rm Rep}^{\alpha}(\pi _1(S',x), GL_2; C_{\cdot}) \subset 
{\rm Rep}(\pi _1(S',x), GL_2)
$$
denote the locally closed subscheme of representations which
satisfy conjugacy class conditions corresponding to $C_{\cdot}$
and the conditions corresponding to $\alpha$, that is  
to say whose corresponding local systems are in $M ^{\alpha} (S';C_{\cdot})$. 
Proposition \ref{alternate} says:

\begin{lemma}
The simultaneous conjugation action of $GL_2$ on the space of representations
${\rm Rep}^{\alpha}(\pi _1(S',x), GL_2; C_{\cdot})$ factors through an action of $PGL_2$ and
$$
M ^{\alpha} (S';C_{\cdot})={\rm Rep}^{\alpha}(\pi _1(S',x), GL_2; C_{\cdot})\stackmod PGL_2
$$
is the stack-theoretical quotient. 
\end{lemma}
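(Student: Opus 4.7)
The plan is to reduce the statement to Proposition \ref{alternate} (and the closely analogous Corollary \ref{hybquot}) by checking that each of the extra conditions defining $M^{\alpha}(S';C_{\cdot})$ pulls back, on the framed representation variety, to a $PGL_2$-invariant locally closed subscheme. First I would apply Proposition \ref{alternate} to the nonempty connected subsurface $S'$ with basepoint $x$, which gives the presentation
$$
M(S',GL_2) \;=\; {\rm Rep}(\pi_1(S',x),GL_2) \stackmod PGL_2,
$$
where the $PGL_2$-action is simultaneous conjugation, the central $\Gm \subset GL_2$ acting trivially so that the $GL_2$-action automatically factors.

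Next I would check that each condition entering the definition of $M^{\alpha}(S';C_{\cdot})$ is intrinsic to the local system, and therefore translates, at the framed level, to a condition on tuples $(A_{\gamma})$ that is invariant under simultaneous conjugation. The conjugacy-class condition $A_i \in C_i$ for $\xi_i \subset S'$ is invariant because $C_i$ is a conjugacy class; the condition that the monodromy around $\rho_i$ (a prescribed word in the generators) lies in $G_i$ is invariant because $G_i$ was assumed conjugation-invariant; and the stability condition of Definition \ref{stab-def} is manifestly invariant since it concerns the isomorphism class of the restricted local system $V|_{S_i}$. Local closedness is standard in each case: the first two are closed conditions, and stability/instability is the non-vanishing/vanishing of a matrix coefficient (as announced in the remarks following Definition \ref{stab-def}), hence locally closed.

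Putting these ingredients together, ${\rm Rep}^{\alpha}(\pi_1(S',x),GL_2;C_{\cdot})$ is a $PGL_2$-invariant locally closed subscheme of ${\rm Rep}(\pi_1(S',x),GL_2)$, and taking its stack-theoretical quotient by $PGL_2$ identifies it with the pullback of the locally closed substack $M^{\alpha}(S';C_{\cdot}) \subset M(S',GL_2)$ under the presentation displayed above. The only step requiring any real care is the locally closed nature of the stability condition, but the explicit matrix-coefficient reformulation makes this bookkeeping rather than a substantive obstacle; everything else is a formal consequence of the already-established Proposition \ref{alternate}.
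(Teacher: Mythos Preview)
Your proposal is correct and follows the same route as the paper: the paper simply states that ``Proposition \ref{alternate} says'' this lemma, leaving implicit the bookkeeping you spell out about the $\alpha$-conditions defining a $PGL_2$-invariant locally closed subscheme. Your added verification that the conjugacy-class, $G_i$, and stability conditions are conjugation-invariant and locally closed is the natural unpacking of that citation.
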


The hybrid moduli stacks allow us to state a glueing or {\em factorization property},
expressing the fact that a local system $L$ on $S$ may be viewed as being obtained
by glueing together its pieces $L|_{S_i}$ along the circles $\rho _i$. 

\begin{theorem}
\label{factorization}
We have the following expression using homotopy fiber products of stacks:
$$
M ^{\alpha} (S;C_{\cdot})= M^{\alpha}  (S_2;C_{\cdot})\times _{M^{\alpha}  (\rho _i)} M^{\alpha}  (S_3
;C_{\cdot}) 
\cdots \times _{M^{\alpha}  (\rho _{k-2})}
M^{\alpha}  (S_{k-1};C_{\cdot}).
$$
\end{theorem}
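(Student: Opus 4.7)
The plan is to reduce the statement to an iterated application of Lemma \ref{glueing}, and then to verify that the conditions packaged in $\alpha$ pass through fiber products because they are, by construction, local on the pieces of the pair-of-pants decomposition.

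First, I would introduce the partial unions $T_j := S_2 \cup S_3 \cup \cdots \cup S_j$, so that $T_{k-1} = S$ and $T_{j+1} = T_j \cup S_{j+1}$ with $T_j \cap S_{j+1} = \rho_j$. Since $\rho_j$ is a smooth embedded circle of codimension one in the surface, this intersection is excisive in the sense required by Lemma \ref{glueing}. Applying that lemma (with $G = GL_2$) at each stage yields
$$
M(T_{j+1}, GL_2) \cong M(T_j, GL_2) \times_{M(\rho_j, GL_2)} M(S_{j+1}, GL_2),
$$
and iterating from $j = 2$ through $j = k-2$ gives the desired form of the fiber product for $M(S, GL_2)$, before any conditions are imposed.

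Next, I would verify that the locally closed substack $M^\alpha(S;C_\cdot) \subset M(S, GL_2)$ is cut out by pulling back the corresponding locally closed substacks $M^\alpha(S_i; C_\cdot) \subset M(S_i, GL_2)$ and $M^\alpha(\rho_i) \subset M(\rho_i, GL_2)$ along the projections from the iterated fiber product. This is the key compatibility: the conjugacy class condition around $\xi_i$ depends only on $V|_{\xi_i}$, which lies in $S_i$; the monodromy condition $G_i$ around $\rho_i$ depends only on $V|_{\rho_i}$; and the stability condition $\sigma_i$, per Definition \ref{stab-def}, is a condition on the existence of a rank-$1$ subsystem $L \subset V|_{S_i}$ with prescribed monodromy on $\xi_i \subset S_i$, hence depends only on $V|_{S_i}$. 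Because fiber products commute with pullbacks of locally closed substacks, the iterated fiber product of the $M^\alpha(S_i; C_\cdot)$ over the $M^\alpha(\rho_i)$ is precisely the locus in the iterated fiber product of the $M(S_i, GL_2)$ where all conditions hold simultaneously, and this locus is by definition $M^\alpha(S;C_\cdot)$.

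The only genuine step, beyond bookkeeping, is checking that the stability condition is really local on $S_i$; but this is immediate since the subsystem $L$ in Definition \ref{stab-def} is intrinsic to $V|_{S_i}$ and the eigenvalue condition $c_i^{-1}$ is measured on $\xi_i \subset S_i$. No condition couples different pieces except through the circles $\rho_i$, which is exactly what the fiber products record. Combining this with the iterated glueing above yields the stated factorization as homotopy fiber products of stacks.
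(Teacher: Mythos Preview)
Your proposal is correct and follows the same approach as the paper, which simply says ``Apply Lemma \ref{glueing}.'' You have filled in the details the paper leaves implicit: the iteration over the partial unions $T_j$ and the verification that each condition in $\alpha$ is local to a single $S_i$ or $\rho_i$ and hence passes through the fiber products.
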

\begin{proof}
Apply Lemma \ref{glueing}. 
\end{proof}

\begin{corollary}
Suppose the requirements given for the boundary pieces of $\partial S'$ (which are
circles either of the form $\xi _i$ or $\rho _i$) satisfy Condition \ref{Kgen}
for $S'$. Then
the moduli stack $M^{\alpha}  (S';C_{\cdot})$ is in fact a quasiprojective variety.
\end{corollary}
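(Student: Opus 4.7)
The plan is to mimic the proof of Proposition \ref{variety} in this slightly more general setting. Using the lemma immediately preceding the corollary, I would write
$$
M^{\alpha}(S';C_{\cdot}) = \mathrm{Rep}^{\alpha}(\pi_1(S',x),GL_2;C_{\cdot})\stackmod PGL_2,
$$
where the representation scheme is a $PGL_2$-invariant locally closed subscheme of the affine scheme $\mathrm{Rep}(\pi_1(S',x),GL_2)$, cut out by the fixed-conjugacy-class conditions on the $\xi_i$-boundary circles, the conditions $G_i$ on the $\rho_i$-boundary circles, and the locally closed stability/instability conditions coming from $\alpha$. I would first reduce to the case that $S'$ is connected, since the moduli stack for a disjoint union is the product of the stacks of its connected components.

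The key step is to generalize Lemma \ref{Kstab}: under the hypothesis that Condition \ref{Kgen} is satisfied for $S'$, every representation in $\mathrm{Rep}^{\alpha}(\pi_1(S',x),GL_2;C_{\cdot})$ is irreducible. The standard argument applies: $S'$ is a sphere with holes (obtained from the pair-of-pants decomposition), so a product relation expresses the product of the boundary monodromies, with suitable orientations, as the identity; if $L$ were an invariant line then restricting the monodromies to $L$ would give a product of $\pm 1$-powers of the boundary eigenvalues equal to $1$, contradicting the Kostov condition for $S'$. Consequently the stabilizer in $PGL_2$ of any point of $\mathrm{Rep}^{\alpha}$ is trivial.

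Now exactly as in the proof of Proposition \ref{variety}, Luna's etale slice theorem implies that over the locus of trivial stabilizers the quotient map from $\mathrm{Rep}(\pi_1(S',x),GL_2;C_{\cdot})$ to its categorical quotient is an etale principal $PGL_2$-bundle, with target a quasiprojective variety, and the stack-theoretical quotient coincides with this geometric quotient. Restricting to the $PGL_2$-invariant locally closed subscheme $\mathrm{Rep}^{\alpha}$ exhibits $M^{\alpha}(S';C_{\cdot})$ as a locally closed subvariety of a quasiprojective variety, hence itself quasiprojective.

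The main obstacle I anticipate is setting up the product-of-monodromies argument cleanly. Since $S'$ is built out of pairs of pants glued along some of the interior circles $\rho_i$, one must choose generators for $\pi_1(S',x)$ so that the boundary monodromies really appear as generators subject to a single product relation; for the classical sphere-with-holes case this is standard, but for intermediate $S'$ one must check that the relation, together with the Kostov genericity for $S'$, still rules out an invariant line. Once this is in place, the Luna slice argument carries over verbatim and delivers the quasiprojectivity statement.
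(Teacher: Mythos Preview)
Your proposal is correct and follows essentially the same route as the paper, which simply says ``This follows from Proposition \ref{variety} applied to $S'$.'' You have unpacked that one-line proof into its constituent steps (Lemma \ref{Kstab} for irreducibility, then Luna's slice theorem), and the obstacle you flag about the product-of-monodromies relation is not a real difficulty since any connected $S'$ built from contiguous pairs of pants is itself a sphere with holes and hence has the standard presentation.
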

\begin{proof}
This follows from Proposition \ref{variety} applied to $S'$. 
\end{proof}

\section{Universal objects}
\label{sec-universal}

Let us return for the moment to the general situation of Section \ref{sec-hybrid},
of a space $S$ and a group $G$.
If $x\in S$ is a basepoint, then we obtain a principal $(G/Z)$-bundle over
$\underline{Hom}(S,B(G/Z))$, and this pulls back to a principal $(G/Z)$-bundle
denoted $F(S,x)\rightarrow M(S,G)$. It may be viewed as the bundle of frames
for the local systems, up to action of the center $Z$. 

If $y\in S$ is another point, and $\gamma$ is a path from $x$ to $y$ then it 
gives an isomorphism of principal bundles $F(S,x)\cong F(S,y)$ over $M(S,G)$. 
In particular, $\pi _1(S,x)$ acts on $F(S,x)$ in a tautological representation. 

Suppose $S=S_1\cup S_2$ such that the intersection $S_{12}=S_1\cap S_2$ is connected. 
Choose a basepoint $x\in S_{12}$. This yields principal $(G/V)$-bundles
$F(S_1,x)$ and $F(S_2,x)$ over $M(S_1,G)$ and $M(S_2,G)$ respectively. 
The fundamental group $\pi _1(S_{12},x)$ acts on both of these. 
We may restate the glueing property of Lemma \ref{glueing}
in the following way. 

\begin{proposition}
\label{principal}
We have an isomorphism of stacks lying over the product  
$M(S_1,G)\times M(S_2,G)$,
$$
M(S,G) \cong {\rm Iso}_{\pi _1(S_{12},x)-G/V}(p_1^{\ast} F(S_1,x),
p_2^{\ast}F(S_2,x))
$$
where on the right is the stack of isomorphisms, relative to $M(S_1,G)\times M(S_2,G)$, 
of principal $G/V$-bundles provided with 
actions of $\pi _1(S_{12},x)$.
\end{proposition}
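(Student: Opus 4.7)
The plan is to derive this from the glueing lemma (Lemma \ref{glueing}) by reinterpreting the fiber product $M(S_1,G)\times _{M(S_{12},G)} M(S_2,G)$ in terms of principal $G/Z$-bundles equipped with an action of $\pi _1(S_{12},x)$. First I would unfold the universal property: a $T$-point of the fiber product is a triple $(V_1,V_2,\phi)$ where $V_i$ is a $T$-point of $M(S_i,G)$ and $\phi$ is an isomorphism, in the groupoid $M(S_{12},G)(T)$, between the images of $V_1$ and $V_2$ under the two restriction maps into $M(S_{12},G)$.

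Next I would identify objects of $M(S_{12},G)(T)$ with principal $G/Z$-bundles on $T$ carrying an action of $\pi _1(S_{12},x)$, and morphisms therein with $\pi _1(S_{12},x)$-equivariant isomorphisms of such bundles. The universal frame bundle $F(S_{12},x)$ over $M(S_{12},G)$ with its tautological $\pi _1(S_{12},x)$-action provides the universal example; that this yields an equivalence between $M(S_{12},G)$ and the stack of such pairs follows from the defining homotopy cartesian square for $M$ in Section \ref{sec-hybrid} together with the fact that $B(G/Z)$ classifies principal $G/Z$-bundles. Under this equivalence, the restriction map $M(S_i,G)\rightarrow M(S_{12},G)$ sends a local system whose frame bundle is $F(S_i,x)$, with its $\pi _1(S_i,x)$-action, to that same bundle with the $\pi _1(S_{12},x)$-action obtained by pulling back along the inclusion-induced homomorphism $\pi _1(S_{12},x)\rightarrow \pi _1(S_i,x)$.

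Combining these two steps, the isomorphism $\phi$ in the triple $(V_1,V_2,\phi)$ becomes exactly an isomorphism of principal $G/Z$-bundles
\[
p_1^{\ast}F(S_1,x)_T \;\cong\; p_2^{\ast}F(S_2,x)_T
\]
intertwining the two $\pi _1(S_{12},x)$-actions, which is precisely a $T$-point of the stack ${\rm Iso}_{\pi _1(S_{12},x)-G/Z}$ on the right-hand side. Naturality in $T$ promotes this bijection on points to the asserted isomorphism of stacks over $M(S_1,G)\times M(S_2,G)$.

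The main obstacle is the groupoid-equivalence step: making precise that $M(S_{12},G)$, defined in Section \ref{sec-hybrid} as a homotopy pullback involving the $2$-stack $K(Z,2)$, genuinely classifies ordinary $\pi _1(S_{12},x)$-equivariant $G/Z$-torsors rather than some more refined higher-categorical gadget. Once this descent from the $2$-stack formulation to ordinary principal bundles is in place, as is already implicit in the proof of Proposition \ref{alternate}, the rest of the argument is a formal manipulation of fiber products.
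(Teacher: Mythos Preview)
Your proposal is correct and follows exactly the approach the paper indicates: the paper presents Proposition~\ref{principal} simply as a restatement of the glueing property of Lemma~\ref{glueing} and gives no further proof, and you have correctly filled in the details by unfolding the fiber product over $M(S_{12},G)$ and invoking Proposition~\ref{alternate} to identify $M(S_{12},G)$ with the stack of $\pi_1(S_{12},x)$-equivariant principal $G/Z$-bundles. Your flagged obstacle is exactly the right one, and as you observe it is handled by Proposition~\ref{alternate} under the standing hypothesis that $S_{12}$ is connected.
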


Return now to the notation from the immediately preceding sections. There are several ways of
dividing our surface $S$ into two or more pieces, various of which shall be used in the 
next section. 

Choose basepoints $x_i$ in the interior of $S_i$, and $s_i$ on the boundary circles
$\rho _i$. Connect them by paths, nicely arranged with respect to the other paths
$\gamma _i$. Then, over any subset $S'$ containing a basepoint $x_i$, we obtain
a principal $PGL_2$-bundle $F(S',x_i)\rightarrow M(S',C_{\cdot})$, and the same for $s_i$. Our paths,
when in $S'$, give isomorphisms between these principal bundles. 

It will be helpful to think of the description of glueing given by Proposition \ref{principal},
using these basepoints and paths. The following local triviality property is useful. 

\begin{lemma}
\label{lem62}
Suppose $S'$ has at most one boundary circle of the form $\rho_i$,
and suppose that the conjugacy classes determining the moduli problem on $M^{\alpha}(S',C_{\cdot})$
satisfy Condition \ref{Kgen},
and suppose that $x\in S'$ is one of our basepoints. 
Then the principal $PGL_2$-bundle $F(S',x)\rightarrow 
M^{\alpha}(S',C_{\cdot})$
is locally trivial in the Zariski topology of the moduli space  $M^{\alpha}(S',C_{\cdot})$, and Zariski
locally
$F(S',x)$ 
may be viewed as the projective frame bundle of a rank $2$ vector bundle.
\end{lemma}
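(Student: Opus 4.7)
The plan is to exhibit a reduction of structure group of the principal $PGL_2$-bundle $F(S',x) \to M^{\alpha}(S',C_{\cdot})$ from $PGL_2$ to a Borel subgroup $B$, and then to invoke the fact that connected solvable groups are special in Serre's sense, so that principal $B$-bundles are automatically Zariski-locally trivial.

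The first step is to note that the hypothesis that $S'$ contains at most one boundary circle of the form $\rho_j$ forces $S'$ to contain at least one boundary circle of the form $\xi_i$, since each pair of pants $S_j$ has exactly one $\xi$-type boundary among its three boundary circles. Pick any such $i$. For every representation $\rho$ in ${\rm Rep}^{\alpha}(\pi_1(S',x), GL_2; C_{\cdot})$, the monodromy $\rho(\gamma_i)$ lies in $C_i$ and hence is diagonalizable with distinct eigenvalues $c_i, c_i^{-1}$, so the operator $\rho(\gamma_i) - c_i^{-1} I$ has constant rank one and its kernel $L_\rho \subset \cc^2$ is a well-defined line varying algebraically with $\rho$. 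The map $\rho \mapsto L_\rho$ is therefore a $GL_2$-equivariant morphism from the representation variety to $\pp^1$, and it is $PGL_2$-equivariant since $\Gm \subset GL_2$ acts trivially on $\pp^1$.

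By the corollary at the end of Section \ref{sec-interior}, the genericity hypothesis implies that $M^{\alpha}(S',C_{\cdot})$ is a smooth quasiprojective variety, equal to the hybrid quotient ${\rm Rep}^{\alpha} \stackmod PGL_2$. The morphism above therefore descends to a section of the associated $\pp^1$-bundle $F(S',x) \times^{PGL_2} \pp^1 \to M^{\alpha}(S',C_{\cdot})$, which is exactly a reduction of the structure group of $F(S',x)$ from $PGL_2$ down to the Borel $B \subset PGL_2$ stabilizing the $c_i^{-1}$-line.

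Since $B$ is connected and solvable, it is a special group, so every principal $B$-bundle over a smooth variety is Zariski-locally trivial; extending the structure group back up to $PGL_2$ preserves Zariski-local triviality, so $F(S',x)$ is Zariski-locally trivial as well. On any trivializing open $U \subset M^{\alpha}(S',C_{\cdot})$, the bundle $F(S',x)|_U$ is tautologically the projective frame bundle of the trivial rank-$2$ vector bundle $U \times \cc^2$, yielding the second conclusion. The main subtlety is setting up the $B$-reduction cleanly via the eigenspace construction; once that is in place, the rest is standard.
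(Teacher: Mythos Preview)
Your proof is correct and takes a genuinely different route from the paper's. The paper constructs explicit projective frames: for each choice of three loops $\gamma_{j_1},\gamma_{j_2},\gamma_{j_3}$ around $\xi$-boundaries together with a choice of eigenvalue for each, one obtains three eigenlines in $V_x$; on the Zariski-open locus where these are pairwise distinct they furnish a projective frame, and irreducibility (from Condition~\ref{Kgen}, using that the $\gamma_j$ generate $\pi_1(S')$ when there is at most one $\rho$-boundary) ensures that these open sets cover the moduli space. You instead pick a \emph{single} eigenline to reduce the structure group to a Borel $B\subset PGL_2$, and then invoke that $B\cong \Gm\ltimes\gggg_a$ is special in Serre's sense. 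Your argument is cleaner and uses strictly less of the hypotheses: it only requires one $\xi_i$-boundary with $c_i\neq c_i^{-1}$ and never appeals to irreducibility, so in fact it works for any $S'$ containing at least one $S_j$, not just those with at most one $\rho$-boundary. The paper's approach, by contrast, is more elementary and self-contained, producing the local trivializations by hand without recourse to the theory of special groups.
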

\begin{proof}
Consider a choice of three loops $(\gamma _{j_1},\gamma _{j_2},\gamma _{j_3})$ 
and a choice of one of the two eigenvalues of the 
conjugacy class $C_{j_1}$, $C_{j_1}$, or $C_{j_1}$
for each of them. This gives three rank $1$ eigenspaces in $V_x$
for any local system $V$. Over the Zariski open subset of the moduli space where
these three subspaces are distinct, they provide the required projective frame. 
Notice that the eigenspaces of the $\gamma _j$ cannot all be aligned since 
these loops generate the fundamental group of $S'$, by the hypothesis that there is at most
one other boundary circle $\rho _i$. Therefore, as the our choices
of triple of loops and triple of eigenvalues range over the possible ones, these Zariski open subsets
cover the moduli space. We get the required frames. A framed $PGL_2$-bundle comes from a vector bundle
so $F(S',x)$ locally comes from a $GL_2$-bundle. 
\end{proof}

\section{Splitting along the circle $\rho _i$}
\label{sec-splitting}

In this section we consider one of the circles $\rho_i$ which divides $S$ into two pieces. 
Let 
$$
S_{<i}:=\bigcup _{j<i}S_j\; , \;\;\;\;
S_{>i}:=\bigcup _{j>i}S_j , 
$$
and similarly define $S_{\leq i}$ and $S_{\geq i}$. 
We have the decomposition 
$$
S=S_{\leq i} \cup S_{>i}
$$
into two pieces intersecting along the circle $\rho _i$. 
Thus,
$$
M^{\alpha}(S;C_{\cdot}) = M^{\alpha}(S_{\leq i};C_{\cdot})
\times _{M^{\alpha}(\rho _i)} M^{\alpha}(S_{>i};C_{\cdot}).
$$
This factorization will allow us to analyze strata where $G_i$ is
a unipotent or trivial conjugacy class. The following condition will be in effect: 

\begin{condition}
\label{verygen}
We assume that the sequence of conjugacy classes $C_1,\ldots , C_k$ is {\em very
generic}, meaning that  for any $i$ the partial sequences $C_1,\ldots , C_i$ and 
$C_i,\ldots , C_k$ satisfy Condition \ref{Kgen}, and they also satisfy that condition
if we add the scalar matrix $-1$. 
That is to say, no product of eigenvalues or their inverses should be
equal to either $1$ or $-1$.
\end{condition}

Suppose that $G_i=\{ 1\}$. Then $M^{\alpha} (\rho _i)=B(PGL_2)$. 
On the other hand, Condition \ref{verygen} means that
the sequences of conjugacy classes defining the
moduli problems on $S_{\leq i}$ and $S_{>i}$ themselves satisfy Condition \ref{Kgen}.  
Therefore Proposition \ref{variety} applies saying that the moduli stacks 
$M^{\alpha}(S_{\leq i};C_{\cdot})$ and $M^{\alpha}(S_{>i};C_{\cdot})$ exist as quasiprojective
varieties. 

The projective frame bundles over a basepoint
of $\rho _i$ are principal $PGL_2$-bundles denoted 
$$
F_{\leq i}\rightarrow M^{\alpha} (S_{\leq i};C_{\cdot})
$$
and
$$
F_{>i}\rightarrow M^{\alpha} (S_{>i};C_{\cdot}).
$$
These principal bundles may be viewed as given by the maps 
$$
M^{\alpha} (S_{\leq i};C_{\cdot}) \rightarrow M^{\alpha} (\rho _i)=
B(PGL_2) \leftarrow M^{\alpha} (S_{>i};C_{\cdot}).
$$
These principal bundles are locally trivial in the Zariski topology
by Lemma \ref{lem62}. 

The principal bundle
description of the moduli space in Proposition \ref{principal} now says
$$
M^{\alpha} (S;C_{\cdot}) = 
{\rm Iso} (p_1^{\ast} (F_{\leq i}), p_1^{\ast} (F_{\leq i}) ) / 
M^{\alpha} (S_{\leq i};C_{\cdot})\times M^{\alpha} (S_{>i};C_{\cdot}) .
$$
The bundle of isomorphisms between our two principal bundles, is a fiber bundle with
fiber $PGL_2$, locally trivial in the Zariski topology because the two principal bundles are
Zariski-locally trivial. We may sum up this conclusion with the following lemma, noting that
the argument also works the same way if $G_i = \{ -1\}$. 

\begin{lemma}
\label{idcase}
Under the assumption that $G_i = \{ 1\}$, the moduli space $M^{\alpha} (S;C_{\cdot})$ is a 
fiber bundle over $M(S_{\leq i};C_{\cdot})\times M(S_{>i};C_{\cdot})$, 
locally trivial in the Zariski topology,
with fiber $PGL_2$. The same holds true if $G_i = \{ -1\}$.
\end{lemma}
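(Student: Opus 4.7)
The plan is to bolt together three ingredients already in hand: the factorization of Theorem \ref{factorization}, the principal-bundle reinterpretation of Proposition \ref{principal}, and the Zariski-local triviality criterion of Lemma \ref{lem62}. In fact the preceding paragraphs of the section have essentially done this for $G_i = \{1\}$, so the proof is largely a matter of organizing what is already there, plus a short remark to take care of $G_i=\{-1\}$.

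First I would observe that when $G_i=\{1\}$, the condition on the monodromy around $\rho_i$ forces the representation of $\pi_1(\rho_i)=\mathbb{Z}$ to be trivial, so that in the hybrid moduli stack one has $M^\alpha(\rho_i)=\ast\stackmod PGL_2=B(PGL_2)$. Theorem \ref{factorization} then writes $M^\alpha(S;C_\cdot)$ as the fiber product of $M^\alpha(S_{\leq i};C_\cdot)$ and $M^\alpha(S_{>i};C_\cdot)$ over $B(PGL_2)$. Condition \ref{verygen} was designed exactly so that the sequences $C_1,\ldots,C_i$ and $C_i,\ldots,C_k$, augmented by the trivial class at $\rho_i$, each satisfy Condition \ref{Kgen}, so Proposition \ref{variety} identifies the two factors with smooth quasiprojective varieties rather than mere stacks.

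Next I would invoke Proposition \ref{principal}: the two projections to $B(PGL_2)$ are the classifying maps of the projective frame bundles $F_{\leq i}$ and $F_{>i}$ at the chosen basepoint on $\rho_i$, and the fiber product is the scheme
$$
\mathrm{Iso}(p_1^\ast F_{\leq i},\,p_2^\ast F_{>i}) \longrightarrow M^\alpha(S_{\leq i};C_\cdot)\times M^\alpha(S_{>i};C_\cdot),
$$
which is a torsor for the automorphism group $PGL_2$ of either principal bundle and is, in particular, a fiber bundle with fiber $PGL_2$. To promote this to Zariski-local triviality I would apply Lemma \ref{lem62}: since each of $S_{\leq i}$ and $S_{>i}$ contains at most one boundary circle of the form $\rho_j$, namely $\rho_i$ itself, and since the Kostov-genericity hypothesis of that lemma is in force, both $F_{\leq i}$ and $F_{>i}$ are Zariski-locally trivial. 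An Iso-scheme of two Zariski-locally trivial principal bundles is again Zariski-locally trivial with the same structure group, so the desired conclusion follows.

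For the case $G_i=\{-1\}$, the only additional observation needed is that $-I$ is central in $SL_2$ and maps to the identity in $PGL_2$, so in the hybrid moduli stack $M^\alpha(\rho_i)$ is once more equivalent to $B(PGL_2)$. The Kostov-genericity check for each side now involves a factor of $\pm 1$, and this is precisely the point of the ``scalar matrix $-1$'' clause in Condition \ref{verygen}; with that in place, steps one through three of the previous paragraph go through verbatim. The only subtlety worth flagging, and the one place where the argument could conceivably break, is this compatibility between the splitting and Kostov genericity: were Condition \ref{verygen} weakened to Condition \ref{Kgen}, one of the two subsurfaces could carry reducible representations, Lemma \ref{lem62} would fail there, and Zariski-local triviality of the resulting $PGL_2$-bundle could not be asserted.
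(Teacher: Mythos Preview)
Your proof is correct and follows essentially the same route as the paper: the paragraphs preceding the lemma already assemble factorization over $B(PGL_2)$, Proposition \ref{principal}, and Lemma \ref{lem62} in exactly the way you describe, and the paper likewise dispatches $G_i=\{-1\}$ with a one-line remark. Your treatment of the $-1$ case is slightly more explicit than the paper's in pointing to the relevant clause of Condition \ref{verygen}, which is a nice touch.
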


Consider the next case: suppose that $G_i$ is the conjugacy class of matrices
conjugate to a nontrivial unipotent
matrix
$$
\Uu = 
\left( \begin{array}{cc}
1 & 1 \\
0 & 1
\end{array}
\right) . 
$$
In that case, $M^{\alpha} (\rho _i)= B\gggg _a$. The situation is the same as before: the
moduli spaces $M^{\alpha} (S_{\leq i};C_{\cdot})$ 
and $M^{\alpha} (S_{>i};C_{\cdot}) $ are quasiprojective varieties, and we have
principal bundles $F_{\leq i}$ and $F_{>i}$. This time, these principal bundles have
unipotent automorphisms denoted $R'$ and $R$ respectively, in the conjugacy class of $\Uu$.
We have 
$$
M^{\alpha} (S;C_{\cdot})=
{\rm Iso} 
_{M^{\alpha} (S_{\leq i};C_{\cdot})\times M^{\alpha} (S_{>i};C_{\cdot})}
(p_1^{\ast} (F_{\leq i}, R'), p_1^{\ast} (F_{\leq i},R) )  .
$$
This means the relative isomorphism bundle of the principal bundles together with their 
automorphisms. 

We claim that these principal bundles together with their automorphisms may be trivialized locally
in the Zariski topology. For the principal bundles themselves this is Lemma \ref{lem62}. The unipotent
endomorphisms then correspond, with respect to these local trivializations, to maps
into $PGL_2/\gggg_a$. One can write down explicit sections of the projection
$PGL_2\rightarrow PGL_2/\gggg_a$ locally in the Zariski topology of the base, and these give the
claimed local trivializations. One might alternatively notice here that a $\gggg_a$-torsor
for the etale topology is automatically locally trivial in the Zariski topology by 
``Hibert's theorem 90''. 

From the result of the previous paragraph, $M^{\alpha} (S;C_{\cdot})$ is a fiber bundle over 
$M^{\alpha} (S_{\leq i};C_{\cdot})\times 
M^{\alpha} (S_{>i};C_{\cdot})$, locally trivial in the Zariski topology, with fiber the
centralizer $Z(R)\subset PGL_2$ of a unipotent element $R\in PGL_2$. 
This centralizer is $\gggg _a\cong \aaa^1$.
We obtain the following statement.

\begin{lemma}
\label{unicase}
Under the assumption that $G_i$ is the unipotent conjugacy class, the moduli space 
$M^{\alpha}(S;C_{\cdot})$ is a 
fiber bundle over $M^{\alpha} (S_{\leq i};C_{\cdot})\times 
M^{\alpha} (S_{>i};C_{\cdot})$, locally trivial in the Zariski topology,
with fiber $\aaa^1$. The same holds true if $G_i$ is the conjugacy class of matrices
conjugate to $-\Uu$. 
\end{lemma}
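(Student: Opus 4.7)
The plan is to parallel the proof of Lemma \ref{idcase}, now carrying along the extra datum of a unipotent automorphism of the glueing. First I would apply the factorization Theorem \ref{factorization} to the decomposition $S = S_{\leq i} \cup S_{>i}$ meeting along $\rho_i$, obtaining
$$
M^{\alpha}(S;C_{\cdot}) = M^{\alpha}(S_{\leq i};C_{\cdot}) \times_{M^{\alpha}(\rho_i)} M^{\alpha}(S_{>i};C_{\cdot}).
$$
Under Condition \ref{verygen}, the sub-sequences of conjugacy classes on each side, augmented by $G_i$ as a boundary class on the cut, satisfy Kostov genericity; hence by Proposition \ref{variety} the two factors $M^{\alpha}(S_{\leq i};C_{\cdot})$ and $M^{\alpha}(S_{>i};C_{\cdot})$ exist as smooth quasiprojective varieties.

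Next I would identify $M^{\alpha}(\rho_i) = [G_i / PGL_2]$, where $PGL_2$ acts by conjugation. For $G_i$ the conjugacy class of $\Uu$, the centralizer $Z(\Uu)$ in $PGL_2$ is the one-parameter unipotent subgroup $\gggg_a$, so $M^{\alpha}(\rho_i) \simeq B\gggg_a$. Using Proposition \ref{principal} I would reinterpret the fiber product as the relative stack of isomorphisms between the two projective frame bundles $F_{\leq i}$ and $F_{>i}$, both now equipped with unipotent automorphisms $R', R$ in the conjugacy class of $\Uu$ corresponding to the monodromy around $\rho_i$:
$$
M^{\alpha}(S;C_{\cdot}) = {\rm Iso}\bigl((p_1^{\ast}F_{\leq i}, R'),\, (p_2^{\ast}F_{>i}, R)\bigr).
$$

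The key step is Zariski-local triviality of this bundle over $M^{\alpha}(S_{\leq i};C_{\cdot}) \times M^{\alpha}(S_{>i};C_{\cdot})$. Lemma \ref{lem62} trivializes each of $F_{\leq i}$ and $F_{>i}$ Zariski locally. With respect to such trivializations, the unipotent datum becomes a section of a bundle with fiber the unipotent conjugacy class, which is $PGL_2 / \gggg_a$. To trivialize this in turn I would either write down an explicit algebraic section of $PGL_2 \to PGL_2/\gggg_a$ on a Zariski open, or simply invoke Hilbert 90 to the effect that any $\gggg_a$-torsor for the etale topology is automatically Zariski-locally trivial. After simultaneous trivialization of both $(F_{\leq i}, R')$ and $(F_{>i}, R)$, the isomorphism bundle reduces fiberwise to the relative centralizer $Z(\Uu) \cong \gggg_a \cong \aaa^1$.

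For $G_i$ the conjugacy class of $-\Uu$, nothing changes: the scalar $-1$ is central and collapses in $PGL_2$, so the centralizer is again $\gggg_a$. The one genuine obstacle is precisely the Zariski-local triviality of the $\gggg_a$-torsor carrying the unipotent automorphism; this is the specific feature that makes the fiber an affine line rather than some twisted object, and it is what will allow Corollary \ref{affine} and Proposition \ref{decomp} to absorb the unipotent strata into the dual boundary complex calculation downstream.
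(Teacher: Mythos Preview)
Your proposal is correct and follows essentially the same route as the paper: factorize along $\rho_i$, identify $M^{\alpha}(\rho_i)\simeq B\gggg_a$, pass to the isomorphism bundle of $(F_{\leq i},R')$ and $(F_{>i},R)$ via Proposition~\ref{principal}, then use Lemma~\ref{lem62} together with either explicit Zariski-local sections of $PGL_2\to PGL_2/\gggg_a$ or Hilbert~90 to trivialize the pair (bundle, unipotent automorphism), leaving the centralizer $\gggg_a\cong\aaa^1$ as the fiber. The $-\Uu$ case is handled identically, as you note.
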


We may sum up the conclusion of this section as follows.

\begin{proposition}
With the hypothesis of Condition \ref{verygen} in effect,
suppose that the datum $\alpha$ is chosen such that for some $i$,
$G_i$ is one of the following four conjugacy classes
$$
\{ 1\} , \;\; 
\{ -1\} , \;\; 
\{ P\Uu P^{-1}\} , \mbox{ or }
\{ - P \Uu P^{-1} \} ,
$$
that is to say the conjugacy classes whose traces are $2$ or $-2$. 
Then the dual boundary complex of the $\alpha$-stratum is contractible: 
$$
\dualdel M^{\alpha}(S, C_{\cdot}) \sim \ast . 
$$
\end{proposition}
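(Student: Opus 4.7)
The plan is to use Lemmas \ref{idcase} and \ref{unicase} together with Proposition \ref{decomp}. In all four cases, $M^{\alpha}(S;C_\cdot)$ is a Zariski-locally trivial fiber bundle over the smooth quasiprojective variety $B := M^{\alpha}(S_{\leq i};C_\cdot) \times M^{\alpha}(S_{>i};C_\cdot)$; the smoothness of $B$ follows from Condition \ref{verygen}, which guarantees that the induced conjugacy class data on $S_{\leq i}$ and $S_{>i}$ satisfy Condition \ref{Kgen}, combined with Proposition \ref{variety}. The fiber is $PGL_2$ in the two scalar cases $G_i \in \{\{1\},\{-1\}\}$ and $\aaa^1$ in the two unipotent cases.

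The main step is to construct a stratification of $M^{\alpha}$ to which Proposition \ref{decomp} can be applied. First I stratify $B$ into finitely many smooth locally closed pieces $V_\beta$ over each of which the bundle trivializes, with a distinguished dense open piece $V_0$; this is routine Noetherian induction exploiting Zariski-local triviality. In the unipotent case the preimages $M^{\alpha}|_{V_\beta} \cong \aaa^1 \times V_\beta$ already have the form $\aaa^1 \times Y$. In the scalar case I further subdivide each $PGL_2 \times V_\beta$ via the Bruhat decomposition $PGL_2 = (\aaa^2 \times \Gm) \sqcup (\aaa^1 \times \Gm)$; each resulting piece still has the shape $\aaa^1 \times Y$ with $Y$ smooth, and since these Bruhat choices only need to be made on disjoint pieces $V_\beta$ separately, no global compatibility issue arises.

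Next I verify $\dualdel U \sim \ast$ for $U := M^{\alpha}|_{V_0}$. In the unipotent case this is immediate from Corollary \ref{affine}. In the scalar case $U \cong PGL_2 \times V_0$; here I note that $PGL_2 \cong \pp^3 \setminus Q$, where $Q$ is the smooth quadric of singular matrices, yields a normal crossings compactification whose dual complex is a single vertex, so $\dualdel PGL_2 \sim \ast$. Lemma \ref{join} then gives $\dualdel U \simeq (\dualdel PGL_2) \ast (\dualdel V_0)$, which is a cone on $\dualdel V_0$ and hence contractible. With $U$ and the $\aaa^1 \times Y_j$ stratification of $M^{\alpha} \setminus U$ in hand, Proposition \ref{decomp} finally yields $\dualdel M^{\alpha}(S;C_\cdot) \sim \dualdel U \sim \ast$.

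The step I expect to require the most care is arranging the total order on the strata demanded by Proposition \ref{decomp}: in the scalar case, the closure of a big Bruhat cell piece over $V_\beta$ contains both big and small cell pieces over each $V_{\beta'} \subset \bar V_\beta$, so one must order all small cell pieces first (in order of decreasing depth of $V_\beta$) and then all big cell pieces (again by decreasing depth) to guarantee that $\bigcup_{j \leq a} Z_j$ is closed at every stage. Once this bookkeeping is in place, the rest of the argument is a direct assembly of results already developed in the paper.
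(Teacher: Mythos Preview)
Your argument is correct and follows the same strategy as the paper: use Lemmas \ref{idcase} and \ref{unicase} to get a Zariski-locally trivial bundle, trivialize over strata of the base, and in the scalar case break up the $PGL_2$ fiber so that every piece has the form $\aaa^1\times Y$, then invoke Proposition \ref{decomp}. The paper's proof is terse and simply asserts the existence of such a decomposition; you have supplied the details (the explicit Bruhat pieces $\aaa^2\times\Gm$ and $\aaa^1\times\Gm$, and the total ordering needed for Proposition \ref{decomp}).

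The one genuine variation is your treatment of the open stratum in the scalar case: rather than Bruhat-decomposing $PGL_2\times V_0$ as well, you keep it intact and use $\dualdel PGL_2\sim\ast$ (via the compactification $\pp^3\supset Q$) together with the join formula of Lemma \ref{join}. This is a perfectly valid shortcut and arguably cleaner. One small caution: your appeal to Proposition \ref{variety} only gives smoothness of the full moduli spaces $M(S_{\leq i};C_\cdot)$ and $M(S_{>i};C_\cdot)$, not of the $\alpha$-substacks that actually form $B$; the paper glosses over this too, and since Proposition \ref{decomp} does not require the $Y_j$ to be smooth it does not affect the decomposition argument, but smoothness and irreducibility of $M^\alpha(S;C_\cdot)$ itself (the $X$ in Proposition \ref{decomp}) deserve a word of justification in the cases where the proposition is actually applied.
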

\begin{proof}
In all four cases, covered by Lemmas \ref{idcase} and \ref{unicase} above, the space 
$M^{\alpha}(S, C_{\cdot})$ admits a further decomposition into locally closed
pieces all of which have the form $\aaa^1\times Y$. Therefore, Lemmas \ref{idcase} and \ref{unicase} 
apply to show that the dual boundary complex is contractible. 
\end{proof}

\section{Decomposition at $S_i$ in the unstable case}
\label{sec-decomp-unstable}

Define the function $t_i : M(S,C_{\cdot})\rightarrow \aaa^1$ sending a local system to 
the trace of its monodromy around the circle $\rho _i$. In the previous section, we have treated
any strata which might be defined in such a way that at least one of the $G_i$ is a conjugacy
class with $t_i$ equal to $2$ or $-2$. Therefore, we may now assume that all of our subsets $G_i$
consist entirely of matrices with trace different from $2,-2$. In particular, these matrices are
semisimple with distinct eigenvalues. 

If $G_i$ consists of a single conjugacy class, it is 
possible to choose one of the two eigenvalues. But in general, this is not possible. However, in the
situation considered in the present section, where one of the $\sigma _i$ indicates an unstable
local system, then the destabilizing subsystem serves to  pick out a choice of eigenvalue. 

In the case where one of the $\sigma _i$ is $\{ 0\}$ stating that $V|_{S_i}$ should be unstable,
we will again obtain a structure of decomposition into a product with $\aaa^1$ locally over a
stratification, essentially by considering the extension class of the unstable local system. 
Some arguments are needed in order to show that this leads to direct product decompositions. 

\subsection{Some cases with $G_{i-1}$ and $G_i$ fixed}
\label{sec-fixed}

We suppose in this subsection that $G_{i-1}$ and $G_i$ are single conjugacy classes,
with traces different from $2,-2$, and furthermore
chosen so that the moduli problem for $M^{\alpha} (S_{> i};C_{\cdot})$ on one side
is Kostov-generic. Hence, that moduli stack is a quasiprojective variety. Furthermore we assume that
$\sigma _i = \{ 0\}$. Therefore, $M^{\alpha} (S_i;C_i)$ is the moduli stack of 
unstable local systems on $S_i$. 
The elements here are local systems $V$ fitting into an exact sequence
$$
0\rightarrow L \rightarrow V \rightarrow L' \rightarrow 0
$$
such that the monodromy of $L$ on $\xi _i$ has eigenvalue $c_i^{-1}$. 
We assume that $M^{\alpha}(S_i;C_i)$ is nonempty.

\begin{remark}
If we are given the conjugacy classes $G_{i-1}$ and $G_i$ such that there
exists an unstable local system $V$ on $S_i$, then the eigenvalues
$b_{i-1}$ of $L$ on $\rho _{i-1}$, and $b_i$ of $L$ on $\rho_i$, are 
uniquely determined.
\end{remark}
\begin{proof}
The conjugacy classes $G_{i-1}$, $G_i$ determine the pairs $(b_{i-1}, b_{i-1}^{-1})$
and  $(b_{i}, b_{i}^{-1})$ respectively. The instability condition says that $L$ has
eigenvalue $c_i^{-1}$ along $\xi_i$. Suppose that $b_{i-1}c_i^{-1}b_i=1$ so there
exists a local system $L$ with eigenvalues $b_{i-1}$ and $b_i$. We show that
the other products with either $b_{i-1}^{-1}$ or $b_i^{-1}$ or both, are different from $1$.
For example, $b_{i-1}c_i^{-1}b_i^{-1} = b_i^{-2}$, but $b_i^2\neq 1$ since we are assuming
that $G_i$ is a conjugacy class with distinct eigenvalues. Thus $b_{i-1}c_i^{-1}b_i^{-1}\neq 1$.
Similarly, $b_{i-1}^{-1}c_i^{-1}b_i\neq 1$. Also, $b_{i-1}^{-1}c_i^{-1}b_i^{-1}=c_i^{-2}\neq 1$. 
This shows that if there is one possible combination of eigenvalues for a sub-local system,
then it is unique. 
\end{proof}

From the assumption that $M^{\alpha} (S_i; C_i)$ is nonempty and the previous remark, we may
denote by $b_{i-1}$ and $b_i$ the eigenvalues of $L$ on $\rho _{i-1}$ and $\rho _i$
respectively.  

We are assuming a genericity condition implying 
that $M^{\alpha} (S_{> i}; C_{\cdot})$ is a quasiprojective variety. It has a universal
principal bundle $F_{>i}$ over it, and this has an automorphism $R$ corresponding
to the monodromy transformation around $\rho _i$. The eigenvalues of $R$ are $b_i$ and $b_i^{-1}$. 

Restrict to a finer stratification of $M^{\alpha} (S_{> i}; C_{\cdot})$ 
into strata denoted $M^{\alpha} (S_{> i})^a$ on
which $(F_{>i}, R)$ is trivial. Let $M^{\alpha} (S; C_{\cdot})^a$ be the 
inverse image of $M^{\alpha} (S_{>i}; C_{\cdot})^a$ under the map 
$M^{\alpha} (S; C_{\cdot})\rightarrow M^{\alpha} (S_{>i}; C_{\cdot})$. 

\begin{proposition}
We have 
$$
M^{\alpha} (S; C_{\cdot})^a = M^{\alpha} (S_{>i}; C_{\cdot})^a 
\times M^{\alpha} (S_{\leq i}; C_{\cdot})^{{\rm fr},R}
$$
where
$M^{\alpha} (S_{\leq i}; C_{\cdot})^{{\rm fr},R}$ is the moduli space of {\em framed} local systems,
that is to say local systems with a projective framing along $\rho _i$ compatible
with the monodromy and having the specified eigenvalues $(b_i,b_i^{-1})$. 
\end{proposition}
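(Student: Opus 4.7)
The plan is to combine the factorization/principal-bundle description of the moduli stack with the triviality built into the stratum $a$.

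First I would apply Theorem \ref{factorization} to write
$$
M^{\alpha}(S;C_{\cdot}) = M^{\alpha}(S_{\leq i};C_{\cdot}) \times_{M^{\alpha}(\rho_i)} M^{\alpha}(S_{>i};C_{\cdot}),
$$
and then repackage this fiber product via Proposition \ref{principal}: choosing a basepoint $s_i\in\rho_i$, the two pullback bundles $p_1^{\ast}F_{\leq i}$ and $p_2^{\ast}F_{>i}$ are naturally equipped with their monodromy automorphisms $R'$ and $R$ (the images of the generator of $\pi_1(\rho_i)$), and
$$
M^{\alpha}(S;C_{\cdot}) \;\cong\; \mathrm{Iso}\bigl((p_1^{\ast}F_{\leq i},R'),\;(p_2^{\ast}F_{>i},R)\bigr)
$$
over $M^{\alpha}(S_{\leq i};C_{\cdot})\times M^{\alpha}(S_{>i};C_{\cdot})$, where the isomorphisms are required to intertwine the two automorphisms.

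Next I would restrict to the open stratum indexed by $a$. By construction of this stratification, over $M^{\alpha}(S_{>i};C_{\cdot})^a$ the pair $(F_{>i},R)$ is trivial, i.e.\ there is a section of the frame bundle carrying $R$ to the fixed diagonal matrix with eigenvalues $(b_i,b_i^{-1})$ (these eigenvalues are unambiguous by the previous Remark, since trace $\neq \pm 2$). Using this trivialization, an intertwining isomorphism with $(p_1^{\ast}F_{\leq i},R')$ is exactly a projective framing of $F_{\leq i}$ over $\rho_i$ in which the monodromy of $V|_{S_{\leq i}}$ around $\rho_i$ becomes that same fixed diagonal matrix. But that datum is precisely what defines a point of $M^{\alpha}(S_{\leq i};C_{\cdot})^{\mathrm{fr},R}$.

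Putting these two steps together gives a canonical identification
$$
M^{\alpha}(S;C_{\cdot})^a \;\cong\; M^{\alpha}(S_{>i};C_{\cdot})^a \times M^{\alpha}(S_{\leq i};C_{\cdot})^{\mathrm{fr},R},
$$
projecting as the product of the obvious projections. The main obstacle is the bookkeeping in the second step: one must check that the trivialization of $(F_{>i},R)$ over the stratum $a$ really does turn the intertwining condition into the fixed-eigenvalue framing condition, and that this identification is independent of the chosen trivialization up to the action already built into the definition of $M^{\alpha}(S_{\leq i};C_{\cdot})^{\mathrm{fr},R}$. Once that is verified, the projections are algebraic and the decomposition is a genuine direct product rather than merely a Zariski-local one, because the trivialization has been incorporated into the definition of the stratum $a$.
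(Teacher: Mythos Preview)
Your proposal is correct and follows exactly the approach the paper intends: the paper's own proof is the single line ``Use Proposition \ref{principal},'' and you have simply unpacked what that means---the factorization as an $\mathrm{Iso}$-space over the product, followed by the observation that triviality of $(F_{>i},R)$ on the stratum $a$ converts an intertwining isomorphism into a projective framing with the prescribed diagonal monodromy. The extra bookkeeping you flag (independence of the chosen trivialization) is harmless here since the stratum comes equipped with a fixed trivialization by definition.
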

\begin{proof}
Use Proposition \ref{principal}. 
\end{proof}

Without the conditions $\alpha = (\sigma _{\cdot}, G_{\cdot})$, 
the framed moduli space is just the space of
sequences of group elements $A_1,\ldots , A_i$,
in conjugacy classes $C_1,\ldots , C_i$ respectively, 
such that $A_1\cdots A_iR=1$. 
Denote this space by 
$$
{\rm Rep} (C_1,\ldots , C_i; R).
$$ 
The moduli space $M^{\alpha}(S_{\leq i},C_{\cdot})^{{\rm fr},R}$ is the subspace of 
${\rm Rep} (C_1,\ldots , C_i; R)$ given by the conditions $\sigma _{\cdot}$ and $G_{\cdot}$. 

Notice here that, since we don't know a genericity condition for 
$(C_1,\ldots, C_i, G_i)$ the moduli space might not be smooth. Even though we are
considering framed representations, at a reducible representation the space
will in general have a singularity. Furthermore, the conditions $G_j$ might,
in principle, introduce 
other singularities. 

\begin{theorem}
\label{abelian}
With the above notations, 
let $R'$ be an element in the conjugacy class $G_{i-1}$. 
We have 
$$
M^{\alpha} (S_{\leq i}; C_{\cdot})^{{\rm fr},R}
\cong \aaa ^1 \times M^{\alpha} (S_{\leq i-1}; C_{\cdot})^{{\rm fr},R'}.
$$
\end{theorem}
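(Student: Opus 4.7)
\emph{Proof plan.} The strategy is to combine the factorization from Theorem~\ref{factorization} along the circle $\rho_{i-1}$ with an explicit parametrization of the framed unstable locus on $S_i$. A point of $M^{\alpha}(S_{\leq i};C_{\cdot})^{{\rm fr},R}$ is a tuple $(A_1,\ldots,A_i)$ with $A_j\in C_j$ satisfying $A_1\cdots A_i R = 1$ and the conditions imposed by $\sigma_{\cdot}$ and $G_{\cdot}$. I would cut this datum at $\rho_{i-1}$: $A_i$ describes a framed unstable system on $S_i$, and $(A_1,\ldots,A_{i-1})$ a framed system on $S_{\leq i-1}$, with a matching condition that the induced monodromy at $\rho_{i-1}$ agrees on both sides.

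The first step is to parametrize the $A_i$ factor. Working in a basis where $R=\operatorname{diag}(b_i,b_i^{-1})$, the Remark identifies the destabilizing line uniquely as the $b_i$-eigenline of $R$, i.e.\ the span of the first basis vector (the non-emptiness assumption on $M^{\alpha}(S_i;C_i)$ is what pins down the eigenvalues $b_{i-1},b_i$). The constraints $A_i\in C_i$ and preservation of this line with eigenvalue $c_i^{-1}$ force
\[
A_i(\lambda)=\begin{pmatrix} c_i^{-1} & \lambda \\ 0 & c_i \end{pmatrix},\qquad \lambda\in\aaa^1.
\]
Using $b_{i-1}=c_i b_i^{-1}$ (from the relation $b_{i-1}c_i^{-1}b_i=1$ of the Remark), a direct matrix calculation then gives
\[
R''(\lambda):=A_1\cdots A_{i-1}=R^{-1}A_i(\lambda)^{-1}=\begin{pmatrix} b_{i-1} & -\lambda\, b_i^{-1} \\ 0 & b_{i-1}^{-1} \end{pmatrix},
\]
which lies in $G_{i-1}$ automatically.

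Fix the representative $R'=\operatorname{diag}(b_{i-1},b_{i-1}^{-1})$ of $G_{i-1}$. Condition~\ref{verygen} guarantees $b_{i-1}^2\neq 1$, so one can solve uniquely for the unipotent element
\[
P(\lambda)=\begin{pmatrix} 1 & \mu(\lambda) \\ 0 & 1 \end{pmatrix},\qquad \mu(\lambda)=\frac{\lambda\, b_i^{-1}}{b_{i-1}-b_{i-1}^{-1}},
\]
satisfying $P(\lambda)\,R'\,P(\lambda)^{-1}=R''(\lambda)$. The desired isomorphism is then
\[
\bigl(\lambda,(B_1,\ldots,B_{i-1})\bigr)\longmapsto\bigl(P(\lambda)B_1P(\lambda)^{-1},\ldots,P(\lambda)B_{i-1}P(\lambda)^{-1},A_i(\lambda)\bigr),
\]
whose inverse reads $\lambda$ off as the upper-right entry of $A_i$ and conjugates back by $P(\lambda)^{-1}$. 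Conjugation-invariance of the conditions $\sigma_j$ and $G_j$ guarantees they are preserved in both directions. The main obstacle is really just bookkeeping—the orientation conventions on $\rho_{i-1}$, the exact form of the product relation, and checking that the map is a morphism of schemes (which is clear since $P(\lambda)$ depends polynomially on $\lambda$). The essential mathematical content is packaged into the Remark (giving a canonical destabilizing line via the uniquely determined eigenvalues) and into very-genericity, which makes $b_{i-1}-b_{i-1}^{-1}$ invertible so that $P(\lambda)$ exists.
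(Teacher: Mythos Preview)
Your approach is essentially identical to the paper's proof: parametrize $A_i$ by its single free off-diagonal entry, then conjugate $(A_1,\ldots,A_{i-1})$ by the unique unipotent (your $P(\lambda)$, the paper's $U$) that carries the resulting product to the fixed diagonal representative $R'$; the paper just uses lower-triangular matrices where you use upper-triangular ones. One bookkeeping slip to fix (you anticipated this): with $P R' P^{-1}=R''$ and $B_1\cdots B_{i-1}=(R')^{-1}$, your map yields $A_1\cdots A_{i-1}=P(R')^{-1}P^{-1}=R''(\lambda)^{-1}$ rather than $R''(\lambda)$, so the product relation fails---solve instead for $P$ with $P^{-1}R'P=R''$ (equivalently, replace $\mu$ by $-\mu$).
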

\begin{proof}
It isn't too hard to see that the moduli space is an $\aaa^1$-bundle over
the second term on the right hand side, where the $\aaa^1$-coordinate is the extension
class. The statement that we would like to show, 
saying that there is a natural decomposition as a direct product, is a 
sort of commutativity property. 

Let ${\rm Rep} (C_1,\ldots , C_i; R)^u$ denote the subspace of
${\rm Rep} (C_1,\ldots , C_i; R)$ consisting of representations which are unstable on $S_i$.
This is equivalent to saying that $A_i$ fixes, and acts by $c_i^{-1}$ on 
the eigenvector of $R$ of eigenvalue $b_i$.
We will show an isomorphism 
$$
{\rm Rep} (C_1,\ldots , C_i; R)^u\cong \aaa^1\times {\rm Rep} (C_1,\ldots , C_{i-1}; R'),
$$
and this isomorphism will preserve the conditions $(\sigma _{\cdot}, G_{\cdot})$ over
$S_{i-1}$ so it restricts to an isomorphism between the moduli spaces as claimed in the
theorem. 

Write
$$
R = 
\left( \begin{array}{cc}
b_i^{-1} & 0 \\
0 & b_i
\end{array}
\right) .
$$
Then 
${\rm Rep} (C_1,\ldots , C_i; R)^u$ is the space of sequences $(A_1,\ldots , A_i)$
such that 
$$
A_1\cdots A_iR=1
$$
and 
\begin{equation}
\label{aidef}
A_i = 
\left( \begin{array}{cc}
c_i & 0 \\
y & c_i^{-1}
\end{array}
\right)
\end{equation}
for some $y\in \aaa^1$. 

Similarly, write 
$$
R' = 
\left( \begin{array}{cc}
b_{i-1}^{-1} & 0 \\
0 & b_{i-1}
\end{array}
\right) ,
$$
and ${\rm Rep} (C_1,\ldots , C_{i-1}; R')$ is the space of sequences $(A'_1,\ldots , A'_{i-1})$
such that 
$$
A'_1\cdots A'_{i-1} R' =1.
$$

Suppose $(A_1,\ldots , A_i)$ is a point in ${\rm Rep} (C_1,\ldots , C_i; R)^u$
and let $y\in \aaa^1$ be the lower left coefficient of $A_i$ from \eqref{aidef}. 
Note that $c_i^{-1}b_i =b_{i-1}$ so
$$
A_i R = \left( \begin{array}{cc}
b_i^{-1}c_i & 0 \\
b_i^{-1}y & c_i^{-1}b_i
\end{array}
\right)
=
\left( \begin{array}{cc}
b_{i-1}^{-1} & 0 \\
b_i^{-1}y & b_{i-1}
\end{array}
\right) .
$$
Let
$$
U:= \left( \begin{array}{cc}
1 & 0 \\
u & 1
\end{array}
\right)
$$
be chosen so that $UA_i R U^{-1} = R'$, which happens if and only if
$$
b_{i-1}^{-1} u + b_i ^{-1} y - b_{i-1} u = 0,
$$
in  other words
$$
u:= \frac{-b_i^{-1}y}{b_{i-1}^{-1}-b_{i-1}}.
$$
The denominator is nonzero because we are assuming the trace of $G_{i-1}$ is different
from $2$ or $-2$, which is equivalent to asking $b_{i-1}\neq b_{i-1}^{-1}$. 

Then put $A'_j:= UA_jU^{-1}$. From the equation $UA_i R U^{-1} = R'$ we get
$$
A'_1\cdots A'_{i-1}R' = U (A_1\cdots A_{i-1})U^{-1}(UA_i R U^{-1}) = 1.
$$
Hence, $(y, (A'_1,\ldots , A'_{i-1}))$ is a point in 
$\aaa^1\times {\rm Rep} (C_1,\ldots , C_{i-1}; R')$. 
This defines the map 
$$
{\rm Rep} (C_1,\ldots , C_i; R)^u\rightarrow \aaa^1\times {\rm Rep} (C_1,\ldots , C_{i-1}; R'),
$$
Its inverse is obtained by mapping $(y, (A'_1,\ldots , A'_{i-1}))$
to $(A_1,\ldots , A_i)$ where for $1\leq j \leq i-1$ we put 
$A_j = U^{-1} A'_j U$ with $U$ defined as above using $y$,
and $A_i$ is the upper triangular matrix \eqref{aidef}. We obtain the claimed isomorphism.
\end{proof}

By symmetry the same holds in case of Kostov-genericity on the other side, giving
a statement written as
$$
M^{\alpha} (S_{\geq i-1}; C_{\cdot})^{{\rm fr},R'}
\cong \aaa ^1 \times M^{\alpha} (S_{\geq i}; C_{\cdot})^{{\rm fr},R}.
$$

\subsection{Open $G_{i-1}$ and $G_i$}
\label{sec-variable}

If $\sigma _i=0$ and the moduli space is nonempty, then we cannot have both sides
being Kostov-nongeneric at once. 

Therefore, the remaining case is when $G_{i-1}$ and
$G_i$ are open sets which are 
unions of all but finitely many conjugacy classes (that is to say, allowing all traces but a finite number),
such that the moduli problems on both $S_{<i}$ and $S_{>i}$ are Kostov-generic.
In this situation, which we now assume,
the moduli spaces $M^{\alpha} (S_{<i}; C_{\cdot})$ and 
$M^{\alpha} (S_{>i}; C_{\cdot})$ exist and have principal bundles
$F_{<i}$ and $F_{>i}$ respectively. 

We have a map 
$$
M^{\alpha} (S_{<i}; C_{\cdot})\times M^{\alpha} (S_{>i}; C_{\cdot})
\rightarrow 
G_{i-1}\times G_i.
$$
Consider the etale covering space $\widetilde{G_i}$ 
which parametrizes matrices with a choice of one of the two eigenspaces.
This was considered extensively by Kabaya \cite{Kabaya}. 
Let 
$$
\widetilde{M}^{\alpha} (S_{>i}; C_{\cdot}):= 
M^{\alpha} (S_{>i}; C_{\cdot})\times _{G_i}\widetilde{G_i}
$$
and similarly for $\widetilde{M}^{\alpha} (S_{<i}; C_{\cdot})$.

Our hypothesis that $\sigma _i = \{ 0\}$, in other words that for any
local system $V$ in $M^{\alpha}(S;C_{\cdot})$ the restriction is unstable,
provides a factorization of the projection map through
$$
M^{\alpha}(S;C_{\cdot})\rightarrow 
\widetilde{M}^{\alpha} (S_{<i}; C_{\cdot})\times 
\widetilde{M}^{\alpha} (S_{>i}; C_{\cdot}).
$$
Indeed the destabilizing rank one subsystem is uniquely determined by the condition that
the monodromy around $\xi_i$ have eigenvalue $c_i^{-1}$, and this rank one subsystem
serves to pick out the eigenvalues of the matrices for $\rho _{i-1}$ and $\rho _i$. 

Now the same argument as before goes through. 
We may choose a stratification such that on each stratum the principal bundles have
framings such that the automorphisms $R'$ and $R$ are diagonal (note, however, that the
eigenvalues are now variable). 

We reduce to the following situation: $Z$ is a quasiprojective variety with
invertible functions $b_{i-1}$ and $b_i$ such that $b_{i_1}^{-1}c_i^{-1}b_i=1$, 
and we look at the moduli space of quadruples
$(z,V_i,\beta ',\beta )$ such that $z\in Z$, $V_i$ is an unstable local system on $S_i$, 
and 
$$
\beta ': V|_{\rho _{i-1}} \cong (V,R'(z)),
$$
$$
\beta : V|_{\rho _{i}} \cong (V,R(z))
$$
where 
$$
R'(z)= \left( \begin{array}{cc}
b_{i-1}(z)^{-1} & 0 \\
0 & b_{i-1}(z)
\end{array}
\right)
\;\;\;\;
R(z)= \left( \begin{array}{cc}
b_{i}(z)^{-1} & 0 \\
0 & b_{i}(z)
\end{array}
\right) .
$$
The map $Y=\beta ' \beta ^{-1}$ is an automorphism of $V$ (defined up to scalars, so it is a group element in $PGL_2$) and it preserves the marked subspace, so it is a lower-triangular matrix. It uniquely
determines the data $(V_i, \beta , \beta ')$ up to isomorphism. Indeed we may consider 
$V_i\cong V$ using for example $\beta '$, then our local system is 
$(R', A_i, YRY^{-1})$ where $A_i$ is specified by the condition $(R')^{-1}A_iYRY^{-1}=1$. 
As the group of lower triangular matrices in $PGL_2$ is isomorphic to $\Gm \times \gggg _a$
we obtain an isomorphism between our stratum and $Z\times \Gm \times \gggg _a$. 

Alternatively, one could just do a parametrized version of the proof of Theorem \ref{abelian}.

\subsection{Synthesis}
\label{sec-synth}

We may gather together the various cases that have been treated in this section so far. 

\begin{theorem}
Suppose $\alpha$ is any datum such that for some $i$ we have $\sigma _i = \{ 0\}$.
If $M^{\alpha}(S; C_{\cdot})$ is nonempty, then 
$\dualdel M^{\alpha}(S; C_{\cdot})\sim \ast$. 
\end{theorem}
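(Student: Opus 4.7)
The plan is to show that $M^{\alpha}(S;C_{\cdot})$ admits a finite decomposition by locally closed subsets, each of the form $\aaa^1 \times Y_j$, which can be ordered into a stratification. Once this is established, Corollary \ref{affine} gives $\dualdel(\aaa^1\times Y_j)\sim \ast$ for each piece, and Proposition \ref{decomp} (applied with $U$ taken to be the top open stratum, which is itself of the form $\aaa^1 \times Y$) will yield $\dualdel M^{\alpha}(S;C_{\cdot})\sim \dualdel U \sim \ast$ by Corollary \ref{affine} once more.

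First I would dispose of the easy cases: if some $G_j$ with $j\neq i$ has trace $2$ or $-2$, the proposition at the end of Section \ref{sec-splitting} already gives contractibility, so I may assume every $G_j$ consists of matrices with trace different from $\pm 2$. Then I would apply Theorem \ref{factorization} to split $M^{\alpha}(S;C_{\cdot})$ along $\rho_{i-1}$ and $\rho_i$. The nonemptiness of $M^{\alpha}(S;C_{\cdot})$ together with the hypothesis $\sigma_i=\{0\}$ and Condition \ref{verygen} forces, as explained at the start of Section \ref{sec-variable}, at least one of $S_{<i}$ or $S_{>i}$ to carry a Kostov-generic moduli problem. Without loss of generality I take it to be $S_{>i}$, so that $M^{\alpha}(S_{>i};C_{\cdot})$ is a quasiprojective variety equipped with its universal principal $PGL_2$-bundle $F_{>i}$ and monodromy automorphism $R$ around $\rho_i$.

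The heart of the argument is then to combine Proposition \ref{principal} with the explicit decompositions already available: Theorem \ref{abelian} in the case where $G_{i-1}$ and $G_i$ are single conjugacy classes, and its parametrized variant from Section \ref{sec-variable} in the case of open $G_{i-1}, G_i$. In each situation, after refining with a stratification of $M^{\alpha}(S_{>i};C_{\cdot})$ over which $(F_{>i},R)$ becomes trivial and the eigenvalues $b_{i-1}, b_i$ become constant, the preimage in $M^{\alpha}(S;C_{\cdot})$ acquires a direct-product decomposition containing an $\aaa^1$ factor, namely the extension class of the unstable restriction $V|_{S_i}$. In the variable case one gets $Z\times \Gm\times \gggg_a$, which still contains an $\aaa^1 \cong \gggg_a$ factor. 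Refining each base $Y_j$ by its own smooth stratification and ordering by dimension produces the required ordered stratification of $M^{\alpha}(S;C_{\cdot})$ into pieces of the form $\aaa^1\times(\text{smooth})$.

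The main technical obstacle will be ensuring that the decomposition is genuinely ordered into a closed-inclusion filtration --- the \emph{caution} example following Proposition \ref{decomp} shows that piecewise $\aaa^1$-factors alone are not sufficient. This requires checking that the Zariski-local trivializations of the principal $PGL_2$-bundles (Lemma \ref{lem62}) and the $U$-conjugation of Theorem \ref{abelian} glue into algebraic morphisms on each stratum, and that the strata organize into a filtration by Zariski-closed subsets. Once this is arranged, iterated application of Proposition \ref{decomp} removes strata one at a time without changing the homotopy type of the dual boundary complex, ending at a single open stratum of the form $\aaa^1\times Y$ whose dual complex is contractible by Corollary \ref{affine}.
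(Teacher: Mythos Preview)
Your proposal follows the same strategy as the paper: reduce to the case where all $G_j$ avoid trace $\pm 2$ via Section \ref{sec-splitting}, then use Theorem \ref{abelian} and the parametrized version in Section \ref{sec-variable} to produce $\aaa^1$-factors on strata, and finish with Proposition \ref{decomp} and Corollary \ref{affine}.

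There is one genuine imprecision. You assert that nonemptiness plus $\sigma_i=\{0\}$ plus Condition \ref{verygen} forces at least one of $S_{<i}$ or $S_{>i}$ to carry a Kostov-generic moduli problem, and then write ``without loss of generality I take it to be $S_{>i}$''. But Kostov-genericity on $S_{>i}$ depends on the specific conjugacy class at $\rho_i$; when $G_i$ is not a single conjugacy class, which side is generic (or whether both are) can vary as the point moves in $M^{\alpha}(S;C_\cdot)$, so no global ``WLOG'' is available. The paper handles this by observing that only finitely many conjugacy classes in $G^v$ render one side nongeneric; it peels those off as closed substrata (each then handled by Theorem \ref{abelian} or its mirror image), and on the remaining open part of $G_{i-1}\times G_i$ \emph{both} sides are Kostov-generic, so Section \ref{sec-variable} applies uniformly. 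You have all the pieces for this but skip the organizing step that makes the case split legitimate and, crucially, makes the resulting decomposition an honest ordered stratification of the kind Proposition \ref{decomp} requires.

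A small indexing remark: in your first reduction you write ``some $G_j$ with $j\neq i$'', but the index $i$ in $\sigma_i$ labels the piece $S_i$, whose boundary circles are $\rho_{i-1}$ and $\rho_i$; there is no reason to exclude $G_i$ itself from the trace-$\pm 2$ reduction.
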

\begin{proof}
Let $G^v$ be the set of matrices $R$ with ${\rm Tr}(R)\neq 2,-2$. In the previous section
we have treated the cases where any $G_i$ is one of the four conjugacy classes of trace $2$ or $-2$.
Therefore we may assume that $G_{i-1}, G_i\subset G^v$. 

Suppose that $G_{i-1}$ and $G_i$ are 
conjugacy classes chosen so that the sequences $(C_1,\ldots , C_{i-1}, G_{i-1})$ 
and $(G_i, C_{i+1}, \ldots , C_k)$ are both Kostov-nongeneric. 
Under the hypothesis $\sigma _i=\{ 0\}$ and supposing $M^{\alpha}(S; C_{\cdot})$ nonempty,
containing say a local system $V$,
then an eigenvalue of $G_{i-1}$ is the product of an eigenvalue of $C_i$ and an eigenvalue
of $G_i$, since there exists a rank one subsystem of $V|_{S_i}$. The same holds for the other
eigenvalue of $G_{i-1}$. Combining with the nongenericity relations among eigenvalues of
$(C_1,\ldots , C_{i-1}, G_{i-1})$ 
and $(G_i, C_{i+1}, \ldots , C_k)$, we obtain a nongenericity relation for $(C_1,\ldots , C_k)$. 
This contradicts the hypothesis of Condition \ref{Kgen} for $C_{\cdot}$. Therefore, we conclude that
if $M^{\alpha}(S; C_{\cdot})$ is nonempty, then for any specific choice of conjugacy classes
$G_{i-1}$ and $G_i$, at least one of the  moduli problems over $S_{<i}$ or $S_{>i}$ 
has to satisfy Condition \ref{Kgen}.
These cases are then covered by Theorem \ref{abelian} above. 

There are finitely many choices of single conjugacy classes $G_{i-1}$ (resp. $G_i$) such that 
$(C_1,\ldots , C_{i-1}, G_{i-1})$ 
(resp. $(G_i, C_{i+1}, \ldots , C_k)$) is Kostov non-generic. We may therefore isolate these choices
and treat them by Theorem \ref{abelian}  according to the previous paragraph. Let now $G_{i-1}$ and $G_i$ be
the complement in $G^v$ of these nongeneric conjugacy classes. These are open subsets such that
for any conjugacy classes therein, the moduli problems on $S_{<i}$ and $S_{>i}$ satisfy
Condition \ref{Kgen}. The discussion of subsection \ref{sec-variable} now applies to give the
conclusion that this part of $M^{\alpha}(S, C_{\cdot})$ has contractible dual boundary complex. 
\end{proof}

\section{Reduction to $M'$}
\label{sec-red}

In this section, we put together the results of the previous sections to obtain a reduction 
to the main biggest open stratum. Recall from Condition \ref{verygen}
that we are assuming that $C_{\cdot}$ is very
generic.  

Let the datum $\alpha '$ consist of the following choices: 
for all $i$, $\sigma '_i=\{ 1\}$
and $G_i$ is the set $G^v$ of matrices with trace $\neq 2,-2$. 
Then we put
$$
M':= M^{\alpha '} (S, C_{\cdot}) .
$$
It is an open subset of $M(S,C_{\cdot})$ since stability, and the conditions on the traces, are open
conditions. 

\begin{theorem}
There exists a collection of data denoted $\alpha ^j$ such that 
$$
M(S, C_{\cdot}) = M' \sqcup \coprod _j M^{\alpha ^j} (S, C_{\cdot})
$$
is a stratification, i.e. a decomposition into locally closed subsets admitting a total order 
satisfying the closedness condition of \ref{decomp}. Furthermore, this admits a
further refinement into a stratification with $M'$ together with pieces denoted
$Z^{j,a}\subset M^{\alpha ^j} (S, C_{\cdot})$, 
such that all of the pieces $Z^{j,a}$ have the form 
$$
Z^{j,a} = Y^{j,a} \times \aaa ^1.
$$
\end{theorem}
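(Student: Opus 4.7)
The plan is to construct the stratification in two stages by classifying the points of $M(S,C_\cdot)\setminus M'$ by a finite combinatorial invariant that records exactly how a local system fails the open conditions defining $M'$, and then to apply the structural results from Sections \ref{sec-splitting} and \ref{sec-decomp-unstable} to each bad stratum.

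First I would define, for each $i\in\{2,\ldots,k-2\}$, a symbol $\tau_i$ valued in the six-element set consisting of the four special conjugacy classes of trace $\pm2$ (namely $\{1\}$, $\{-1\}$, the nontrivial unipotent class, and its negative), together with the symbol $\ast$ meaning ``trace $\neq\pm2$''; and for each $i\in\{2,\ldots,k-1\}$ a symbol $s_i\in\{0,1\}$ recording whether $V|_{S_i}$ is unstable or stable. The pair $(\tau_\cdot,s_\cdot)$ defines a finite locally closed decomposition of $M(S,C_\cdot)$, whose only open stratum (the one with $\tau_i=\ast$ and $s_i=1$ for all $i$) is $M'$. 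Each other combination $\alpha^j=(\sigma^j_\cdot,G^j_\cdot)$ has $\sigma^j_i=\{s_i\}$ and $G^j_i$ equal to $G^v$ when $\tau_i=\ast$ or to the specified trace-$\pm2$ conjugacy class otherwise. I would order the $\alpha^j$ lexicographically by the number of specialized coordinates (more specialization first), breaking ties arbitrarily; the closure of any stratum is contained in the union of strata with at least as much specialization, so Proposition \ref{decomp}'s closedness condition is satisfied.

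For the refinement of each $M^{\alpha^j}$, I would split into two cases by whether any $G^j_i$ is one of the four trace-$\pm2$ conjugacy classes. If so, choose the least such $i$; the proposition at the end of Section \ref{sec-splitting} (via Lemmas \ref{idcase} and \ref{unicase}) expresses $M^{\alpha^j}$ as a Zariski-locally trivial fiber bundle with fiber either $PGL_2$ or $\aaa^1$ over a product of moduli spaces for smaller surfaces. Refining the base by a finite stratification over which the bundle trivializes, and in the $PGL_2$ case further applying the Bruhat decomposition $PGL_2=(\Gm\times\aaa^1)\sqcup(\Gm\times\aaa^1\times\aaa^1)$ — each cell having a free $\aaa^1$ factor — yields a stratification of $M^{\alpha^j}$ by pieces of the form $Y^{j,a}\times\aaa^1$. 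In the remaining case, all $G^j_i$ lie in $G^v$, and some $\sigma^j_i=\{0\}$; here the theorem at the end of Section \ref{sec-decomp-unstable} (synthesizing Theorem \ref{abelian} and the variable-conjugacy discussion of Subsection \ref{sec-variable}) directly provides the required stratified $\aaa^1$-product structure. Concatenating these refinements with the first-stage ordering of the $\alpha^j$, placing $M'$ last, gives the desired global stratification.

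The main obstacle I anticipate is bookkeeping: ensuring simultaneously that both levels of stratification satisfy the closedness condition in Proposition \ref{decomp}, and that the further refinements inside each $M^{\alpha^j}$ do not disturb the partial order among different $\alpha^j$'s (so that the closure of a piece $Z^{j,a}$ meets only pieces with smaller or equal index). This can be resolved by ordering so that all pieces of a given $M^{\alpha^j}$ precede every piece of any $M^{\alpha^{j'}}$ with $j'$ higher in the first-stage order, and inside a single $M^{\alpha^j}$ arranging the refinement strata by dimension of closure. The structural results of the previous sections then directly supply the product-with-$\aaa^1$ form of each $Z^{j,a}$, completing the proof.
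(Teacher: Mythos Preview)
Your proposal is essentially correct and follows the same strategy as the paper: define a finite combinatorial stratification by stability and trace data, order it so that closures go downward, and then invoke Sections~\ref{sec-splitting} and~\ref{sec-decomp-unstable} to obtain the $\aaa^1$-product form on each non-open stratum. The paper's own proof differs only in that it uses a coarser first-level decomposition, taking $G_i\in\{G^u,G^v\}$ (just ``trace $\in\{\pm2\}$'' versus ``trace $\notin\{\pm2\}$'') rather than your five-way split of the $\tau_i$, and orders by the product partial order $\{0\}<\{1\}$, $G^u<G^v$; the finer splitting into the four individual conjugacy classes is deferred to the refinement stage.

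One small slip to fix: ``breaking ties arbitrarily'' is not quite safe in your finer scheme, because among strata with the same specialization count there can still be closure relations---the identity (resp.\ $-1$) class lies in the closure of the nontrivial unipotent (resp.\ negative unipotent) class. You must place $\{1\}$ and $\{-1\}$ below the corresponding unipotent classes in the order. The paper's coarser $G^u/G^v$ dichotomy sidesteps this issue entirely, which is one reason to prefer it; alternatively, simply refine your tie-breaking rule to respect these secondary specializations (e.g.\ order by dimension of closure, as you already suggest for the second-level refinement).
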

\begin{proof}
Let $G^v$ be the set of matrices of trace $\neq 2,-2$ and let $G^u$ be the set of matrices
of trace $2$ or $-2$. Let $\alpha ^j$ run over the $2^{2k-3}$ choices of 
$(\sigma _2,\ldots , \sigma _{k-1}; G_2,\ldots , G_{k-2})$ with $\sigma _i$ either
$\{ 0\}$ or $\{ 1\}$, and $G_i$ either $G^u$ or $G^v$.  
The locally closed pieces $M^{\alpha ^j} (S, C_{\cdot})$
are disjoint and their union is $M(S, C_{\cdot})$. Furthermore, the set of indices is partially
ordered with the product order induced by saying that $\{ 0\} < \{ 1\}$ and $G^u < G^v$
and $j_1\leq j_2$ if each component of $\alpha ^{j_1}$ is $\leq$ the corresponding component of
$\alpha ^{j_2}$.  
If $J$ is a downward cone in this partial ordering then 
$\bigcup _{j\in J} M^{\alpha ^j} (S, C_{\cdot})$ is closed, because specialization
decreases the indices (stable specializes to unstable and $G^v$ specializes to $G^u$). 
Choosing a compatible total ordering
we obtain the required closedness property. 

The highest element in the partial ordering is the datum 
$\alpha '$ considered above, so $M'$ is the open stratum of the stratification. 

The discussion of the previous two sections allows us to further decompose all of the
other strata $M^{\alpha ^j} (S, C_{\cdot})$, 
in a way which again preserves the ordered closedness condition, into pieces 
of the form $Z^{j,a} = Y^{j,a} \times \aaa ^1$. 
\end{proof}

\begin{corollary}
\label{cor92}
The natural map $\dualdel M(S,C_{\cdot}) \rightarrow \dualdel M'$
is a homotopy equivalence.
\end{corollary}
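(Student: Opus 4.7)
The plan is to apply Proposition \ref{decomp} directly, taking $X = M(S, C_{\cdot})$ and $U = M'$. By Proposition \ref{variety}, the ambient space $M(S, C_{\cdot})$ is a smooth irreducible quasiprojective variety, and by construction $M'$ is a nonempty open subvariety (nonempty because the generic local system, being irreducible, satisfies the stability hypothesis on each $S_i$ and has monodromies around the $\rho_i$ with trace $\neq 2, -2$). So all that remains is to produce a stratification of the complement $M(S, C_{\cdot}) - M'$ by locally closed pieces of the form $\aaa^1 \times Y$, together with a total order satisfying the closedness condition.

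But this is precisely what the theorem established immediately above provides. Indeed, that theorem gives a stratification $M(S, C_{\cdot}) = M' \sqcup \coprod_j M^{\alpha^j}(S, C_{\cdot})$, together with a refinement of each non-open stratum $M^{\alpha^j}(S, C_{\cdot})$ into locally closed pieces $Z^{j,a} = Y^{j,a} \times \aaa^1$, and the whole collection admits a total ordering satisfying the closedness condition of Proposition \ref{decomp}. In particular, the union of pieces $\{Z^{j,a}\}$ over all $j, a$ constitutes an ordered stratification of the complement $M(S,C_{\cdot}) - M'$ whose strata are all of the product form with an $\aaa^1$ factor.

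Plugging this stratification into Proposition \ref{decomp} with $U = M'$, we conclude directly that the natural map $\dualdel M(S, C_{\cdot}) \to \dualdel M'$ is a homotopy equivalence. There is no real obstacle here, since the work has been done in the theorem above: both the decomposition into $\aaa^1$-products and the existence of a compatible total order have already been verified. The only small point to be careful about is that the map in question is the one coming from the open inclusion $M' \hookrightarrow M(S, C_{\cdot})$, which is the natural map $\dualdel X \to \dualdel U$ described in Section \ref{sec-dualdel}; Proposition \ref{decomp} is formulated precisely for this map (its proof proceeds by iteratively applying Lemma \ref{negligeable} to successively remove the strata $Z^{j,a}$, which give contractible dual boundary complexes by Corollary \ref{affine}).
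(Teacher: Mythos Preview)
Your proof is correct and follows essentially the same approach as the paper: apply Proposition~\ref{decomp} to the stratification furnished by the preceding theorem, after checking that $M(S,C_{\cdot})$ is smooth irreducible and $M'$ is a nonempty open subset. The paper's own proof is a two-line version of exactly this argument.
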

\begin{proof}
Apply Proposition \ref{decomp} to the stratification given by the theorem.
Note that $M'$ is nonempty and the full moduli space
is irreducible so the other strata are subvarieties of strictly smaller dimension. 
\end{proof}

\section{Fenchel-Nielsen coordinates}
\label{sec-main}

We are now reduced to the main case 
$M'= M^{\alpha}(S; C_{\cdot})$ for $\alpha '$ such that all $\sigma _i = \{ 1\}$ and all $G_i=G^v$. 
We would like to get
an expression for $M'$ allowing us to understand its dual boundary complex 
by inspection. We will show $M'\cong {\bf Q}^{k-3}$ where ${\bf Q}$ is defined near the
end of this section,
such that $\dualdel ({\bf Q}) \sim S^1$. The conclusion $\dualdel M' \sim S^{2(k-3)-1}$
then follows from Lemma \ref{join}. 

This product decomposition is a system of Fenchel-Nielsen coordinates for the
open subset $M'$ of the moduli space. 

One of the main things we learn from the basic theory of the classical hypergeometric function 
is that a rank two local system 
on $\pp^1 - \{ 0,1,\infty\}$ is heuristically determined by the three conjugacy classes of the 
monodromy transformations at the punctures. This general principle is not actually true, in cases
where there might be a reducible local system. But, imposing the condition of stability
provides a context in which this rigidity holds precisely. This is the statement of 
Corollary \ref{hyperge} below. 

Let $t_{i-1}$ and $t_i$ be points in $\aaa ^1- \{ 2,-2\}$. We will write down a stable
local system $V_i(t_{i-1}, t_i)$ on $S_i$, whose monodromy traces around $\rho _{i-1}$ and $\rho _i$
are $t_{i-1}$ and $t_i$ respectively, and whose monodromy around $\xi _i$ is in the
conjugacy class $C_i$. Furthermore, any stable local system with these traces 
is isomorphic to $V_i(t_{i-1}, t_i)$ in a unique way up to scalars. 

Construct $V_i(t_{i-1}, t_i)$ together with a basis at the basepoint $x_i$, by exhibiting monodromy
matrices $R'_{i-1}$, $R_i$ and $A_i$ in $SL_2$.  Set
$$
A_i := \left( \begin{array}{cc}
c_i & 0 \\
0 & c_i^{-1}
\end{array}
\right) 
\mbox{  and  }
R_i := \left( \begin{array}{cc}
u_i & 1 \\
w_i & (t_i-u_i)
\end{array}
\right) 
$$
with $u_i$ given by the formula \eqref{uiform} to be determined below, and $w_i:= u_i(t_i-u_i)-1$
because of the determinant one condition. 
 
We could just write down the formula for $u_i$ but in order to motivate it let us first calculate
$$
R'_{i-1}= A_iR_i = 
\left( \begin{array}{cc}
c_iu_i & c_i \\
c_i^{-1}w_i & c_i^{-1}(t_i-u_i)
\end{array}
\right) .
$$
We need to choose $u_i$ such that 
$$
t_{i-1} = {\rm Tr}(R'_{i-1}) = c_i u_i + c_i^{-1}(t_i-u_i).
$$
This gives the formula
\begin{equation}
\label{uiform}
u_i= \frac{t_{i-1}-c_i^{-1}t_i}{c_i-c_i^{-1}} .
\end{equation}
The denominator is nonzero since by hypothesis $c_i\neq c_i^{-1}$. 

\begin{lemma}
Suppose $V_i$ is an $SL_2$ local system with traces $t_{i-1}$ and $t_i$.
Suppose $V_i$ is given a frame at the base point $x_i$,
such that the monodromy matrix around the loop $\gamma _i$ is
diagonal with $c_i$ in the upper left, and such that the monodromy matrix
around $\rho _i$ (via the path going from $x_i$ to $s_i\in \rho _i$) 
has a $1$ in the upper right corner. Then the three monodromy
matrices of $V_i$ are the matrices $R'_{i-1}$, $R_i$ and $A_i$ defined above. 
\end{lemma}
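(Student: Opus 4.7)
The plan is to show that the three conditions given (conjugacy class of the monodromy around $\xi_i$, diagonal frame with $c_i$ upper left, upper-right entry $1$ for the $\rho_i$-monodromy) together with the trace conditions on $\rho_{i-1}$ and $\rho_i$ pin down the three monodromy matrices uniquely. I would proceed in three short steps corresponding to the three boundary circles of $S_i$.

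First I would identify the monodromy of $V_i$ around $\gamma_i$. Since $V_i$ satisfies the conjugacy class condition at $\xi_i$, this matrix has eigenvalues $c_i, c_i^{-1}$; under the hypothesized framing it is diagonal with $c_i$ in the upper-left, so it is forced to equal $A_i$. Next I would use the hypotheses on the $\rho_i$-monodromy to write it as
$$
\left(
\begin{array}{cc}
u_i & 1 \\
w_i & v_i
\end{array}
\right)
$$
for some entries, with $v_i = t_i - u_i$ (from the trace condition ${\rm Tr} = t_i$) and $w_i = u_i(t_i - u_i) - 1$ (from the determinant one condition). This is exactly the matrix $R_i$ with $u_i$ still a free parameter.

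For the third step, I would invoke the relation in $\pi_1(S_i)$: with the chosen paths and orientations (as described in Section \ref{sec-interior}), the loops around the three boundary circles satisfy a product-one relation, which translates into $R'_{i-1} = A_i R_i$. Computing this product gives a matrix whose trace is $c_i u_i + c_i^{-1}(t_i - u_i)$; setting this equal to $t_{i-1}$ yields a single linear equation for $u_i$, solvable because $c_i \neq c_i^{-1}$ by hypothesis, and the solution is precisely formula \eqref{uiform}. Once $u_i$ is determined, $R_i$ and $R'_{i-1} = A_i R_i$ are the matrices displayed above the lemma, which is what was to be shown.

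The only real subtlety is matching orientations so that the relation in $\pi_1(S_i)$ really is $R'_{i-1} = A_i R_i$ rather than some cyclic or inverse variant; this is a bookkeeping matter fixed by the conventions about $\gamma_i$ and the paths from $x_i$ to the circles $\rho_{i-1}, \rho_i$ set up earlier, and poses no real obstacle. Everything else is linear algebra and is forced by the given framing conditions.
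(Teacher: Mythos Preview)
Your proposal is correct and follows essentially the same approach as the paper's own proof: identify $A_i$ from the diagonal framing hypothesis, deduce the shape of $R_i$ from the trace, determinant, and upper-right-entry-$1$ conditions (leaving only $u_i$ undetermined), and then use the relation $R'_{i-1}=A_iR_i$ together with ${\rm Tr}(R'_{i-1})=t_{i-1}$ to pin down $u_i$ via the linear equation whose unique solution is \eqref{uiform}. Your version is slightly more explicit about the product-one relation in $\pi_1(S_i)$, but the logic and the steps are the same.
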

\begin{proof}
The matrix $A_i$ is as given, by hypothesis. The matrix $R_i$ has trace $t_i$ 
and upper right entry $1$ by
hypothesis, so it too has to look as given. Now the calculation of the trace $t_{i-1}$
as a function of $u_i$ has a unique inversion: the value of $u_i$ must be given by
\eqref{uiform} as a function of $t_{i-1}$, $t_i$ and $c_i$. This determines the matrices.
\end{proof}

\begin{lemma}
Suppose $V_i$ is an $SL_2$ local system with traces $t_{i-1}$ and $t_i$
different from $2$ or $-2$, and
suppose $V_i$ is stable. Then, up to a scalar multiple, 
there is a unique frame for $V_i$ over the basepoint $x_i$ satisfying the conditions of 
the previous lemma.
\end{lemma}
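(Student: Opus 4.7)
The plan is to analyze separately what each of the two framing conditions does to the available freedom, and to use stability precisely at the one point where it is needed, namely to ensure nonvanishing of a certain matrix coefficient.

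First I would observe that the condition that the monodromy around $\gamma_i$ equals the diagonal matrix $\mathrm{diag}(c_i, c_i^{-1})$ forces the first basis vector $e_1$ to lie in the $c_i$-eigenspace $L_+\subset V_i|_{x_i}$ of $A_i$, and the second basis vector $e_2$ to lie in the $c_i^{-1}$-eigenspace $L_-$. Since $c_i\neq c_i^{-1}$ by assumption, $L_+$ and $L_-$ are distinct lines, so the set of frames satisfying the first condition is a torsor under $\Gm\times \Gm$, parametrized by the independent rescalings $(\lambda,\mu)$ applied to $e_1$ and $e_2$.

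Next I would compute how such a rescaling affects the matrix of $R_i$: conjugation by $\mathrm{diag}(\lambda,\mu)^{-1}$ multiplies the upper right entry by $\mu/\lambda$ while leaving the diagonal entries unchanged. Hence the second framing condition can be imposed if and only if the upper right entry of $R_i$ is nonzero in some (equivalently, any) such frame, and once it is imposed the remaining freedom is reduced to $\lambda=\mu$, that is, global scaling of the frame by a single scalar. This is exactly the uniqueness up to scalars asserted in the lemma.

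The key and only real step is therefore to show the upper right entry of $R_i$ cannot vanish, and this is where stability enters. If that entry were zero, then $R_i$ would preserve the line $L_-$; but $A_i$ also preserves $L_-$. The fundamental group $\pi_1(S_i,x_i)$ is generated by $\gamma_i$ and a loop around $\rho_i$ (the third generator being redundant via the product-is-trivial relation), so $L_-$ would then be a $\pi_1(S_i)$-invariant line and would extend to a rank $1$ sub-local-system $L\subset V_i$. Its monodromy around $\xi_i$ would be the eigenvalue $c_i^{-1}$ of $A_i$ on $L_-$, contradicting stability in the sense of Definition \ref{stab-def}. Existence of a frame satisfying both conditions follows from the same argument, by picking any $e_1\in L_+$, $e_2\in L_-$ and rescaling $e_2$ to normalize the now-nonzero upper right entry to $1$. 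The main (minor) obstacle is simply verifying cleanly that a $\pi_1(S_i)$-invariant line containing an $A_i$-eigenspace is determined by its invariance under $R_i$ alone; this is immediate from the generation statement above.
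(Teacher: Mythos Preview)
Your proof is correct and follows essentially the same approach as the paper's: choose eigenvectors $e_1,e_2$ for the $\gamma_i$-monodromy, argue via stability that the upper-right entry of $R_i$ in this basis is nonzero, then rescale to make it $1$ and observe that only the diagonal scalar remains. Your write-up is slightly more explicit about the torsor structure and the generation of $\pi_1(S_i)$, but the substance is identical.
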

\begin{proof}
Let $e_1$ and $e_2$ be eigenvectors for the monodromy around $\gamma _i$, with eigenvalues
$c_i$ and $c_i^{-1}$ respectively. They are uniquely determined up to a separate scalar for
each one. We claim that the upper right entry of the monodromy around $\rho _i$ is nonzero.
If it were zero, then the subspace generated by $e_2$ would be fixed, with the monodromy
around $\xi _i$ being $c_i^{-1}$; that would contradict the assumption of stability. 

Now since the upper right entry of the monodromy around $\rho _i$ is nonzero, we may
adjust the vectors $e_1$ and $e_2$ by scalars such that this entry is equal to $1$.
Once that condition is imposed, the only further allowable change of basis vectors 
is by multiplying $e_1$ and $e_2$ by the same scalar. 
\end{proof}

\begin{corollary}
\label{hyperge}
Suppose $V_i$ is a local system on $S_i$, with conjugacy class $C_i$ around $\xi _i$,
stable, and whose traces around $\rho _{i-1}$ and $\rho _i$ are
$t_{i-1}$ and $t_i$ respectively. Then there is up to a scalar an unique isomorphism  
$V_i\cong V_i(t_{i-1},t_i)$ with the system constructed above. 
\end{corollary}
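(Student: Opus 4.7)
The plan is to combine the two preceding lemmas and keep careful track of the residual scalar ambiguity in the frame. First I would invoke the second lemma: since $V_i$ is stable and the traces $t_{i-1}, t_i$ avoid $2$ and $-2$ (so the $c_i$- and $c_i^{-1}$-eigenspaces of the monodromy around $\xi_i$ are distinct lines), there exists a frame at the basepoint $x_i$, unique up to multiplying both basis vectors by a common nonzero scalar, such that the monodromy of $\gamma_i$ is diagonal with $c_i$ in the upper left entry and the monodromy around $\rho_i$ has upper right entry equal to $1$.

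Then I would feed this frame into the first lemma. That lemma says: once such a normalized frame is chosen, the three monodromy matrices are forced to equal the matrices $A_i$, $R_i$, $R'_{i-1}$ from the explicit construction of $V_i(t_{i-1}, t_i)$, with $u_i$ given by \eqref{uiform}. In particular, the framed local system $V_i$ is carried isomorphically to the framed local system $V_i(t_{i-1}, t_i)$.

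Finally, to obtain the corollary as stated, I would interpret the residual scalar freedom. Changing the frame by a common scalar $\lambda \in \Gm$ on both $e_1$ and $e_2$ does not alter any of the monodromy matrices (conjugation by $\lambda \cdot \mathrm{id}$ is trivial), so the identification $V_i \cong V_i(t_{i-1}, t_i)$ is well-defined on the underlying local systems. Any two such identifications differ by an automorphism of $V_i(t_{i-1}, t_i)$; the matrices $A_i$, $R_i$, $R'_{i-1}$ generate an irreducible representation (otherwise a common invariant line would contradict stability, by the eigenvalue argument in the proof of the second lemma), so by Schur's lemma the automorphism is a scalar. This gives the asserted uniqueness up to a scalar.

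There is no real obstacle here: both lemmas have already done the genuine work, namely writing down explicit normalized matrices and verifying that stability is exactly the hypothesis that lets one normalize the upper right entry of $R_i$ to $1$. The only thing to be careful about is not overclaiming uniqueness on the nose; the corollary only asserts uniqueness up to the central $\Gm$, which is precisely the automorphism group of any irreducible rank $2$ local system and matches the scalar freedom in the frame.
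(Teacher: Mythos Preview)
Your overall approach is right and is exactly what the paper intends: the corollary is left unproved precisely because it is the evident combination of the two lemmas. The existence argument is fine.

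There is, however, a small but genuine gap in your uniqueness argument. You write that $A_i, R_i, R'_{i-1}$ generate an irreducible representation ``otherwise a common invariant line would contradict stability''. This is not true: stability in the paper's sense (Definition~\ref{stab-def}) only forbids a rank one subsystem on which the $\xi_i$-monodromy acts by $c_i^{-1}$. A rank one subsystem on which it acts by $c_i$ is allowed, and this occurs precisely when $w_i = u_i(t_i-u_i)-1 = 0$, in which case the span of $(1,0)$ is invariant. So stable local systems can be reducible, and Schur's lemma does not apply directly.

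The fix is easy and in fact more natural than the detour through irreducibility. Uniqueness of the isomorphism up to scalar follows \emph{directly} from the second lemma: any isomorphism $\phi: V_i \to V_i(t_{i-1},t_i)$ pulls the standard frame of the target back to a frame of $V_i$ satisfying the normalization conditions, and such frames are unique up to a common scalar. Alternatively, you can check by hand that any $P\in GL_2$ commuting with $A_i$ must be diagonal (since $c_i\neq c_i^{-1}$), and then commuting with $R_i$ forces the two diagonal entries to agree because the upper right entry of $R_i$ is $1\neq 0$; so the automorphism group is scalars even in the reducible stable case.
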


Suppose $V$ is a point in $M'$, and let $t_i$ denote the traces of the monodromies
of $V$ around the loops $\rho_i$. Then by the definition of the datum $\alpha '$,
$t_i\neq 2,-2$ and
the restriction to each $S_i$ is stable, so by the corollary there
is up to scalars a unique isomorphism $h _i: V|_{S_i} \cong V_i(t_{i-1},t_i)$. 

Recall that $x_i$ is a basepoint in $S_i$, and that we have chosen a path in $S_i$
from $x_i$ to a basepoint $s_i$ in $\rho_i$, and then a path in $S_{i+1}$ from $s_i$ to $x_{i+1}$. 
Let $\psi _i$ denote composed the path from $x_i$ to $x_{i+1}$, and use the same symbol to denote
the transport along this path which is an isomorphism $\psi _i:V_{x_i}\cong V_{x_{i+1}}$. 
The stalk of the local system $V_i(t_{i-1},t_i)$ at $x_i$ is by construction $\cc^2$,
and the same at $x_{i+1}$, so the map 
$$
P_i:= h_{i+1}\psi _i h_i^{-1} : V_i(t_{i-1},t_i)_{x_i} \rightarrow V_{i+1}(t_{i},t_{i+1})_{x_{i+1}}
$$
is a matrix $P_i: \cc^2 \rightarrow \cc^2$ well-defined up to scalars, that is $P_i\in PGL_2$.

By the factorization property of $M'$, the local system $V$ is
determined by these glueing isomorphisms $P_i$, subject to the constraint that 
they should intertwine the monodromies around the circle $\rho_i$
for $V_i$ and $V_{i+1}$. We have used the notation $R'_i$ for the monodromy 
of the local system $V_{i+1}$ around the circle $\rho _i$, whereas 
$R_i$ denotes the monodromy of $V_i$ around here. We will have made sure to use the same paths
from $x_i$ or $x_{i+1}$ to the basepoint $s_i\in \rho _i$ in order to define these monodromy matrices
as were combined together to make the path $\psi _i$. Therefore, the compatibility
condition for $P_i$ says 
\begin{equation}
\label{Pcond}
R'_i \circ P_i = P_i \circ R_i .
\end{equation}
The frames for $V_{x_i}$ are only well-defined up to scalars, so the matrices $P_i$ are
only well-defined up to scalars and conversely if we change them by scalars then it doesn't change
the isomorphism class of the local system. 
Putting together all of these discussions, we obtain the following preliminary description of $M'$. 

\begin{lemma}
The moduli space $M'$ is isomorphic to the space of 
$(t_2,\ldots , t_{k-2})\in (\aaa ^1 - \{ 2,-2\} )^{k-3}$ and
$(P_2,\ldots , P_{k-2})\in (PGL_2)^{k-3}$ subject to the equations \eqref{Pcond},
where $R'_i$ and $R_i$ are given by the previous formulas in terms of the $t_j$.

For the end pieces, one should formally set 
$t_1:= c_1+c_1^{-1}$ and $t_{k-1}:= c_k + c_k^{-1}$.
\end{lemma}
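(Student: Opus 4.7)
The plan is to show that the two sides are in bijection via the natural map $V\mapsto ((t_i), (P_i))$ constructed in the discussion preceding the lemma, and to verify that this bijection respects the algebraic structure coming from the hybrid moduli stack construction.

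First I would check that the map is well-defined. Given $V\in M'$, by definition of the datum $\alpha'$ each trace $t_i={\rm Tr}(\rho_V(\rho_i))$ lies in $\aaa^1-\{2,-2\}$, and each restriction $V|_{S_i}$ is stable. Applying Corollary \ref{hyperge} to $V|_{S_i}$ yields an isomorphism $h_i:V|_{S_i}\cong V_i(t_{i-1},t_i)$, unique up to an overall scalar. The composition $P_i=h_{i+1}\psi_i h_i^{-1}$ therefore depends on $V$ only up to two scalars, one in each of $V_{x_i}$ and $V_{x_{i+1}}$, and the two resulting scalar ambiguities exactly describe the passage to $PGL_2$ (so $P_i$ is a well-defined point of $PGL_2$). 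The intertwining relation \eqref{Pcond} holds because the path $\psi_i$ passes through $s_i\in\rho_i$ and is homotopic, after transport, to the loop representing the monodromy around $\rho_i$ viewed from either side.

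Next I would construct the inverse. Given data $((t_i),(P_i))$ satisfying \eqref{Pcond}, take the local systems $V_i(t_{i-1},t_i)$ on each $S_i$ (with the convention $t_1=c_1+c_1^{-1}$, $t_{k-1}=c_k+c_k^{-1}$ for the end pieces, which ensures that $R'_1$ and $R_{k-1}$ are conjugate to the fixed end-conjugacy-class data). The compatibility \eqref{Pcond} says exactly that $P_i$ gives an isomorphism of $\pi_1(\rho_i)$-representations between the restriction of $V_i(t_{i-1},t_i)$ to $\rho_i$ and the restriction of $V_{i+1}(t_i,t_{i+1})$ to $\rho_i$. By the factorization/glueing property (Theorem \ref{factorization}, cf.\ Proposition \ref{principal}), these glueing isomorphisms assemble a local system $V$ on $S$, whose restriction to each $S_i$ is stable (because $V_i(t_{i-1},t_i)$ is stable by construction, the upper-right entry of $R_i$ being nonzero) and whose traces around $\rho_i$ are the prescribed $t_i\neq 2,-2$; hence $V\in M'$.

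Finally I would check that the two constructions are mutually inverse. Starting from $V$, the $h_i$ produced by Corollary \ref{hyperge} are (up to scalar) the tautological trivializations used in the reverse construction, so regluing via $P_i=h_{i+1}\psi_i h_i^{-1}$ reproduces $V$ up to isomorphism. Conversely, starting from $((t_i),(P_i))$ and forming $V$, the trivializations $h_i$ are forced to be the identity, so the reconstructed data are the same $((t_i),(P_i))$. The main subtlety, which I would take care to spell out, is the bookkeeping of scalars: the quotient by $GL_2$ simultaneous conjugation in the definition of $M(S;C_\cdot)$ factors through $PGL_2$ by Corollary \ref{hybquot}, so the two per-basepoint scalar ambiguities in $h_i$ precisely match the $PGL_2$-valued nature of the $P_i$; the remaining global scalar corresponds to the overall automorphism group of an irreducible local system, which by Lemma \ref{Kstab} is just the center and has already been quotiented out in the hybrid moduli stack. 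This step, rather than any calculation, is the main obstacle to writing the identification cleanly.
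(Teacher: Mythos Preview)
Your proposal is correct and follows exactly the approach that the paper intends: the paper does not give a separate proof of this lemma but rather presents it as a summary of the preceding discussion (``Putting together all of these discussions, we obtain the following preliminary description of $M'$''), and you have simply spelled out that discussion carefully, invoking Corollary~\ref{hyperge} for the uniqueness of each $V_i(t_{i-1},t_i)$, the factorization property (Theorem~\ref{factorization}, Proposition~\ref{principal}) for the glueing, and the hybrid-stack bookkeeping for the scalars.
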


At this point, we have not yet obtained a good ``Fenchel-Nielsen'' style coordinate system,
because the equation \eqref{Pcond} for $P_i$ contains $R'_i$ which depends on 
$t_{i+1}$ as well as $t_i$, and $R_i$ which depends on $t_{i-1}$ as well as $t_i$. 

We now proceed to decouple the equations. The strategy is to introduce the matrices
$$
T_i := \left( \begin{array}{cc}
0 & 1 \\
-1 & t_i
\end{array}
\right) 
$$
which serve as a canonical normal form for matrices with given traces $t_i$, not
requiring the marking of one of the two eigenvalues. Notice that if we set
$$
U_i:= 
\left( \begin{array}{cc}
1 & 0 \\
u_i & 1
\end{array}
\right) 
$$
then 
$$
U_i^{-1}T_iU_i= 
\left( \begin{array}{cc}
1 & 0 \\
-u_i & 1
\end{array}
\right) 
\left( \begin{array}{cc}
0 & 1 \\
-1 & t_i
\end{array}
\right) 
\left( \begin{array}{cc}
1 & 0 \\
u_i & 1
\end{array}
\right) 
= 
\left( \begin{array}{cc}
u_i & 1 \\
w_i & t_i-u_i
\end{array}
\right) 
$$
with $w_i$ as before. Therefore, using the formula \eqref{uiform} for $u_i$ we may
write
$$
R_i(t_{i-1}, t_i) = U_i^{-1}T_iU_i.
$$
Now
$$
R'_{i-1}= A_i R_i = A_i U_i^{-1}T_iU_i = U_i^{-1} (U_i A_i U_i^{-1} T_i )U_i .
$$
Furthermore, $U_iA_i U_i^{-1} $ is lower triangular with $c_i$ and $c_i^{-1}$ along
the diagonal, and when we multiply with $T_i$ it gives a matrix of the form
$$
U_i^{-1} A_i U_iT_i = 
\left( \begin{array}{cc}
c_i & 0 \\
\ast & c_i^{-1}
\end{array}
\right) 
\left( \begin{array}{cc}
0 & 1 \\
-1 & t_i
\end{array}
\right) 
= 
\left( \begin{array}{cc}
0 & c_i \\
-c_i^{-1} & \ast
\end{array}
\right) .
$$
However, we know that $u_i$ was chosen so that this matrix has trace $t_{i-1}$
(it is conjugate to $R'_{i-1}$), therefore in fact
$$
U_i^{-1} A_i U_iT_i =
\left( \begin{array}{cc}
0 & c_i \\
-c_i^{-1} & t_{i-1}
\end{array}
\right)
$$
as could alternately be seen by direct computation. 
By inspection this matrix is conjugate to $T_{i-1}$ as it should be from its trace.
Interestingly enough, the conjugation is by the matrix
$$
A^{\frac{1}{2}}_i:= 
\left( \begin{array}{cc}
c_i^{\frac{1}{2}} & 0 \\
0 & c_i^{-\frac{1}{2}}
\end{array}
\right) ,
$$
with
$$
U_i^{-1} A_i U_iT_i = A^{\frac{1}{2}}_iT_{i-1} A^{-\frac{1}{2}}_i .
$$
This half-power seems also to occur 
somewhere in the classical treatments of the Fenchel-Nielsen coordinates,

We obtain
$$
R'_{i-1} = U_i^{-1} (U_i A_i U_i^{-1} T_i )U_i  = 
U_i^{-1} A^{\frac{1}{2}}_iT_{i-1} A^{-\frac{1}{2}}_i U_i .
$$
Recall that the equation \eqref{Pcond} for $P_{i-1}$ reads
$$
R'_{i-1} \circ P_{i-1} = P_{i-1} \circ R_{i-1},
$$
and using the above formula for  $R'_{i-1} $ as well
as $R_{i-1} = U_{i-1}^{-1}T_{i-1}U_{i-1}$, this equation reads
\begin{equation}
\label{intermediatecond}
U_i^{-1} A^{\frac{1}{2}}_iT_{i-1} A^{-\frac{1}{2}}_i U_i \circ P_{i-1}
=P_{i-1} \circ U_{i-1}^{-1}T_{i-1}U_{i-1}.
\end{equation}
Set
$$
Q_{i-1}:= A^{-\frac{1}{2}}_i U_i  P_{i-1} U_{i-1}^{-1}.
$$
This is a simple change of variables of the matrix $P_{i-1}$, with the matrices
entering into the change of variables depending however on $t_{i-2}$, $t_{i-1}$ and $t_i$. 
Notice that the coefficients of $Q_{i-1}$ are linear functions of the coefficients of 
$P_{i-1}$, in particular the action of scalars is the same on both. 

Our equation which was previously \eqref{Pcond} (but for $i-1$ instead of $i$), 
has become \eqref{intermediatecond}
which, after multiplying on the left by $U_i$ then by $A^{-\frac{1}{2}}_i$ and on the
right by $U_{i-1}^{-1}$ and substituting $Q_{i-1}$, 
becomes:
\begin{equation}
\label{Qcond}
T_{i-1}\circ Q_{i-1} = Q_{i-1} T_{i-1}.
\end{equation}
A sequence of matrices $Q_i$ satisfying these equations leads back to a sequence of matrices
$P_i$ satisfying \eqref{Pcond}  and vice-versa. Recall that the glueing for the local system
depended on these matrices modulo scalars, that is to say in $PGL_2$. We may sum up with the
following proposition:

\begin{proposition}
\label{feniprelim}
The moduli space $M'$ is isomorphic to the space of  
choices of 
$$
(t_2,\ldots , t_{k-2})\in (\aaa ^1 - \{ 2,-2\} )^{k-3}\mbox{  and  }
(Q_2,\ldots , Q_{k-2})\in (PSL_2)^{k-3}
$$
subject to the equations $T_iQ_i=Q_iT_i$. 
\end{proposition}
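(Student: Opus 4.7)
The plan is to start from the previous lemma, which already expresses $M'$ as the space of tuples $(t_2,\ldots,t_{k-2};P_2,\ldots,P_{k-2})$ with $t_i\in\aaa^1-\{2,-2\}$ and $P_i\in PGL_2$ subject to the intertwining relations \eqref{Pcond}. The entire task is then to turn the substitution $Q_{i-1}:=A_i^{-\frac{1}{2}}U_iP_{i-1}U_{i-1}^{-1}$ that has been computed just before the proposition into an honest isomorphism of varieties, compatible with the equations.

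First I would observe that the substitution is well defined on the quotient $PGL_2$. The matrix $U_i$ is a fixed element of $SL_2$ once $t_{i-1},t_i$ are given (the formula \eqref{uiform} for $u_i$ has nonzero denominator because $c_i\neq c_i^{-1}$), and $A_i^{\frac{1}{2}}=\mathrm{diag}(c_i^{\frac{1}{2}},c_i^{-\frac{1}{2}})$ is well defined modulo $\pm I$, hence unambiguously an element of $PGL_2=PSL_2$. For each fixed value of $(t_{i-2},t_{i-1},t_i)$, the map $P_{i-1}\mapsto Q_{i-1}$ is left and right multiplication by invertible elements of $PGL_2$, hence an isomorphism $PGL_2\cong PGL_2$; varying the $t_\bullet$, it assembles into an automorphism of the trivial $PGL_2$-bundle over $(\aaa^1-\{2,-2\})^{k-3}$. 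Moreover this substitution respects the central $\Gm$-action on representatives, so it descends correctly to $PGL_2$.

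Second, I would transport the equation \eqref{Pcond} across this substitution. Using the two identities $R_i=U_i^{-1}T_iU_i$ and $R'_{i-1}=U_i^{-1}A_i^{\frac{1}{2}}T_{i-1}A_i^{-\frac{1}{2}}U_i$ that were established in the preceding computation, the relation $R'_{i-1}P_{i-1}=P_{i-1}R_{i-1}$ becomes, after multiplying on the left by $A_i^{-\frac{1}{2}}U_i$ and on the right by $U_{i-1}^{-1}$, the clean relation $T_{i-1}Q_{i-1}=Q_{i-1}T_{i-1}$. The argument is reversible: given a solution $(t_\bullet;Q_\bullet)$ to \eqref{Qcond}, the inverse substitution $P_{i-1}=U_i^{-1}A_i^{\frac{1}{2}}Q_{i-1}U_{i-1}$ produces a solution to \eqref{Pcond}, and the two assignments are mutually inverse morphisms of schemes. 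This exhibits the bijection of sets of solutions as an isomorphism of varieties.

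The main obstacle is really bookkeeping rather than mathematical: making sure the formulas make sense globally over $(\aaa^1-\{2,-2\})^{k-3}$ and that the substitution descends to the projective quotients. The denominators arising in $u_i$ and in $A_i^{\pm\frac{1}{2}}$ are invertible on the base, and the ambiguity $A_i^{\frac{1}{2}}\leftrightarrow -A_i^{\frac{1}{2}}$ is killed in $PGL_2$, so these issues do not cause trouble. Once this is verified, the identification $PGL_2=PSL_2$ gives the statement in the form claimed in Proposition \ref{feniprelim}.
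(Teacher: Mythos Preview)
Your proposal is correct and follows exactly the approach of the paper: the proposition is a summary of the computation immediately preceding it, and your proof is just that computation packaged as an explicit change-of-variables isomorphism, with the added care of checking that the substitution is well defined in $PGL_2$ (the $\pm$ ambiguity in $A_i^{1/2}$) and algebraic over the base. There is nothing to add.
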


This expression for the  moduli space is now decoupled, furthermore the equations are
in a nice and simple form. 

\begin{theorem}
\label{fenchelnielsen}
Let ${\bf Q}$ be the space of pairs $(t,[p:q]) \in \aaa^1\times \pp^1$ such that
$t\neq 2,-2$ and 
\begin{equation}
\label{Qdef}
p^2+tpq + q^2 \neq 0.
\end{equation} 
Then we have
$$
M'\cong {\bf Q}^{k-3}.
$$
\end{theorem}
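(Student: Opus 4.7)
The plan is to build directly on Proposition \ref{feniprelim}, which has already exhibited $M'$ as a product indexed by $i = 2, \ldots, k-2$ of spaces of pairs $(t_i, Q_i)$ with $t_i \in \aaa^1 - \{2,-2\}$ and $Q_i \in PSL_2$ satisfying $T_i Q_i = Q_i T_i$. Since the equations for distinct indices are entirely decoupled, the whole theorem reduces to identifying one factor with the space ${\bf Q}$, and then taking a $(k-3)$-fold product.

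Fixing an index and dropping subscripts, the first step is to compute the centralizer of $T$ in the $2 \times 2$ matrix algebra. Writing $Q$ in terms of its matrix entries $a, b, c, d$ and expanding the equation $TQ = QT$ entrywise, I expect the four resulting scalar equations to collapse to the two independent linear conditions $c = -b$ and $d = a + tb$. Consequently, the commuting matrices form the two-dimensional subalgebra spanned by $I$ and $T$, parametrized by $(a,b)$ via
\[
Q(a,b) = \begin{pmatrix} a & b \\ -b & a + tb \end{pmatrix},
\]
whose determinant is $a^2 + tab + b^2$. This matches the Cayley--Hamilton identity $T^2 = tT - I$ and reflects the classical fact that, for $t \neq \pm 2$, the centralizer of a semisimple element of $SL_2$ is its maximal torus.

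Next I would pass to the projectivization: over $\cc$ the natural map $SL_2 \to PGL_2$ realizes $PSL_2 = PGL_2$, so modding out by scalars sends $(a, b)$ to $[a : b] \in \pp^1$, and the invertibility of $Q(a,b)$ becomes the open condition $a^2 + tab + b^2 \neq 0$. After renaming $(a, b)$ as $(p, q)$, the parameter space of valid pairs $(t, [p:q])$ is exactly ${\bf Q}$, and assembling over the $k-3$ indices yields $M' \cong {\bf Q}^{k-3}$.

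I do not anticipate any major obstacle. The heart of the argument is the centralizer computation, which is a direct linear-algebra calculation, and the identification with ${\bf Q}$ is then just a matter of matching the non-vanishing condition. The one point to verify with care is that the identification is an algebraic isomorphism in families (not merely a bijection on $\cc$-points), but since all the equations involved are polynomial in $(t, a, b)$ and the condition $a^2 + tab + b^2 \neq 0$ cuts out a Zariski open subset of the total family, this is routine.
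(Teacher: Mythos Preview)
Your proposal is correct and follows essentially the same route as the paper: starting from Proposition \ref{feniprelim}, you compute the centralizer of $T$ entrywise, obtain the two linear constraints $c=-b$ and $d=a+tb$, read off the determinant $p^2+tpq+q^2$, and projectivize. The only differences are cosmetic (variable names, and your added remarks about Cayley--Hamilton and the algebraicity in families), so there is nothing further to add.
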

\begin{proof}
This will follow from the previous proposition, once we calculate that the
space of matrices $Q_i$ in $PGL_2$ commuting with $T_i$, is equal to the
space of points $[p,q]\in \pp^1$ such that $p^2+t_ipq + q^2 \neq 0$.
Write
$$
Q_i = \left( \begin{array}{cc}
p & q \\
p' & q'
\end{array}
\right) 
$$
then 
$$
Q_iT_i = 
\left( \begin{array}{cc}
p & q \\
p' & q'
\end{array}
\right)
\left( \begin{array}{cc}
0 & 1 \\
-1 & t_i
\end{array}
\right)
=
\left( \begin{array}{cc}
-q & p+t_iq \\
-q' & p' + t_i q'
\end{array}
\right)
$$
whereas
$$
T_iQ_i = 
\left( \begin{array}{cc}
0 & 1 \\
-1 & t_i
\end{array}
\right)
\left( \begin{array}{cc}
p & q \\
p' & q'
\end{array}
\right)
=
\left( \begin{array}{cc}
p' & q' \\
t_ip'-p & t_i q'-q
\end{array}
\right) .
$$
The equation $Q_iT_i=T_iQ_i$ thus gives from the top row
$$
p'=-q,\;\;\;\; q' = p+t_iq
$$
and then, those actually make the other two equations hold automatically. 
Therefore a solution $Q_i$ may be written
$$
Q_i = \left( \begin{array}{cc}
p & q \\
-q & p+t_iq
\end{array}
\right) .
$$
The statement $Q_i\in PGL_2$ means that $Q_i$ is taken up to multiplication by scalars,
in other words $[p:q]$ is a point in $\pp^1$ (clearly those coordinates are not both zero);
and 
$$
{\rm det}(Q_i) = p^2 + t_i pq + q^2 \neq 0.
$$
We conclude that the space of $(t_i,Q_i)\in (\aaa^1- \{ 2,-2\}) \times PGL_2$
such that $T_iQ_i=Q_iT_i$ is isomorphic to ${\bf Q}$. Therefore
Proposition \ref{feniprelim} now says $M'\cong {\bf Q}^{k-3}$. 
\end{proof}

\begin{lemma}
\label{ddQ}
The dual boundary complex of ${\bf Q}$ is 
$$
\dualdel {\bf Q} \sim S^1.
$$
Therefore 
$$
\dualdel {\bf Q}^{k-3} \sim S^{2(k-3)-1}.
$$
\end{lemma}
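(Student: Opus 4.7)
The plan is to construct an explicit very simple normal crossings compactification of $\mathbf{Q}$ and read off its dual complex as a graph. I embed $\mathbf{Q}$ in $\pp^1_t\times\pp^1_{[p:q]}$ as the complement of $D_1\cup D_2\cup D_\infty\cup B$, where $D_1=\{t=2\}$, $D_2=\{t=-2\}$, $D_\infty=\{t=\infty\}$, and $B$ is the projective closure of $\{p^2+tpq+q^2=0\}$---a smooth rational curve of bi-degree $(1,2)$, namely the double cover of $\pp^1_t$ branched over $t=\pm 2$. The irreducibility of $B$ is precisely where the twisting of the underlying $\Gm$-bundle over $U=\aaa^1-\{2,-2\}$ is encoded: an untwisted product $U\times\Gm$ would give two disjoint sections instead. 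The incidences are then easy to compute: $B\cap D_1=\{[-1{:}1]\}$ is a simple tangency since $p^2+2pq+q^2=(p+q)^2$, $B\cap D_2=\{[1{:}1]\}$ is another simple tangency, and $B$ meets $D_\infty$ transversely at the two points $[0{:}1]$ and $[1{:}0]$ (from the homogenization $t_0(p^2+q^2)+t_1pq=0$).

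Next I resolve to very simple normal crossings. A local analysis of a simple tangency of two smooth curves (model $\{u=0\}\cap\{u=x^2\}$) shows that one blowup leaves a triple point on the exceptional where both strict transforms meet, and a second blowup at this triple point yields a ``star'': a central exceptional $E_i^{(2)}$ meeting $\tilde D_i$, $\tilde B$, and the earlier exceptional $E_i^{(1)}$ transversely at three distinct points, with $\tilde D_i$ and $\tilde B$ now disjoint (the intersection number drops $2\to 1\to 0$). I apply this at each of the two tangencies. At $t=\infty$, the two transverse points of $B\cap D_\infty$ violate the connected-intersection requirement, so I blow up one of them, producing an exceptional $E_\infty$ that meets $\tilde D_\infty$ and $\tilde B$ at distinct points and converts the configuration at infinity into the triangle $\tilde D_\infty$--$\tilde B$--$E_\infty$. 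A final check confirms no triple points survive anywhere.

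The resulting dual complex has the nine components $D_1,D_2,D_\infty,B,E_1^{(1)},E_1^{(2)},E_2^{(1)},E_2^{(2)},E_\infty$ as vertices and nine edges: three edges per tangency forming a ``Y'' attached to $B$ via the central $E_i^{(2)}$, plus the triangle $\tilde D_\infty$--$\tilde B$--$E_\infty$. It is a connected graph with $\chi=9-9=0$ that retracts onto its unique cycle (the triangle), so $\dualdel\mathbf{Q}\sim S^1$. For the product, Lemma~\ref{join} gives $\dualdel(\mathbf{Q}^{k-3})\sim(S^1)^{*(k-3)}$, and the join identity $S^a*S^b\sim S^{a+b+1}$ iterates to $S^{2(k-3)-1}$. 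The main obstacle is the careful local analysis of the tangency resolutions---verifying that exactly two blowups produce the star, that the strict transforms really become disjoint, and that no spurious triple points appear elsewhere after all the blowups; after that the combinatorics are routine bookkeeping.
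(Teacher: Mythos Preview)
Your proof is correct. The paper actually mentions your approach as an alternative (``That may also be seen directly by blowing up two times over each ramification point of the conic lying over $t=\pm 2$''), but its primary argument is slicker: it first computes $\dualdel\Phi$ for the larger open set $\Phi\subset\pp^1\times\aaa^1$ defined only by the inequation \eqref{Qdef}, where the boundary in $\pp^1\times\pp^1$ is just $D_\infty\cup B$ meeting transversally in two points---a (non-simple) normal-crossings graph with two vertices and two edges, hence $S^1$---and then passes to ${\bf Q}$ by observing that $\Phi\setminus{\bf Q}$ is a disjoint union of two copies of $\aaa^1$ (the fibers over $t=\pm 2$, each a $\pp^1$ minus its single tangency point with $B$), which are negligeable by Corollary~\ref{affine} and Lemma~\ref{negligeable}. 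Your direct resolution avoids invoking those reduction lemmas at the cost of tracking nine divisors instead of two; the paper's route buys brevity but leans on the non-simple normal-crossings convention and on the earlier negligeability machinery.
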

\begin{proof}
Let $\Phi \subset \pp^1\times \aaa^1$ be the open subset defined by the same inequation
\eqref{Qdef}. Then ${\bf Q}\subset \Phi$ is an open subset, whose complement is the disjoint union
of two affine lines. Furthermore, $\overline{\Phi}:= \pp^1\times \pp^1$ is a 
(non simple) normal crossings
compactification of $\Phi$. The divisor at infinity is the union of two copies of $\pp^1$,
namely the fiber over $t=\infty$ and the conic defined by $p^2+tpq+q^2=0$. These intersect
transversally in two points. Therefore, the incidence complex of $\Phi\subset \overline{\Phi}$
at infinity is a graph with
two vertices and two edges joining them. 

It follows that the incidence complex at infinity for ${\bf Q}$ is a circle. That may also be seen directly
by blowing up two times over each ramification point of the conic lying over $t=\pm 2$. 

Now applying Lemma \ref{join} successively, and noting that the successive join of $k-3$ times 
the circle is $S^{2(k-3)-1}$, we obtain the second statement. 
\end{proof}

\begin{corollary}
\label{cor-mainthm}
Let $C_{\cdot}$ be a collection of conjugacy classes satisfying Condition \ref{verygen}.
Then the moduli space 
${\rm M}_B(S; C_1,\ldots , C_k)$ of rank
$2$ local systems with those prescribed conjugacy classes, has dual boundary
complex homotopy equivalent to a sphere
$$
\dualdel {\rm M}_B(S; C_1,\ldots , C_k) \sim S^{2(k-3)-1}.
$$
\end{corollary}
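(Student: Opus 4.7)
The plan is to assemble the chain of homotopy equivalences built up in the preceding sections. Since Condition \ref{verygen} entails Condition \ref{Kgen}, Proposition \ref{variety} identifies the hybrid moduli stack $M(S; C_{\cdot})$ with the smooth irreducible affine character variety ${\rm M}_B(S; C_1,\ldots,C_k)$, so it suffices to compute $\dualdel M(S; C_{\cdot})$.

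First I would invoke Corollary \ref{cor92}, which states that the inclusion of the principal open stratum $M' \subset M(S; C_{\cdot})$ induces a homotopy equivalence $\dualdel M(S; C_{\cdot}) \sim \dualdel M'$. Recall that $M'$ is cut out by requiring each trace $t_i \neq 2, -2$ and each restriction $V|_{S_i}$ to be stable. This equivalence is the output of the stratification constructed in Section \ref{sec-red}, wherein every non-principal stratum has been shown to carry an $\aaa^1$-factor (Sections \ref{sec-splitting} and \ref{sec-decomp-unstable}), so that Proposition \ref{decomp} applies.

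Next I would apply Theorem \ref{fenchelnielsen}, which provides the Fenchel-Nielsen product decomposition $M' \cong {\bf Q}^{k-3}$, where ${\bf Q}$ is the variety of pairs $(t, [p:q]) \in (\aaa^1 - \{2,-2\}) \times \pp^1$ satisfying $p^2 + tpq + q^2 \neq 0$. Combined with Lemma \ref{ddQ}, which computes $\dualdel {\bf Q} \sim S^1$ by inspection of a compactification inside $\pp^1 \times \pp^1$, and with the iterated join formula of Lemma \ref{join} identifying $\dualdel ({\bf Q}^{k-3})$ with the $(k-3)$-fold join of circles, one obtains $\dualdel M' \sim S^{2(k-3)-1}$. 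Chaining the three homotopy equivalences yields the corollary.

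There is no new obstacle at this concluding assembly; the essential difficulties have already been overcome in the preceding sections. The main hurdles lay in securing the affine-line factor structure of the unstable and trace-$\pm 2$ strata so that Proposition \ref{decomp} could be applied (this was delicate because the etale triviality produced by framings had to be upgraded to Zariski-local triviality via Lemma \ref{lem62}), and in pushing the naive Fenchel-Nielsen glueing through the matrix manipulation introducing the half-power $A_i^{1/2}$, so that the equations for the glueing parameters decoupled and the twisted conic structure of ${\bf Q}$ emerged and produced a circle rather than a contractible boundary complex.
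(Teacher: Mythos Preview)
Your proposal is correct and follows exactly the same route as the paper's own proof: identify ${\rm M}_B$ with the hybrid stack via Proposition \ref{variety}, pass to $M'$ via Corollary \ref{cor92}, decompose $M'\cong {\bf Q}^{k-3}$ via Theorem \ref{fenchelnielsen}, and compute $\dualdel {\bf Q}^{k-3}\sim S^{2(k-3)-1}$ via Lemma \ref{ddQ}. Your added commentary on where the difficulties lay is accurate but not part of the proof proper.
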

\begin{proof}
We have been working with the hybrid moduli stack $M(S;C_{\cdot})$ above, but 
Proposition \ref{variety} says that this is the same as the moduli space 
${\rm M}_B(S; C_1,\ldots , C_k)$. By Corollary \ref{cor92}, 
$\dualdel M(S;C_{\cdot})\sim \dualdel M'$. By Theorem \ref{fenchelnielsen},
$M'\cong {\bf Q}^{k-3}$, and by Lemma \ref{ddQ} $\dualdel {\bf Q}^{k-3}\sim 
S^{2(k-3)-1}$. Putting these all together we obtain the desired conclusion. 
\end{proof}

This completes the proof of Theorem \ref{main}. 

\begin{remark}
The space $\Phi ^{k-3}$ itself has a modular interpretation: it is $M^{\alpha}(S,C_{\cdot})$
for $\alpha$ given by setting all $\sigma _i$ to $\{ 1\}$ (requiring stability of each $V|_{S_i}$),
but having $G_i=GL_2$ for all $i$, that is no longer constraining the traces. 
\end{remark}
 
\section{A geometric $P=W$ conjecture}
\label{rel-hitch}

In this section we discuss briefly the relationship between the theorem proven above,
and the Hitchin fibration. For this discussion, let us suppose that the eigenvalues
$c_i$ are $n_i$-th roots of unity, so the conjugacy classes $C_i$ have finite order $n_i$.
Fix points $y_1,\ldots , y_k\in \pp^1$ and 
let 
$$
X:= \pp^1[ \frac{1}{n_1}y_1, \ldots , \frac{1}{n_k}y_k]
$$
be the {\em root stack} with denominators $n_i$ at the points $y_i$ respectively. It is
a smooth proper Deligne-Mumford stack. The fundamental group of its topological
realization \cite{Noohi} is generated by the
paths $\gamma _1,\ldots , \gamma _k$ subject to the relations that $\gamma _1\cdots \gamma _k=1$ and
$\gamma _i^{n_i}=1$. We may also let $S$ be a punctured sphere such as considered above, 
the complement of a collection of small discs in
$\pp ^1$ centered at the points $y_i$. Therefore, a local system on $X^{\rm top}$ is the same thing
as a local system on $S$ such that the monodromies around the boundary loops $\xi _i$ have order $n_i$ respectively. We have
$$
{\rm M}_B(X^{\rm top}, GL_r) = \coprod _{(C_1,\ldots , C_k)} {\rm M}_B(S, C_{\cdot})
$$
where the disjoint union runs over the sequences of conjugacy classes such that $C_i$ has order
$n_i$. 
Recall that if we assume that $C_{\cdot}$ satisfies the Kostov-genericity condition
then the
character variety with fixed conjugacy classes ${\rm M}_B(S, C_{\cdot})$
is the same as the hybrid moduli stack $M(S, C_{\cdot})$.
It may be seen as a connected component of the character variety ${\rm M}_B(X^{\rm top},GL_r)$. 

Now we recall that there is a homeomorphism between the character variety 
${\rm M}_B(X^{\rm top},GL_r)$ and the Hitchin-Nitsure moduli space 
${\rm M}_{Dol}(X^{\rm top},GL_r)$ of Higgs bundles. One may consult for example
\cite{lspavm}, \cite{Konno}, \cite{Nakajima} for the general theory in the open or orbifold setting. 
We denote by ${\rm M}_{Dol}(S, C_{\cdot})$ the connected component of
${\rm M}_{Dol}(X^{\rm top},GL_r)$ corresponding to the choice of conjugacy classes, which it may be
recalled corresponds to fixing appropriate parabolic weights for the parabolic Higgs bundles. 
Hitchin's equations give a homeomorphism , the ``nonabelian Hodge correspondence''
\begin{equation}
\label{nahc}
{\rm M}_{Dol}(S, C_{\cdot})^{\rm top}\cong {\rm M}_B(S, C_{\cdot})^{\rm top}.
\end{equation}
Recall that the resulting two complex structures on the same underlying moduli space,
form a part of a hyperk\"ahler triple \cite{Hitchin}. 

In the smooth proper orbifold setting we have the same theory  of the Hitchin map 
$$
{\rm M}_{Dol}(S, C_{\cdot}) \rightarrow \aaa^n
$$
which is a Lagrangian fibration to the space of integrals of Hitchin's Hamiltonian system
\cite{HitchinDuke}. In particular, $n$ is one-half of the complex dimension of the moduli
space, that dimension being even because of the hyperk\"ahler structure. 

Fix a neighborhood of infinity in the Hitchin base ${\mathbb B}^{\ast}\subset \aaa ^n$,
and let $N^{\ast}_{Dol}$ denote its preimage in ${\rm M}_{Dol}(S, C_{\cdot})$. Similarly,
let $N^{\ast}_B$ denote a neighborhood of infinity in ${\rm M}_B(S, C_{\cdot})$.
The homeomorphism \ref{nahc} gives a natural homotopy equivalence
$N^{\ast}_{Dol}\sim N^{\ast}_B$. 

The neighborhood at infinity ${\mathbb B}^{\ast}\subset \aaa ^n$ has the homotopy
type of the sphere $S^{2n-1}$, and indeed we may view 
$S^{2n-1}$ as the quotient of ${\mathbb B}^{\ast}$ by radial scaling, 
so the Hitchin map provides a natural map 
$$
N^{\ast}_{Dol}\rightarrow S^{2n-1}.
$$

On the other hand, there is a natural projection $N^{\ast}_B\rightarrow \dualdel {\rm M}_B(S, C_{\cdot})$.
This is a general phenomenon, indeed if we have chosen a very simple normal crossings compactification
with divisor components $D_1,\ldots , D_m$ then we may choose an open covering of 
$N^{\ast}_B$ by open subsets $U_1,\ldots , U_m$ punctured neighborhoods of the $D_i$, such that
$U_{i_1}\cap \cdots \cap U_{i_r}$ is nonempty if and only if 
$D_{i_1}\cap \cdots \cap D_{i_r}$ is nonempty. Then, any partition of unity for this covering
provides a map $N^{\ast}_B\rightarrow \rr^m$ which just goes into the subspace 
$\dualdel {\rm M}_B(S, C_{\cdot})$. 

Recall the following conjecture \cite{KNPS}, which was motivated by consideration of
the case $\pp ^1-\{ y_1,y_2,y_3,y_4\}$.  

\begin{conjecture}
\label{geopw}
There is a homotopy-commutative square
$$
\begin{array}{ccc}
N^{\ast}_{Dol}& \stackrel{\sim}{\rightarrow} & N^{\ast}_B\\
\downarrow & & \downarrow \\
S^{2n-1}& \stackrel{\sim}{\rightarrow} & \dualdel {\rm M}_B(S, C_{\cdot})
\end{array}
$$
where the top and side maps are those described above, such that the bottom map is a
homotopy equivalence. 
\end{conjecture}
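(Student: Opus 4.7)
My plan is to prove the conjecture by matching the Fenchel--Nielsen coordinates on the Betti side, established in Section \ref{sec-main}, with a pair-of-pants decomposition of the Hitchin base on the Dolbeault side. The top arrow is a homeomorphism by nonabelian Hodge, and by Theorem \ref{main} combined with the standard deformation retraction from a punctured tubular neighborhood of a very-simple-normal-crossings divisor onto its dual complex, the right arrow is already a homotopy equivalence. So the actual work lies in pinning down the left vertical map and identifying the induced bottom map. A preliminary check of dimensions is immediate: for $SL_2$ local systems on the $k$-punctured sphere, the Hitchin base of parabolic quadratic differentials with prescribed polar parts has complex dimension $n = k-3$, so $S^{2n-1}$ matches $S^{2(k-3)-1}$ as needed.

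Next I would invoke the WKB / spectral-network picture of Gaiotto--Moore--Neitzke together with the explicit comparison of Hollands--Neitzke \cite{HollandsNeitzke} between Fock--Goncharov coordinates attached to a spectral network and Fenchel--Nielsen-type coordinates. For a generic direction in the Hitchin base the spectral network is dual to a pair-of-pants decomposition of $S$; choosing it to coincide with the decomposition used here, the associated cluster variables should, after taking $\log|\cdot|$ and rescaling, yield a map from the sphere at infinity in the Hitchin base to the tropicalization of $M'\cong {\bf Q}^{k-3}$. By Lemma \ref{join} that tropicalization is the join of $k-3$ copies of $\dualdel {\bf Q}$, which by Lemma \ref{ddQ} is a join of circles equivalent to $S^{2(k-3)-1}$. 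The bottom map would then be obtained factor-by-factor, and the homotopy-commutativity of the square would follow from the compatibility of spectral networks with the pair-of-pants glueing in Theorem \ref{factorization}.

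The main obstacle is the factor-by-factor matching. For each factor ${\bf Q}$ the circle $\dualdel {\bf Q}$ arises, in the proof of Lemma \ref{ddQ}, as the dual complex of the divisor ``fiber over $t=\infty$ plus conic $p^2+tpq+q^2=0$'' inside $\pp^1\times\pp^1$; the key combinatorial feature is that the two ends of the fiber over $t=\infty$ get permuted as $t$ loops around the ramification points $t=\pm 2$, making $\dualdel {\bf Q}$ a circle rather than an interval. One needs to show that this twisting matches precisely the monodromy of the two-sheeted spectral cover over an annulus around $\rho_i$ as one varies the direction in the Hitchin base through the locus where the quadratic differential acquires a double zero on that annulus. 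Making this analytic, rather than merely heuristic, appears to require either a refined convergence statement for the nonabelian Hodge correspondence near infinity, or the intrinsic equivariant framework for $\dualdel$ under the $\Gm$-scaling on $M_{Dol}$ suggested by Payne and mentioned in Section \ref{sec-dualdel}. I would begin with the case $k=4$, where ${\bf Q}$ is essentially the full moduli space and the statement can be verified directly against the Fricke--Klein formula for the Cayley cubic, before attempting the inductive/factorization step that upgrades the single-factor comparison to the full join.
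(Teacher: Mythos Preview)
The statement you are addressing is a \emph{conjecture}, and the paper does not prove it. Immediately after stating Conjecture~\ref{geopw}, the author writes: ``Our main theorem provides a homotopy equivalence such as the one which is conjectured to exist on the bottom of the square\ldots\ We haven't shown anything about commutativity of the diagram.'' So there is no ``paper's own proof'' to compare against; the paper only establishes that $\dualdel {\rm M}_B(S,C_{\cdot})\sim S^{2(k-3)-1}$, which is merely consistent with the existence of the conjectured bottom equivalence. Your proposal is therefore not a proof to be checked but a research outline toward an open problem, and indeed you yourself flag the main analytic obstacle as unresolved.

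Beyond that, there is a concrete error in your first paragraph: the right vertical map $N^{\ast}_B \to \dualdel {\rm M}_B(S,C_{\cdot})$ is \emph{not} a homotopy equivalence, and Theorem~\ref{main} does not make it one. A punctured tubular neighborhood of a simple normal crossings divisor $D\subset\overline{X}$ has the homotopy type of the link of $D$, a real $(2n-1)$-manifold carrying, in general, far more topology than the dual complex ${\mathbb D}(D)$; the map to the dual complex collapses each divisor-stratum piece and is typically very far from an equivalence. (Concretely, $N^{\ast}_B$ deformation-retracts onto something containing the full cohomology of the moduli space in high degrees, not just that of a sphere.) Since the square does not have three sides that are equivalences, your strategy of ``the actual work lies in pinning down the left vertical map and identifying the induced bottom map'' does not reduce the conjecture in the way you suggest: even after constructing a candidate bottom map via WKB/spectral networks, establishing homotopy-commutativity of the square is a genuinely separate analytic problem, essentially the one identified in \cite{KNPS,KNPS2} and \cite{DDW}, and not a formal consequence of the other arrows.
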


Our main theorem provides a homotopy equivalence such as the one which is conjectured to
exist on the bottom of the square, for the group $GL_2$ on $\pp^1-\{ y_1,\ldots , y_k\}$. 
This was our motivation, and it was also the motivation for Komyo's proof in the case $k=5$ 
\cite{Komyo}. 

We haven't shown anything about commutativity of the diagram. This is one of the motivations for
looking at the geometric theory of harmonic maps to buildings developed in \cite{KNPS} \cite{KNPS2}. 
A result in this direction is shown by Daskalopoulos, Dostoglou and Wentworth \cite{DDW}.
The Kontsevich-Soibelman wallcrossing picture 
\cite{KontsevichSoibelmanWallcrossing}
should provide a global framework for this question. 

Conjecture \ref{geopw} may be viewed as a geometrical analogue of the first weight-graded piece of the
$P=W$ conjecture 
\cite{deCataldoHauselMigliorini}
\cite{Hausel}. That conjecture states that weight filtration $W$
of the mixed Hodge structure on the
cohomology of the character variety $M_B$ should be naturally identified with the 
perverse Leray filtration $P$ induced by the Hitchin fibration. 
For the case of rank two character varieties on a compact Riemann surface, it 
was in fact proved by de Cataldo, Hausel and Migliorini \cite{deCataldoHauselMigliorini}. 
Davison treats a twisted version \cite{Davison}. 

It is known \cite{Payne} that the cohomology of the dual boundary
complex is the first weight-graded piece of the cohomology of $M_B$. Conjecture 
\ref{geopw} states that this should come from the cohomology of the sphere at infinity in the
Hitchin fibration, which looks very much like a Leray piece. 

Furthermore, indeed from discussions with L. Migliorini and S. Payne
it seems to be the case that the characterization of the cohomology of the dual boundary
complex in \cite{Payne}, and the computations 
\cite{HauselThaddeus1} \cite{HauselThaddeus2} \cite{HauselLetellierRodriguez}
\cite{HauselLetellierRodriguez2}
of the cohomology ring of 
${\rm M}_{Dol}$ used to prove the $P=W$ conjecture for $SL_2$
in \cite{deCataldoHauselMigliorini}, should serve to show commutativity of the diagram in rational
cohomology. 

The question of proving the analogue of our Theorem \ref{main} for a compact Riemann surface,
even in the rank $2$ case, is an interesting problem for further study. One may also envision
the case of a punctured curve of higher genus. The techniques used here involved a choice of
stability condition on each of the pieces of the decomposition, which in the higher genus
case would require having at least a certain number of punctures. Weitsman suggests,
following \cite{Weitsman} and \cite{JeffreyWeitsman}, 
that it might be possible to obtain a similar argument with only
at least one puncture. The compact case would seem to be more difficult to handle. 

Let us note that Kabaya \cite{Kabaya} gives a general discussion of coordinate systems
which can be obtained using decompositions, and he treats the problems of indeterminacy of
choices of eigenspaces up to permutations. 

The other direction which needs to be considered is local systems of higher rank.
Here, the first essential case is $\pp^1-\{ 0,1,\infty \}$, where there is no useful decomposition
of the surface into simpler pieces. We could hope that if this basic case could be treated in all ranks,
then the reduction techniques we have used above could allow for an extension to the case of
many punctures.

\bibliographystyle{amsplain}

\end{document}